  \let\oldparagraph\paragraph
  \renewcommand{\paragraph}{
    \@ifstar
      \xxxParagraphStar
      \xxxParagraphNoStar
  }
  \newcommand{\xxxParagraphStar}[1]{\oldparagraph*{#1}\mbox{}}
  \newcommand{\xxxParagraphNoStar}[1]{\oldparagraph{#1}\mbox{}}
  \let\oldsubparagraph\subparagraph
  \renewcommand{\subparagraph}{
    \@ifstar
      \xxxSubParagraphStar
      \xxxSubParagraphNoStar
  }
  \newcommand{\xxxSubParagraphStar}[1]{\oldsubparagraph*{#1}\mbox{}}
  \newcommand{\xxxSubParagraphNoStar}[1]{\oldsubparagraph{#1}\mbox{}}
\newenvironment{Shaded}{}{}
\newcommand{\AttributeTok}[1]{\textcolor[rgb]{0.84,0.23,0.29}{#1}}
\newcommand{\ConstantTok}[1]{\textcolor[rgb]{0.00,0.36,0.77}{#1}}
\newcommand{\DecValTok}[1]{\textcolor[rgb]{0.00,0.36,0.77}{#1}}
\newcommand{\FloatTok}[1]{\textcolor[rgb]{0.00,0.36,0.77}{#1}}
\newcommand{\FunctionTok}[1]{\textcolor[rgb]{0.44,0.26,0.76}{#1}}
\newcommand{\NormalTok}[1]{\textcolor[rgb]{0.14,0.16,0.18}{#1}}
\newcommand{\OtherTok}[1]{\textcolor[rgb]{0.44,0.26,0.76}{#1}}
\newcommand{\SpecialCharTok}[1]{\textcolor[rgb]{0.00,0.36,0.77}{#1}}
\patchcmd\longtable{\par}{\if@noskipsec\mbox{}\fi\par}{}{}
\def\maxwidth{\ifdim\Gin@nat@width>\linewidth\linewidth\else\Gin@nat@width\fi}
\def\maxheight{\ifdim\Gin@nat@height>\textheight\textheight\else\Gin@nat@height\fi}
\def\fps@figure{htbp}
\definecolor{quarto-callout-color}{HTML}{909090}
\definecolor{quarto-callout-note-color}{HTML}{0758E5}
\definecolor{quarto-callout-important-color}{HTML}{CC1914}
\definecolor{quarto-callout-warning-color}{HTML}{EB9113}
\definecolor{quarto-callout-tip-color}{HTML}{00A047}
\definecolor{quarto-callout-caution-color}{HTML}{FC5300}
\definecolor{quarto-callout-color-frame}{HTML}{acacac}
\definecolor{quarto-callout-note-color-frame}{HTML}{4582ec}
\definecolor{quarto-callout-important-color-frame}{HTML}{d9534f}
\definecolor{quarto-callout-warning-color-frame}{HTML}{f0ad4e}
\definecolor{quarto-callout-tip-color-frame}{HTML}{02b875}
\definecolor{quarto-callout-caution-color-frame}{HTML}{fd7e14}
  \renewcommand*\contentsname{Table of contents}
  \newcommand\contentsname{Table of contents}
  \renewcommand*\listfigurename{List of Figures}
  \newcommand\listfigurename{List of Figures}
  \renewcommand*\listtablename{List of Tables}
  \newcommand\listtablename{List of Tables}
  \renewcommand*\figurename{Figure}
  \newcommand\figurename{Figure}
  \renewcommand*\tablename{Table}
  \newcommand\tablename{Table}
\theoremstyle{plain}
\newtheorem{proposition}{Proposition}[section]
\theoremstyle{plain}
\newtheorem{theorem}{Theorem}[section]
\theoremstyle{definition}
\newtheorem{example}{Example}[section]
\theoremstyle{plain}
\newtheorem{corollary}{Corollary}[section]
\theoremstyle{definition}
\newtheorem{definition}{Definition}[section]
\theoremstyle{remark}
\newtheorem{refremark}{Remark}[section]
    \newcommand\fs@nocaption{
      \def\@fs@cfont{\bfseries}
      \let\@fs@capt\floatc@plain
      \def\@fs@pre{}%
      \def\@fs@post{\kern2pt\hrule}%
      \def\@fs@mid{\hrule\kern2pt}%
      \let\@fs@iftopcapt\iftrue}
\newcounter{quartocallouttipno}
\newcommand{\quartocallouttip}[1]{\refstepcounter{quartocallouttipno}\label{#1}}
\newcounter{quartocalloutcauno}
\newcommand{\quartocalloutcau}[1]{\refstepcounter{quartocalloutcauno}\label{#1}}
\title{Fast confidence bounds for the false discovery proportion over a
path of hypotheses}
\author{Guillermo Durand}
\date{2025-10-09}
\begin{document}
\definecolor{computo-blue}{HTML}{034E79}

\begin{tikzpicture}[remember picture,overlay]
\fill[computo-blue]
  (current page.north west) -- (current page.north east) --
  ([yshift=-5cm]current page.north east|-current page.north east) --
  ([yshift=-5cm]current page.north west|-current page.north west) -- cycle;
\node[anchor=north west, xshift=.75cm,
  yshift=-.75cm] at (current page.north west) {\includegraphics[height=3cm]{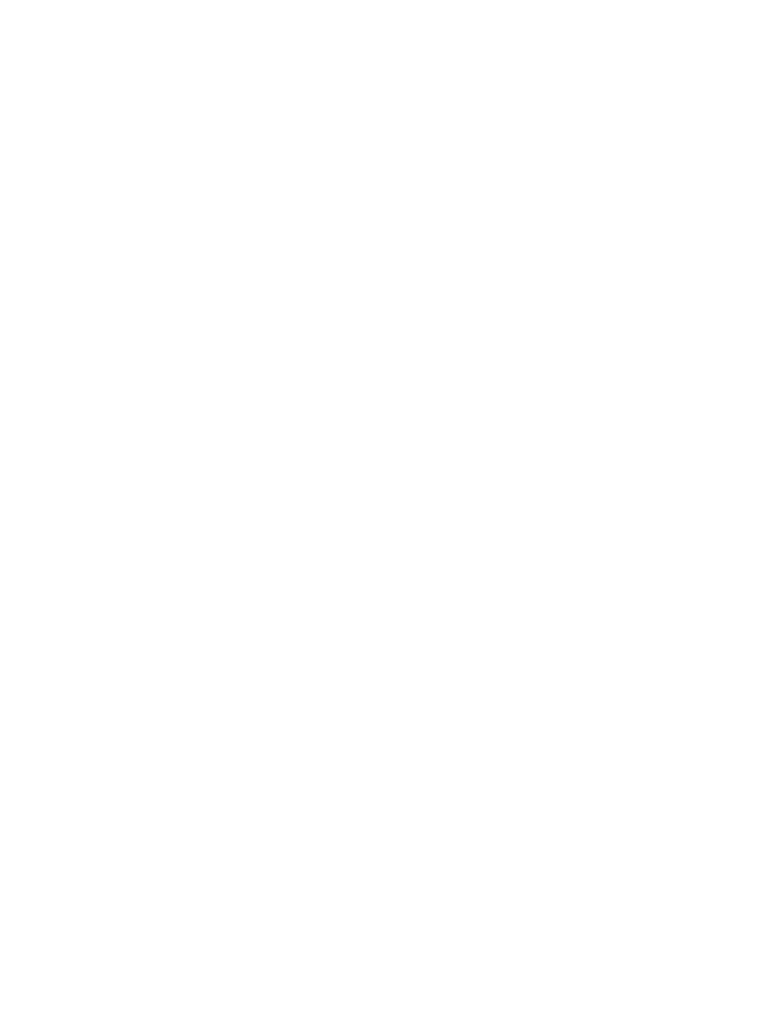}};
\node[font=\sffamily\bfseries\color{white},anchor=north west, xshift=.75cm,
  yshift=-4.25cm] at (current page.north
  west) {\fontsize{10}{12}\selectfont ISSN 2824-7795};
\node[font=\sffamily\bfseries\color{white},anchor=west,
  xshift=4.25cm,yshift=-2.75cm] at (current page.north west)
  {\begin{minipage}{15cm}
    \fontsize{25}{30}\selectfont
    Fast confidence bounds for the false discovery proportion over a
    path of hypotheses
    \vspace{.5cm}
    \\
    \fontsize{15}{18}\selectfont
    
  \end{minipage}};
\end{tikzpicture}

\vspace*{2.5cm}
\begin{center}
          Guillermo
Durand~\orcidlink{0000-0003-4056-5631}\footnote{Corresponding author: \href{mailto:guillermo.durand@universite-paris-saclay.fr}{guillermo.durand@universite-paris-saclay.fr}}\quad
             \href{https://www.universite-paris-saclay.fr/}{Université
Paris-Saclay}, \href{https://www.cnrs.fr}{CNRS},
\href{https://team.inria.fr/celeste/}{Inria},
\href{https://www.imo.universite-paris-saclay.fr}{Laboratoire de
Mathématiques d'Orsay}, 91405, Orsay, France\\
           
  \bigskip
  
  Date published: 2025-10-09 \quad Last modified: 2026-03-06
\end{center}
      
\bigskip
\begin{abstract}
This paper presents a new algorithm (and an additional trick) that
allows to compute fastly an entire curve of post hoc bounds for the
False Discovery Proportion when the underlying bound
\(V^*_{\mathfrak{R}}\) construction is based on a reference family
\(\mathfrak{R}\) with a forest structure à la \citet{MR4178188}. By an
entire curve, we mean the values
\(V^*_{\mathfrak{R}}(S_1),\dotsc,V^*_{\mathfrak{R}}(S_m)\) computed on a
path of increasing selection sets \(S_1\subsetneq\dotsb\subsetneq S_m\),
\(|S_t|=t\). The new algorithm leverages the fact that going from
\(S_t\) to \(S_{t+1}\) is done by adding only one hypothesis. Compared
to a more naive approach, the new algorithm has a complexity in
\(O(|\mathcal K|m)\) instead of \(O(|\mathcal K|m^2)\), where
\(|\mathcal K|\) is the cardinality of the family.
\end{abstract}

\noindent%
{\it Keywords:} multiple testing, algorithmic, post hoc inference, false
discovery proportion, confidence bound
\vfill

\DraftwatermarkOptions{stamp=false}

\floatname{algorithm}{Algorithm}

\renewcommand*\contentsname{Contents}
{
\hypersetup{linkcolor=}
\setcounter{tocdepth}{3}
\tableofcontents
}
\newcommand{\comp}[1]{{#1}^{\mathsf{c}}}
 \newcommand{\Pro}[1]{\mathbb{P}\left(#1\right)} 
 \newcommand{\Esp}[1]{\mathbb{E}\left[ #1 \right]}
 \newcommand{\ind}[1]{\mathbb{1}_{\left\{#1 \right\}}}
 \newcommand{\cH}{\mathcal{H}}
 \newcommand{\cK}{\mathcal{K}}
 \newcommand{\cP}{\mathcal{P}}
 \newcommand{\FDP}{\mathrm{FDP}}
 \newcommand{\FDR}{\mathrm{FDR}}
 \newcommand{\JER}{\mathrm{JER}}
 \newcommand{\Rfam}{\mathfrak{R}}
 \newcommand{\statfam}{\mathfrak{F}}
 \newcommand{\Hoi}{H_{0,i}}
 \newcommand{\Vhat}{\widehat V}
 \newcommand{\Vstar}{V^*_{\Rfam}}
 \newcommand{\Nm}{\mathbb{N}_m}
 \newcommand{\pr}{\mathfrak{pr}}
 \newcommand{\kth}[2]{k^{(#1,#2)}}
 \newcommand{\RR}{\mathbb{R}}
 \newcommand{\telque}{\,:\,}

\section{Introduction}\label{introduction}

Multiple testing theory is often used for exploratory analysis, like in
Genome-Wide Association Studies, where multiple features are tested to
find promising ones. Classical multiple testing theory like Family-Wise
Error Rate (FWER) control or False Discovery Rate (FDR) control
\citep{MR1325392} can be used, but a more recent trend consists in the
computation of confidence upper bounds for the number of false
discoveries, or, equivalently, for the False Discovery Proportion (FDP).
This approach is notably advocated in the context of exploratory
research by \citep[Section 1]{MR2951390}.

Mathematically speaking, assume that we observe some data \(X\) that is
formally a random variable defined on some probability space equipped of
the probability measure \(\mathbb P\), and that the distribution of
\(X\), denoted by \(P\), belongs to a model \(\mathfrak{F}\). We want to
test \(m\) null hypotheses
\(H_{0,1},\dotsc,H_{0,m} \subset \mathfrak{F}\). A confidence upper
bound (also frequently named post hoc bound, post selection bound or
confidence envelope) is then a function
\(\widehat V: \mathcal{P}(\mathbb{N}_m^*) \to \mathbb{N}_m\), where
\(\mathbb{N}_m=\{0,\dotsc,m\}\), \(\mathbb{N}_m^*=\{1,\dotsc,m\}\), such
that \begin{equation}
\forall \alpha \in (0,1),\; \mathbb{P}\left(\forall S \subseteq \mathbb{N}_m^*, |S\cap \mathcal{H}_0|\leq \widehat V(S)\right)\geq 1-\alpha.
\label{eq_confidence}
\end{equation} Here, \(\alpha\) is a target error rate and
\(\mathcal{H}_0=\{i: P\in H_{0,i}\}\) is the set of indices of the null
hypotheses that are true. Note that the construction of \(\widehat V\)
depends on \(\alpha\) and on the data \(X\), and the dependence is
omitted to lighten notation and because there is no ambiguity. The
meaning of Equation \eqref{eq_confidence} is that \(\widehat V\)
provides an upper bound for the number of null hypotheses in \(S\) for
any selection set \(S\subseteq \mathbb{N}_m^*\), that is, the number
\(|S\cap \mathcal{H}_0|\) of false discoveries in \(S\). This allows the
user to perform post hoc selection on their data without breaching the
statistical guarantee. Also note that by dividing by \(|S|\vee 1\) in
Equation \eqref{eq_confidence} we also get a confidence bound for the
FDP: \begin{equation}
\forall \alpha \in (0,1),\; \mathbb{P}\left(\forall S \subseteq \mathbb{N}_m^*, \mathrm{FDP}(S)\leq \frac{\widehat V(S)}{|S|\vee 1}\right)\geq 1-\alpha.
\label{eq_confidence_fdp}
\end{equation}

So post hoc bounds provide ways to construct FDP-controlling sets
instead of FDR-controlling sets, which is much more desirable given the
nature of the FDR as an expected value. See for example \citep[Figure
4]{MR3418717} for a credible example where the FDR is controlled but the
FDP has a highly undesirable behavior (either 0 because no discoveries
at all are made, either higher than the target level). The construction
is the following: one can compute the largest \(S\) such that
\(\frac{\widehat V(S)}{|S|\vee 1}\) is less than or equal to a nominal
level \(q\), and \eqref{eq_confidence_fdp} ensures that, with high
probability, the FDP of \(S\) is upper-bounded by \(q\). The control of
the FDP with high probability is sometimes called False Discovery
Exceedance (FDX) control.

Post hoc bounds have notably been applied to genetic data. For example,
in \citet{MR2951390} and \citet{10.1093/bioinformatics/btac693}, the
authors apply post hoc bounds to an Urothelial Bladder Carcinoma RNA
sequencing dataset to detect genes differentially expressed between
stage II and stage III of the disease. Furthermore, the \texttt{R}
\citep{R-base} package \texttt{IIDEA} (\citet{IIDEA}, see also
\citet{enjalbertcourrech:tel-05034928}, Chapter 3) implements a
user-friendly \texttt{shiny} application \citep{shiny} that computes
post hoc bounds for differential expression analyses, where the user can
upload their own microarray or bulk RNAseq data file (the application
also comes with the aforementioned dataset).

Another field where post hoc bounds have been successfully applied is
functional Magnetic Resonance Imaging (fMRI) studies, where each voxel
of an image is tested to detect activation of the corresponding brain
region during a given task. Using the aforementioned FDP-controlling
construction, in \citet{blain22notip} and \citet{NEURIPS2023_f6712d51},
the authors construct rejection sets with a high number of true
positives.

The first confidence bounds are found in \citet{MR2279468} and
\citet{MR2279639}, although, in the latter, only for selection sets of
the form \(\{i\in\mathbb{N}_m: p_i\leq s\}\) where \(p_i\) is the
\(p\)-value associated to the null hypothesis \(H_{0,i}\) and
\(s\in[0,1]\) is a threshold. In \citet{MR2951390} the authors re-wrote
the generic construction of \citet{MR2279468} in terms of closed testing
(a framework first introduced for the FWER control by \citet{MR468056}),
proposed several practical constructions and sparked a new interest in
multiple testing procedures based on confidence bounds. This work was
followed by a prolific series of works like \citet{MR3305943} or
\citet{MR4731977}. In \citet{MR4124323}, the authors introduce the new
point of view of references families to construct post hoc bounds, and
show the links between this meta-technique and the closed testing one,
along with new bounds. Reference families are families of couples
\((R_k,\zeta_k)_{k\in\mathcal{K}}\) where \(R_k\) is a subset of
hypotheses (called a region), and \(\zeta_k\) an over-estimator of the
number of null hypotheses inside \(R_k\), that is, of
\(|R_k\cap\mathcal{H}_0|\). From a statistical guarantee on the family,
called the Joint Error Rate (JER) control, one is able to build a post
hoc bound, denoted \(V^*_{\mathfrak{R}}\) in the remainder, by
interpolation (see Section~\ref{sec-reference-fam} for all the details).

Following the reference family trail, in \citet{MR4178188}, the authors
introduce new reference families with a special set-theoretic constraint
that allows an efficient computation of the bound
\(V^*_{\mathfrak{R}}(S)\) for a given, single selection set \(S\). The
constraint, named ``forest structure'', is that two regions of
hypotheses \(R_k\) and \(R_{k'}\) are either disjoint, or nested:
\(R_k\cap R_{k'}\in\{R_k,R_{k'},\varnothing\}\). This structure arises
when the object of study naturally presents different levels of
hierarchy. For example, in genomic studies, where each hypothesis tests
the association of a Single Nucleotid Polymorphism (SNP) with a given
character, we can exploit the natural grouping of SNPs into genes or
intergenic regions, and then the grouping of genes into genomic
pathways, or into chromosomes. In proteomic studies, where the smallest
unit is usually the peptide, we can exploit the natural grouping of
peptides into proteins, and the grouping of proteins into proteomic
pathways. In brain imagery, known brain anatomy can be used to build the
regions.

The problem is that one often wants to compute \(V^*_{\mathfrak{R}}\) on
a whole path of selection sets \((S_t)_{t\in\mathbb{N}_m^*}\), for
example the hypotheses attached to the \(t\) smallest \(p\)-values:
\(S_t=\{\sigma(1),\dots,\sigma(t)\}\), where \(\sigma\) is a (random)
permutation ordering the \(p\)-values:
\(p_{\sigma(1)}\leq\dotsb\leq p_{\sigma(m)}\). Whereas the algorithm
provided in the aforementioned work \citep[Algorithm 1]{MR4178188},
which is reproduced here (see Algorithm~\ref{algo-vstar}) is fast for a
single evaluation, it is slow and inefficient to repeatedly call it to
compute each \(V^*_{\mathfrak{R}}(S_t)\). If the \(S_t\)'s are nested,
and growing by one, that is \(S_1\subsetneq\dotsb\subsetneq S_m\) and
\(|S_t|=t\), there is a way to efficiently compute
\(\left(V^*_{\mathfrak{R}}(S_t)\right)_{t\in\mathbb{N}_m}\) by
leveraging the nested structure.

This is the main contribution of the present paper: a new and fast
algorithm (Algorithm~\ref{algo-formal-curve}) computing the curve
\(\left(V^*_{\mathfrak{R}}(S_t)\right)_{t\in\mathbb{N}_m}\) for a nested
path of selection sets, that is presented in
Section~\ref{sec-fast-curve}. An additional pruning algorithm, that can
speed up computations both for the single-evaluation algorithm and the
new curve-evaluation algorithm, is also presented in
Section~\ref{sec-pruning}. Notably, a detailed example illustrating how
the new algorithms work is provided in Section~\ref{sec-example}. In
Section~\ref{sec-notation}, all necessary notation and vocabulary is
re-introduced, most of it being the same as in \citet{MR4178188}. In
Section~\ref{sec-implementation}, we discuss the current implementations
of all the presented algorithms in the \texttt{R} \citep{R-base} package
\texttt{sanssouci} \citep{sanssouci}, with an example code. A few
numerical experiments are presented in Section~\ref{sec-numeric} to
demonstrate the computation time gain. We reproduce here, in
Figure~\ref{fig-benchmark_intro}, the striking results of one of those
experiments, where the combination of the two new algorithms improves
the computation time by a factor \(33000\). Finally, after some
concluding remarks in Section~\ref{sec-conclusion}, the proofs of all
results, including the proof that Algorithm~\ref{algo-formal-curve}
indeed computes correctly the curve, are presented in
Section~\ref{sec-proofs}.

\begin{figure}

\centering{

\includegraphics{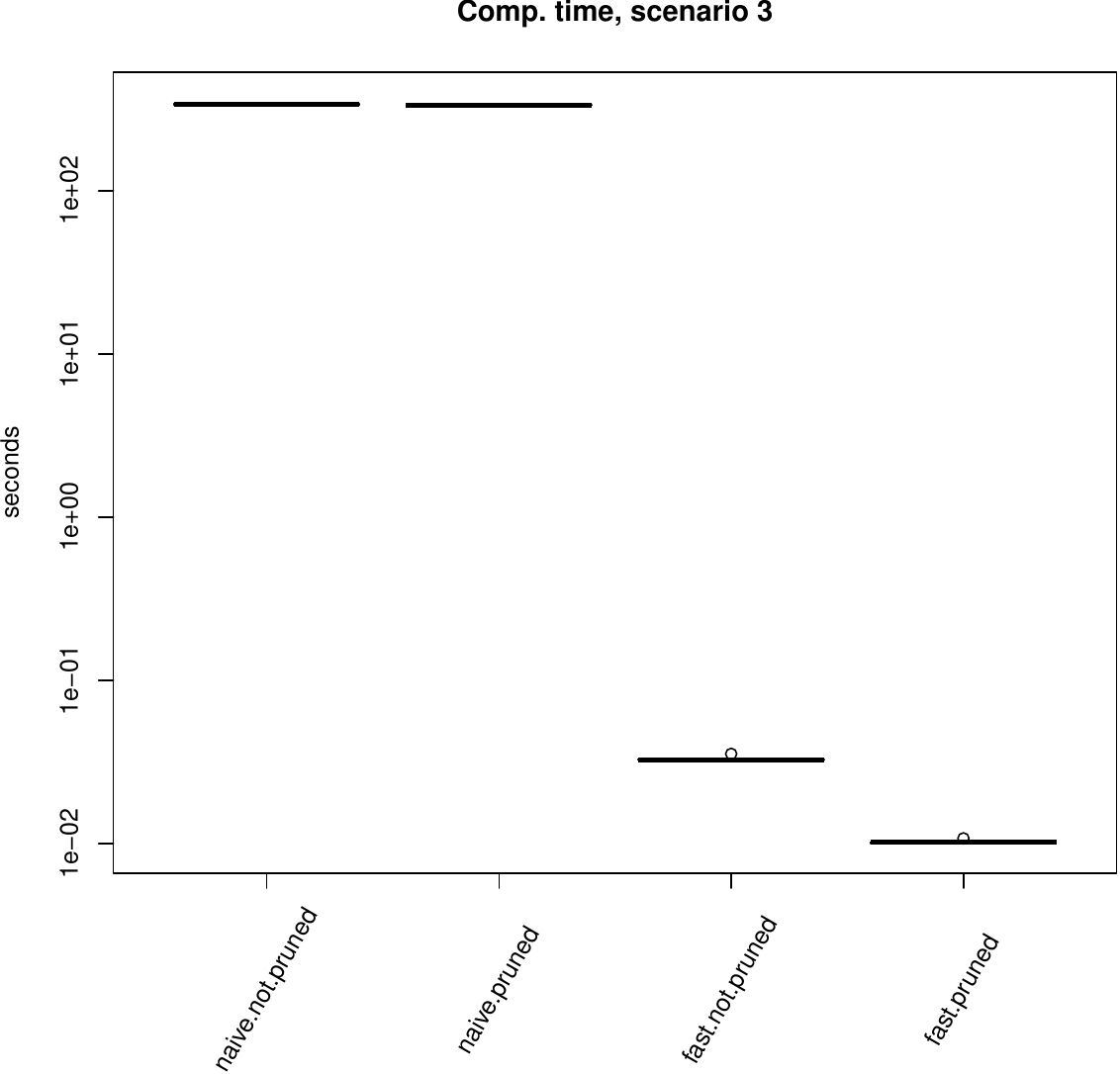}

}

\caption{\label{fig-benchmark_intro}Computation time in scenario 3 for
the new fast algorithm versus the previous, naive approach, in seconds
(using a logarithmic scale)}

\end{figure}%

\section{Notation and reference family methodology}\label{sec-notation}

\subsection{Multiple testing notation}\label{multiple-testing-notation}

As is usual in multiple testing theory, we consider a probability space
\((\Omega,\mathcal A, \mathbb P)\), a model \(\mathfrak{F}\) on a
measurable space \((\mathcal{X},\mathfrak{X})\), and data that is
represented by a random variable
\(X:(\Omega,\mathcal A)\to(\mathcal{X},\mathfrak{X})\) with
\(X\sim P\in \mathfrak{F}\), that is, the law of \(X\) is comprised in
the model \(\mathfrak{F}\).

Then we consider \(m\geq1\) null hypotheses \(H_{0,1}, \dotsc, H_{0,m}\)
which formally are submodels, that is subsets of \(\mathfrak{F}\). The
associated alternative hypotheses \(H_{1,1}, \dotsc, H_{1,m}\) are
submodels such that \(H_{0,i}\cap H_{1,i}=\varnothing\) for all
\(i\in\mathbb{N}_m^*\). We denote by \(\mathcal{H}_0=\mathcal{H}_0(P)\)
(the dependence in \(P\) will be dropped when there is no ambiguity) the
set of all null hypotheses that are true, that is
\(\mathcal{H}_0(P)=\{i\in\mathbb{N}_m^* : P\in H_{0,i}\}\). In other
words, \(H_{0,i}\) is true if and only if \(i\in\mathcal{H}_0\). For
testing each \(H_{0,i}, i\in\mathbb{N}_m^*\), we have at hand a
\(p\)-value \(p_i=p_i(X)\) (the dependence in \(X\) will be dropped when
there is no ambiguity) which is a random variable with the following
property : if \(i\in\mathcal{H}_0\), then the law of \(p_i\) is
super-uniform, which is sometimes denoted
\(\mathcal L(p_i)\succeq \mathcal{U}([0,1])\). This means that in such
case, the cumulative distribution function (cdf) of \(p_i\) is always
smaller than or equal to the cdf of a random variable
\(U\sim \mathcal{U}([0,1])\) : \begin{equation}
\forall x \in \mathbb{R},\; \mathbb{P}\left(p_i\leq x\right)\leq \mathbb{P}\left(U\leq x\right) = 0\vee(x\wedge 1).
\label{eq_super_unif}
\end{equation}

For every subset of hypotheses \(S\subseteq\mathbb{N}_m^*\), let
\(V(S)=|S\cap\mathcal{H}_0|\). If we think of \(S\) as a selection set
of hypotheses deemed significant, \(V(S)\) is then the number of false
discoveries, or false positives, in \(S\). \(V(S)\) is our main object
of interest and the quantity that we wish to over-estimate with
confidence upper bounds (see Equation \eqref{eq_confidence} or the more
formal Equation \eqref{eq_confidence_formal} below).

Finally let us consider the following toy example, that will re-appear
slightly simplified in Section~\ref{sec-numeric}.

\begin{example}[Gaussian
one-sided]\protect\hypertarget{exm-gauss}{}\label{exm-gauss}

In this case we assume that \(X=(X_1,\dotsc,X_m)\) is a Gaussian vector
and the null hypotheses refer to the nullity of the means in contrast to
their positivity. That is, formally,
\((\mathcal{X},\mathfrak{X})=(\mathbb R^m, \mathcal B\left(\mathbb R^m  \right))\),
\(\mathcal P=\{ \mathcal N(\boldsymbol{\mu}, \Sigma) : \forall j \in\mathbb{N}_m^*, \mu_j\geq 0, \Sigma \text{ positive semidefinite}  \}\),
for each \(i\in\mathbb{N}_m^*\),
\(H_{0,i}= \{ \mathcal N(\boldsymbol{\mu}, \Sigma) \in \mathcal P :\mu_i=0 \}\)
and
\(H_{1,i}=\{ \mathcal N(\boldsymbol{\mu}, \Sigma) \in \mathcal P :\mu_i>0 \}\).
Then we can construct \(p\)-values by letting \(p_i(X)=1-\Phi(X_i)\),
where \(\Phi\) denotes the cdf of \(\mathcal N(0,1)\).

\end{example}

\subsection{Post hoc bounds with reference
families}\label{sec-reference-fam}

With the formalism introduced in last section, a confidence upper bound
is a functional
\(\widehat V:\mathcal X\times (0,1)\to(\mathcal P(\mathbb{N}_m^*) \to \mathbb{N}_m)\)
such that, \begin{equation}
\forall P\in\mathcal P, \forall X\sim P, \forall \alpha \in (0,1),\; \mathbb{P}\left(\forall S \subseteq \mathbb{N}_m^*, V(S)\leq \widehat V(X,\alpha)(S)\right)\geq 1-\alpha.
\label{eq_confidence_formal}
\end{equation} In the remainder, the dependence in \((X,\alpha)\) will
be dropped when there is no ambiguity and
\(\widehat V(X,\alpha)(\cdot)\) will simply be written \(\widehat V\).

As said in the Introduction, many constructions, ultimately
theoretically equivalent but differing by the practical steps involved,
exist, and in this paper we focus on the meta-construction of
\citet{MR4124323} based on reference families. A reference family is a
finite family
\(\mathfrak{R}=\mathfrak{R}(X,\alpha)=(R_k,\zeta_k)_{k\in \mathcal K}\)
with \(R_k\subseteq\mathbb{N}_m^*\),
\(\zeta_k\in\left\{0,\dotsc,|R_k|\right\}\) where everything (that is,
\(\mathcal K\) and all the \(R_k\) and \(\zeta_k\)) depends on
\((X,\alpha)\) but the dependency is not explicitly written. The \(R_k\)
are assumed all distinct almost surely (see Remark~\ref{rem-distinct}).

The intuition of the concept of reference families is the following. The
\(R_k\)'s are subsets, or regions, of hypotheses, and the associated
\(\zeta_k\)'s are over-estimators of \(V(R_k)\). That is, building a
reference family amounts to building a collection of regions, that can
be much smaller than all possible subsets of hypotheses (note that
\(\left|\mathcal P(\mathbb{N}_m^*)\right|=2^m\) is likely to be very
large), for which we have a confidence upper bound. This bound, holding
only on the \(R_k\)'s in the first place, will then be extended to a
simultaneous confidence bound over all subsets \(S\) (in the sense of
Equation \eqref{eq_confidence_formal}) by an interpolation scheme
explained below.

The statistical guarantee over the \(\zeta_k\)'s, as over-estimators of
\(V(R_k)\), is written in terms of the following error criterion for a
reference family, named Joint Error Rate (JER): \begin{equation}
\mathrm{JER}(\mathfrak{R}) = \mathbb{P}\left(\exists k\in\mathcal K, |R_k\cap\mathcal{H}_0| > \zeta_k \right) = \mathbb{P}\left(\exists k\in\mathcal K, V(R_k) > \zeta_k \right).
\label{eq_jer}
\end{equation} We say that the reference family \(\mathfrak{R}\)
controls the JER if the following is true: \begin{equation}
\forall P\in\mathcal P, \forall X\sim P, \forall \alpha \in (0,1),\; 1-\mathrm{JER}(\mathfrak{R}(X,\alpha))=\mathbb{P}\left(\forall k\in\mathcal K, V(R_k)\leq \zeta_k\right) \geq 1-\alpha.
\label{eq_jer_control}
\end{equation} Note that Equation \eqref{eq_jer_control} is, as
foretold, really similar to Equation \eqref{eq_confidence_formal} except
that the uniform guarantee, instead of being over all
\(S\subseteq \mathbb{N}_m^*\), is only over all the
\(R_k\subseteq \mathbb{N}_m^*, k\in\mathcal K\), with \(\mathcal K\)
having cardinality potentially much smaller than \(2^m\). A ``global''
confidence bound is then derived from the JER-controlling reference
family with the following two steps. First let \begin{equation}
\mathcal A(\mathfrak{R})= \left\{A\subseteq \mathbb{N}_m^*:  \forall k\in\mathcal K, |R_k\cap A| \leq \zeta_k \right\}.
\label{eq_a}
\end{equation} The JER control says that, with high probability,
\(\mathcal{H}_0\in\mathcal A(\mathfrak{R})\). We then leverage this
information by interpolation, with the following construction:
\begin{equation}
V^*_{\mathfrak{R}}(S) = \max_{A\in\mathcal A(\mathfrak{R})}|S\cap A|.
\label{eq-vstar}
\end{equation} By Proposition 2.1 of \citet{MR4124323}, the JER control
of the family in Equation \eqref{eq_jer_control} implies that
\(V^*_{\mathfrak{R}}\) is indeed a confidence bound as required by
Equation \eqref{eq_confidence_formal}. The same Proposition also
establishes that \(V^*_{\mathfrak{R}}\) optimally uses the information
provided by the JER control of the reference family.

Note that, because of the \(\max_{A\in\mathcal A(\mathfrak{R})}\), the
computation of \(V^*_{\mathfrak{R}}(S)\) is generally intractable (see
Proposition 2.2 of \citet{MR4124323}), but for specific structures of
reference families, a polynomial computation can be derived. This is the
topic of \citet{MR4178188} and of the remainder of this paper.

\begin{refremark}
The specific computation of the \(R_k\)'s and the \(\zeta_k\)'s such
that Equation \eqref{eq_jer_control} holds is outside the scope of the
present paper, but different constructions can be found in
\citet{MR4124323}, \citet{MR4178188}, \citet{blain22notip} or
\citet{JMLR:v25:23-1025}, for example.

\label{rem-zeta}

\end{refremark}

\begin{refremark}
Some reference family constructions can yield \(R_k=R_{k'}\) for
\(k\neq k'\), for example in the setting of \citet{JMLR:v25:23-1025}
with discrete \(p\)-values. But in that scenario we can always prune the
duplicate and keep only one index so that \(k\mapsto R_k\) is injective.
We implicitly consider that this operation is always done in practice
and in the remainder of this article. Of course, if \(R_k=R_{k'}\) with
\(k\neq k'\), we keep the index with the lower value of \(\zeta\), that
is we keep \(\tilde k\in\arg\min_{\ell\in\{k,k'\}}\zeta_{\ell}\) (not
doing so would change the bound defined by \eqref{eq-vstar} and decrease
its power in terms of type-II error). Similarly, some constructions can
yield empty regions, which can always be pruned without changing the
bound. This will also be assumed to be the case in the following.
Finally, note that the constraint that \(\zeta_k\leq |R_k|\) always hold
no matter how \(\zeta_k\) was computed, up to replacing \(\zeta_k\) by
\(\zeta_k\wedge |R_k|\): it is clear on Equation \eqref{eq_a} that this
doesn't change the bound.

\label{rem-distinct}

\end{refremark}

\subsection{Regions with a forest structure}\label{sec-forest-structure}

The core concept of this section is to assume that the regions \(R_k\)'s
of the reference family are what we called in \citet{MR4178188} a forest
structure, that is two regions are either disjoint or nested:
\begin{equation}
\forall k,k'\in\mathcal{K} , R_k \cap R_{k'} \in \{ R_k,  R_{k'} , \varnothing \}.
\label{eq-forest}
\end{equation} Representing the \(R_k\)'s with a directed graph, where
there is an oriented edge \(R_k \leftarrow R_{k'}\) if and only if
\(R_k \subseteq R_{k'}\) and there is no \(R_{k''}\) such that
\(R_k \subsetneq R_{k''}\subsetneq R_{k'}\) gives a forest, hence the
name. See Example~\ref{exm-toy-forest} and its representation in
Figure~\ref{fig-forest-exm}.

We also need to introduce the notion of depth with the following
function: \begin{equation}
\phi \:  : \: \left\{
\begin{array}{l  c l  }
 \mathcal{K}& \to & \mathbb{N}^*\\
k & \mapsto & 1 + \left| \{k'\in\mathcal{K}: R_k\subsetneq R_{k'} \} \right|   .
\end{array}
\right.
\label{eq-depth}
\end{equation} This definition matches the intuition of depth because we
assumed the \(R_k\) are distinct, see Remark~\ref{rem-distinct}.

In all the remainder, \(H\) refers to the maximum depth in the
structure: \(H=\max_{k\in\mathcal{K}}\phi(k)\).

\begin{example}[]\protect\hypertarget{exm-toy-forest}{}\label{exm-toy-forest}

Let \(m=25\), \(R_1 = \{1, \dotsc , 20 \}\), \(R_2  =  \{1, 2  \}\),
\(R_3   =   \{3 , \dotsc , 10 \}\), \(R_4  =    \{11, \dotsc , 20 \}\),
\(R_5 =  \{5, \dotsc , 10 \}\), \(R_6   =     \{11, \dotsc , 16 \}\),
\(R_7  =   \{17, \dotsc ,20  \}\), \(R_8=\{21,22\}\), \(R_9 = \{22\}\).
This is the same example as Example 2 of \citet{MR4178188} and it is
graphically depicted in Figure~\ref{fig-forest-exm}. The sets \(R_1\),
\(R_8\) are of depth \(1\); the sets \(R_2,R_3,R_4,R_9\) are of depth
\(2\); the sets \(R_5,R_6,R_7\) are of depth \(3\).

\end{example}

\begin{figure}

\centering{

\includegraphics{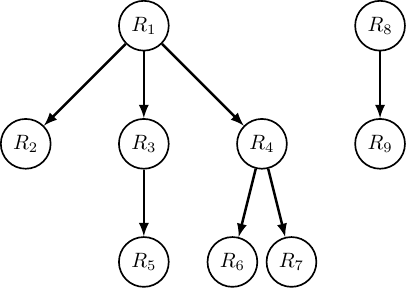}

}

\caption{\label{fig-forest-exm}The regions of
Example~\ref{exm-toy-forest}.}

\end{figure}%

Another tool of \citet{MR4178188} that will be used is its Lemma 2, that
is the identification of \(\mathfrak{R}\) with a set
\(\mathcal C\subset \left\{(i,j)\in \left(\mathbb N_N^*\right)^2 \: : i\leq j\right\}\)
such that for \((i,j), (i',j')\in\mathcal C\),
\(\{i,\dotsc, j\}\cap\{i',\dotsc,j'\}\in\left\{\varnothing, \{i,\dotsc, j\},\{i',\dotsc j'\}  \right\}\).
With this identification, each \(R_k=R_{(i,j)}\) can be written as
\(P_{i:j}=\bigcup_{i\leq n\leq j}P_n\) where \((P_n)_{1\leq n \leq N}\)
is a partition of \(\mathbb{N}_m^*\). The \(P_n\)'s were called atoms in
\citet{MR4178188} because they have the thinnest granularity in the
structure, but to continue the analogy with graphs, forests and trees,
they can also be called leafs. See Example~\ref{exm-toy-leaves} for a
concrete example.

\begin{example}[Continuation of
Example~\ref{exm-toy-forest}]\protect\hypertarget{exm-toy-leaves}{}\label{exm-toy-leaves}

For the reference family given in Example~\ref{exm-toy-forest}, a
partition of atoms is given by \(P_1 =R_2\),
\(P_2  =   R_3\setminus R_5\), \(P_3  =   R_5\), \(P_4=R_6\),
\(P_5=R_7\), \(P_6=R_8\setminus R_9\), \(P_7=R_9\),
\(P_8=\mathbb{N}_m^* \setminus \{R_1 \cup R_8 \}\). Then
\(R_1=P_{1:5}\), \(R_3=P_{2:3}\), \(R_4=P_{4:5}\) and \(R_8=P_{6:7}\).
Note that not all atoms are regions of the family. Those new labels are
graphically depicted in Figure~\ref{fig-leaves-exm}. The nodes that
correspond to atoms that are not in the family are depicted with a
dashed circle, and all atoms are depicted in gray. This is the same
example as Example 3 of \citet{MR4178188}.

\end{example}

\begin{figure}

\centering{

\includegraphics{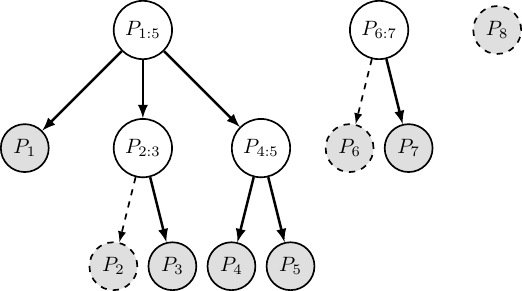}

}

\caption{\label{fig-leaves-exm}The regions of
Example~\ref{exm-toy-forest} but with the labels of
Example~\ref{exm-toy-leaves}.}

\end{figure}%

When all leaves are regions of the family, it is said that the family is
complete. If this is not the case, the family can easily be completed by
adding the missing leaves (and using their cardinality as associated
\(\zeta\)) without changing the value \(V^*_{\mathfrak{R}}\). See
Definition 2, Lemma 6 and Algorithm 2 of \citet{MR4178188} for the
details.

\citet{MR4178188} also proved in their Theorem 1 that: \begin{equation}
V^*_{\mathfrak{R}}(S)=\min_{Q\subseteq\mathcal{K}}\left(\sum_{k'\in Q}\zeta_{k'}\wedge|S\cap R_{k'}|+\left| S\setminus\bigcup_{k'\in Q} R_{k'}   \right|\right)
\label{eq_vstar_Q} 
\end{equation} and, even better, in their Corollary 1 \emph{(iii)} that:
\begin{equation}
V^*_{\mathfrak{R}}(S) = \min_{Q\in \mathfrak P}\sum_{k'\in Q}\zeta_{k'}\wedge|S\cap R_{k'}|,
\label{eq_vstar_Qpartition}
\end{equation} provided that the family is complete. Here,
\(\mathfrak P \subseteq \mathcal P(\mathcal{K})\) is the set of subsets
of \(\mathcal{K}\) that realize a partition, that is, the set of
elements \(Q\subseteq\mathcal{K}\) such that the \(R_k\), \(k\in Q\),
form a partition of \(\mathbb{N}_m^*\). So the minimum in Equation
\eqref{eq_vstar_Qpartition} is over way less elements than in Equation
\eqref{eq_vstar_Q}.

Finally, that paper provides a polynomial algorithm to
\(V^*_{\mathfrak{R}}(S)\) for a single \(S\subseteq\mathbb{N}_m^*\),
which we reproduce here in Algorithm~\ref{algo-vstar}. The family is
assumed complete, otherwise the first step would be to complete it. In
the original paper, \(\mathcal{K}^h\) used to designate the elements of
\(\mathcal{K}\) at depth \(h\) plus the atoms at depth \(\leq h\).
Actually, including those atoms is not needed for this algorithm to
perform exactly the same, and produces redundant computations. If we
don't include them, the only difference is that sometimes \(Succ_k\) can
be empty, in which case we simply let
\(newVec_k=\zeta_k\wedge|S\cap R_k|\). Thus, here in this paper, we
define \(\mathcal{K}^h\) as only the elements of \(\mathcal{K}\) at
depth \(h\) (the previous intricate definition may still be necessary
for the proof of Theorem 1 of \citet{MR4178188}):
\(\mathcal{K}^h=\{ (i,j)\in\mathcal{K}: \phi(i,j)=h      \}, \:\:\:h\geq 1.\)
This is the only deviation from the notation of \citet{MR4178188}.
Finally note that in the ongoing analogy with graph theory, the elements
of \(\mathcal{K}^1\) are the roots of the different trees making up the
forest.

\floatstyle{ruled}
\restylefloat{algorithm}
\begin{algorithm}[htb!]
\caption{\label{algo-vstar}Computation of a given $V^*_{\mathfrak{R}}(S)$ with a complete family}
\begin{algorithmic}[1]
\Procedure{Vstar}{S, $\mathfrak{R}=(R_{k},\zeta_{k})_{k\in\mathcal{K}}$  with $\mathfrak{R}$ complete}
  \State $ H \gets \max_{k\in\mathcal{K}} \phi(k)  $ \Comment{maximum depth}
  \State $Vec\gets (\zeta_{k}\wedge|S\cap R_k|)_{k \in  \mathcal{K}^H}$ \Comment{initialization}
  \For{$h = H-1, \dotsc, 1$}
    \State $\mathcal{K}^h\gets \{ k\in\mathcal{K} : \phi(k) =h  \}$
    \State $newVec\gets (0)_{k \in  \mathcal{K}^h}$
    \For{$k \in  \mathcal{K}^h$}
      \State $Succ_k \gets \{ k' \in  \mathcal{K}^{h+1} : R_{k'}\subseteq R_k\}$
      \If{$Succ_k=\varnothing$}
        \State $newVec_k \gets \zeta_k\wedge|S\cap R_k|$
      \Else
        \State $newVec_k \gets \min\left( \zeta_{k}\wedge|S\cap R_k| ,  \sum_{k'\in Succ_k} Vec_{k'}   \right)$
      \EndIf
    \EndFor
    \State $Vec\gets newVec$
  \EndFor
  \State\Return $\sum_{k\in\mathcal{K}^1} Vec_k  $
\EndProcedure
\end{algorithmic}
\end{algorithm}
\floatstyle{plain}

A step by step description of Algorithm~\ref{algo-vstar} is provided at
the end of Section 3 and in the Figure 9 of \citet{MR4178188}.

The computation time of the algorithm is in \(O(|\mathcal{K}||S|)\),
which is fast for a single evaluation, but calling it repeatedly on a
path of selection sets \((S_t)_{t\in\mathbb{N}_m^*}\) has complexity
\(O(|\mathcal{K}|m^2)\), which is not desirable and makes computations
difficult in practice, hence the need for a new, faster algorithm.

\begin{tcolorbox}[enhanced jigsaw, left=2mm, coltitle=black, toptitle=1mm, colback=white, bottomtitle=1mm, breakable, title=\textcolor{quarto-callout-tip-color}{\faLightbulb}\hspace{0.5em}{Tip \ref*{tip-algo1}: Tip}, titlerule=0mm, arc=.35mm, opacityback=0, opacitybacktitle=0.6, colbacktitle=quarto-callout-tip-color!10!white, toprule=.15mm, bottomrule=.15mm, rightrule=.15mm, colframe=quarto-callout-tip-color-frame, leftrule=.75mm]

\quartocallouttip{tip-algo1} 

In the practical implementation of this algorithm (and of the following
Algorithm~\ref{algo-pruning}), \(Vec\) and \(newVec\) are always of size
\(N\) (the number of leaves) instead of the cardinality of
\(\mathcal{K}^h\). And the sum \(\sum_{k'\in Succ_k} Vec_{k'}\) is
really easy to compute: if
\(R_k= R_{(i_0,i_{p}-1)}= \bigcup_{ j=1}^{p} R_{(i_{ j-1}, i_{ j}-1)}=\bigcup_{i_0\leq n\leq i_{p}-1}P_n\in\mathcal{K}^h\)
for some \(p\geq2\), a strictly increasing sequence
\((i_0,\dotsc,i_{p})\) and
\(R_{(i_{ j-1}, i_{ j}-1)}\in\mathcal{K}^{h+1}\) for all
\(1\leq j\leq p\), then we simply sum \(Vec\) over the indices from
\(i_{0}\) to \(i_{p}-1\). After that, the computed quantity is set in
\(newVec\) at index \(i_0\). So actually computing \(Succ_k\) is not
needed and not done.

Furthermore, computing \(|S\cap R_k|\) for each \(k\) is not necessary,
it is sufficient to compute \(|S\cap P_n|\) for each leaf \(P_n\), which
can be done in \(O(N|S|)\).

By the two previous points, we can actually refine the complexity result
of Algorithm~\ref{algo-vstar}: it is in
\(O(N|S|+|\mathcal{K}|)\leq O(|\mathcal{K}||S|)\) because
\(N\leq|\mathcal{K}|\) for a complete forest, and if the forest is not
complete, the Algorithm 2 of \citet{MR4178188} constructs a partition
\((P_n)_{1\leq n\leq N}\) such that \(N\leq|\mathcal{K}|\), and so the
cardinality of the completed forest is \(\leq 2|\mathcal{K}|\).

\end{tcolorbox}

Speaking of complexity, we have the following result regarding \(m\),
\(|\mathcal{K}|\) and \(N\):

\begin{proposition}[]\protect\hypertarget{prp-cardinals}{}\label{prp-cardinals}

For any reference family \((R_k,\zeta_k)_{k\in\mathcal{K}}\) with a
forest structure, we have \(N\leq m\), \(H\leq N\),
\(|\mathcal{K}|\leq 2N-1\) and these three bounds can be achieved
simultaneously. In particular, \(|\mathcal{K}|\leq 2m-1\).

\end{proposition}

The proof of Proposition~\ref{prp-cardinals} is given in
Section~\ref{sec-cardinals}.

\section{New algorithms}\label{new-algorithms}

\subsection{Pruning the forest}\label{sec-pruning}

We remark the simple fact that if, for example,
\((1,1), (2,2), (1,2)\in\mathcal{K}\), and
\(\zeta_{(1,2)}\geq \zeta_{(1,1)}+\zeta_{(2,2)}\), then \(R_{(1,2)}\)
never contributes to the computation of any \(V^*_{\mathfrak{R}}(S)\)
and it could just be removed from \(\mathfrak{R}\). We now formalize and
prove this pruning scheme.

\begin{definition}[Pruning]\protect\hypertarget{def-pruning}{}\label{def-pruning}

We define by \(\mathcal{K}^{\mathfrak{pr}}\) (\(\mathcal{K}\) pruned)
the set of elements of \(\mathcal{K}\) from which we removed all
\((i,i')\) such that there exists \(p\geq2\) and integers
\(i_1,\dotsc,i_{p-1}\) such that, when setting \(i_0=i\) and
\(i_{p}=i'+1\), the sequence \((i_0,\dotsc,i_{p})\) is strictly
increasing, \((i_{j-1},i_{j}-1)\in\mathcal{K}\) for all
\(1\leq j\leq p\) and finally
\(\zeta_{(i,i')}=\zeta_{(i_0,i_{p}-1)}\geq \sum_{j=1}^{p} \zeta_{(i_{j-1}, i_{j}-1)}\).

\end{definition}

An important note is that for a removed
\((i,i')\in\mathcal{K}\setminus\mathcal{K}^{\mathfrak{pr}}\), we can
always choose the indices \(i_1,\dotsc,i_{p-1}\) such that actually
\((i_j,i_{j+1}-1)\in\mathcal{K}^{\mathfrak{pr}}\) and not only
\(\mathcal{K}\), because if
\((i_j,i_{j+1}-1)\in\mathcal{K}\setminus\mathcal{K}^{\mathfrak{pr}}\) it
can itself be fragmented, and this decreasing recursion eventually ends
(the later possible being at the atoms of the forest structure). Also
note that removing elements from \(\mathcal{K}\) does not alter the fact
that we have at hand a forest structure, that is, the reference family
defined by
\(\mathfrak{R}^{\mathfrak{pr}}=(R_k,\zeta_k)_{k\in\mathcal{K}^{\mathfrak{pr}}}\)
has a forest structure. Because pruning a forest structure does not
touch the atoms, note finally that if \(\mathcal{K}\) is complete then
so is \(\mathcal{K}^{\mathfrak{pr}}\).

The following proposition states that pruning the forest does not alter
the bound.

\begin{proposition}[]\protect\hypertarget{prp-pruning}{}\label{prp-pruning}

For any \(S\subseteq \mathbb{N}_m^*\),
\(V^*_{\mathfrak{R}}(S)=V^*_{\mathfrak{R}^{\mathfrak{pr}}}(S)\).

\end{proposition}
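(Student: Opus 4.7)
The plan is to establish the two inequalities $V^*_{\Rfam}(S) \le V^*_{\Rfam^{\pr}}(S)$ and $V^*_{\Rfam}(S) \ge V^*_{\Rfam^{\pr}}(S)$ separately.

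For the inequality $V^*_{\Rfam^{\pr}}(S) \ge V^*_{\Rfam}(S)$, I would work directly from the definition given by Equation \eqref{eq-vstar}. Because $\mathcal{K}^{\pr}\subseteq\mathcal{K}$, the set $\mathcal{A}(\Rfam^{\pr})$ is defined by strictly fewer constraints than $\mathcal{A}(\Rfam)$, hence $\mathcal{A}(\Rfam)\subseteq\mathcal{A}(\Rfam^{\pr})$. Taking the maximum of $|S\cap A|$ over a larger family yields a larger value, so $V^*_{\Rfam^{\pr}}(S)\ge V^*_{\Rfam}(S)$.

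For the reverse inequality I would use the dual characterization, Equation \eqref{eq_vstar_Q}, and argue that any $Q\subseteq\mathcal{K}$ achieving the minimum can be transformed into some $Q'\subseteq\mathcal{K}^{\pr}$ without increasing the objective. Concretely, pick an optimal $Q$ for $\Rfam$. If $Q\subseteq\mathcal{K}^{\pr}$ there is nothing to do. Otherwise, for each $(i,i')\in Q\setminus\mathcal{K}^{\pr}$, the important note following Definition \ref{def-pruning} supplies an index sequence $(i_0=i,i_1,\dots,i_p=i'+1)$ such that each $(i_{j-1},i_j-1)\in\mathcal{K}^{\pr}$ and $\zeta_{(i,i')}\ge\sum_{j=1}^p\zeta_{(i_{j-1},i_j-1)}$. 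Since the $R_{(i_{j-1},i_j-1)}$ form a partition of $R_{(i,i')}$, replacing $(i,i')$ by all these indices in $Q$ leaves $\bigcup_{k'\in Q}R_{k'}$ unchanged, so the term $\bigl|S\setminus\bigcup_{k'\in Q}R_{k'}\bigr|$ of \eqref{eq_vstar_Q} is unaffected. The contribution of $(i,i')$ to the first sum, namely $\zeta_{(i,i')}\wedge|S\cap R_{(i,i')}|$, is replaced by $\sum_j \zeta_{(i_{j-1},i_j-1)}\wedge|S\cap R_{(i_{j-1},i_j-1)}|$, which is bounded above by both $\sum_j \zeta_{(i_{j-1},i_j-1)}\le\zeta_{(i,i')}$ (by the pruning condition) and $\sum_j |S\cap R_{(i_{j-1},i_j-1)}|=|S\cap R_{(i,i')}|$ (by the partitioning property), hence by their minimum. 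Iterating this substitution over all elements of $Q\setminus\mathcal{K}^{\pr}$ produces the desired $Q'\subseteq\mathcal{K}^{\pr}$ with objective no larger than the original, yielding $V^*_{\Rfam^{\pr}}(S)\le V^*_{\Rfam}(S)$.

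The main subtlety, rather than a genuine obstacle, is that the fragmentation has to terminate inside $\mathcal{K}^{\pr}$; this is precisely the content of the paragraph following Definition \ref{def-pruning}, where the recursive decomposition is guaranteed to end at atoms, which are never pruned. With that point accepted, both directions are essentially one-line algebraic checks about how $x\mapsto \zeta\wedge x$ behaves with respect to partitioning, and the proposition follows.
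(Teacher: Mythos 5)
Your proof is correct and follows essentially the same route as the paper: the nontrivial inequality is obtained by fragmenting each pruned index $(i,i')$ into its $\mathcal{K}^{\mathfrak{pr}}$-pieces inside the minimization of Equation \eqref{eq_vstar_Q}, using the partition property and $\zeta_{(i,i')}\geq\sum_j\zeta_{(i_{j-1},i_j-1)}$ exactly as the paper does. The only cosmetic difference is that you derive the easy inequality from the primal definition \eqref{eq-vstar} via $\mathcal{A}(\mathfrak{R})\subseteq\mathcal{A}(\mathfrak{R}^{\mathfrak{pr}})$, whereas the paper reads it off the min formula restricted to $Q\subseteq\mathcal{K}^{\mathfrak{pr}}$.
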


The proof of Proposition~\ref{prp-pruning} is given in
Section~\ref{sec-pruning-proofs-pruning}.

This gives a practical way to speed up computations by first pruning the
family before computing any \(V^*_{\mathfrak{R}}(S)\), because
\(\mathcal{K}^{\mathfrak{pr}}\) is smaller than \(\mathcal{K}\), and by
the above Proposition there is no theoretical loss in doing so.

Furthermore, pruning can be done really simply by following
Algorithm~\ref{algo-vstar} for \(S=\mathbb{N}_m^*\), and pruning when
appropriate. This gives the following Algorithm~\ref{algo-pruning},
assuming, for simplicity, that the family is complete. Note that the
only differences between Algorithm~\ref{algo-pruning} and
Algorithm~\ref{algo-vstar} are the pruning step and \(\zeta_k\)
replacing \(\zeta_k\wedge|S\cap R_k|\), because \(\zeta_k\leq|R_k|\) and
\(S=\mathbb{N}_m^*\) here, so
\(\zeta_k\wedge|\mathbb{N}_m^*\cap R_k|=\zeta_k\).

\floatstyle{ruled}
\restylefloat{algorithm}
\begin{algorithm}[htb!]
\caption{\label{algo-pruning}Pruning of a complete $\mathfrak{R}$}
\begin{algorithmic}[1]
\Procedure{Pruning}{$\mathfrak{R}=(R_{k},\zeta_{k})_{k\in\mathcal{K}}$  with $\mathfrak{R}$ complete}
  \State $\mathcal{L}\gets\mathcal{K}$
  \State $ H \gets \max_{k\in\mathcal{K}} \phi(k)  $ \Comment{maximum depth}
  \For{$h = H-1, \dotsc, 1$}
    \State $\mathcal{K}^h\gets \{ k\in\mathcal{K} : \phi(k) =h  \}$
    \State $newVec\gets (0)_{k \in  \mathcal{K}^h}$
    \For{$k \in  \mathcal{K}^h$}
      \State $Succ_k \gets \{ k' \in  \mathcal{K}^{h+1} : R_{k'}\subseteq R_k\}$
      \If{$Succ_k=\varnothing$}
        \State $newVec_k \gets \zeta_k$
      \Else
        \If{$\zeta_{k} \geq  \sum_{k'\in Succ_k} Vec_{k'}$}
          \State $\mathcal{L}\gets \mathcal{L}\setminus \{ k \}$ \Comment{pruning of the region indexed by $k$}
        \EndIf
        \State $newVec_k \gets \min\left( \zeta_{k} ,  \sum_{k'\in Succ_k} Vec_{k'}   \right)$
      \EndIf
    \EndFor
    \State $Vec\gets newVec$
  \EndFor
  \State\Return $(\mathcal{L},\sum_{k\in\mathcal{K}^1} Vec_k  )$
\EndProcedure
\end{algorithmic}
\end{algorithm}
\floatstyle{plain}

Also note that the algorithm returns
\(V^*_{\mathfrak{R}}(\mathbb{N}_m^*)\) as a by-product. The following
proposition states that Algorithm~\ref{algo-pruning} indeed produces the
pruned region as in Definition~\ref{def-pruning}.

\begin{proposition}[]\protect\hypertarget{prp-pruning-correct}{}\label{prp-pruning-correct}

The final \(\mathcal{L}\) returned by Algorithm~\ref{algo-pruning} is
equal to \(\mathcal{K}^{\mathfrak{pr}}\):
\(\mathcal{L}=\mathcal{K}^{\mathfrak{pr}}\).

\end{proposition}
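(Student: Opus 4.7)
The plan is to verify the algorithm by introducing, for each $k\in\mathcal{K}$, an auxiliary quantity
$v_k=\min\{\sum_{k'\in Q}\zeta_{k'}\}$, the minimum taken over all $Q\subseteq\mathcal{K}$ such that $\{R_{k'}\}_{k'\in Q}$ is a partition of $R_k$. Since the trivial partition $\{k\}$ is always available, $v_k\leq \zeta_k$. Because of the forest structure and completeness, the immediate children $Succ_k$ form a partition of $R_k$ (any atom $P_n\subseteq R_k$ lies in a unique immediate child by maximality), and every non-trivial partition of $R_k$ into elements of $\mathcal{K}$ is a refinement of the $Succ_k$-partition. This yields the recursion $v_k=\zeta_k$ when $Succ_k=\varnothing$ and $v_k=\min\!\bigl(\zeta_k,\sum_{k'\in Succ_k}v_{k'}\bigr)$ otherwise, which is exactly how $Vec_k$ is updated in Algorithm~\ref{alg-pruning}. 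A bottom-up induction on depth then gives $Vec_k=v_k$ after the iteration that processes $k$.

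The second step is to prove the equivalence: Algorithm~\ref{alg-pruning} removes $k$ from $\mathcal{L}$ if and only if $k\notin\mathcal{K}^{\mathfrak{pr}}$. For the forward direction, if the algorithm removes $k$, then $Succ_k\neq\varnothing$ and $\zeta_k\geq \sum_{k'\in Succ_k}Vec_{k'}=\sum_{k'\in Succ_k}v_{k'}$. Concatenating, for each $k'\in Succ_k$, an optimal partition witnessing $v_{k'}$ yields a non-trivial partition of $R_k$ into elements of $\mathcal{K}$ whose $\zeta$-sum is $\leq \zeta_k$. Writing it as consecutive-atom chunks gives a strictly increasing sequence $i_0<\dots<i_p$ with $p\geq 2$ (since $|Succ_k|\geq 2$ whenever $Succ_k\neq\varnothing$, by completeness) satisfying the conditions of Definition~\ref{def-pruning}, so $k\notin\mathcal{K}^{\mathfrak{pr}}$. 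For the reverse direction, if $k\notin\mathcal{K}^{\mathfrak{pr}}$, Definition~\ref{def-pruning} provides an explicit witness partition $Q^*$ of $R_k$ with $\sum_{k'\in Q^*}\zeta_{k'}\leq \zeta_k$. By the refinement observation above, $\sum_{k'\in Q^*}\zeta_{k'}\geq \sum_{k'\in Succ_k}v_{k'}$, so the algorithm's pruning test $\zeta_k\geq \sum_{k'\in Succ_k}Vec_{k'}$ succeeds.

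The main technical hurdle is the refinement claim used in both directions: in a complete forest, any partition of $R_k$ by elements of $\mathcal{K}$ other than $\{k\}$ can be grouped according to which immediate child contains each part. This needs completeness (so atoms are in $\mathcal{K}$ and the $Succ_k$ really cover $R_k$) together with the dichotomy \eqref{eq-forest}, which forces every $R_{k''}\subsetneq R_k$ to be contained in a unique $R_{k'}$ with $k'\in Succ_k$. Once the refinement is in place, the equivalence is direct and the proposition follows since the algorithm only ever removes elements, so the final $\mathcal{L}$ is exactly the set of $k\in\mathcal{K}$ that survived the test, i.e.\ $\mathcal{K}^{\mathfrak{pr}}$.
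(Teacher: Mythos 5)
Your proof is correct, and it is worth comparing with the paper's, because while the underlying idea is the same (the algorithm computes bottom-up the minimal $\zeta$-cost of partitioning each $R_k$ by elements of $\mathcal{K}$), the execution differs. The paper proves the two inclusions directly: the inclusion $\mathcal{K}\setminus\mathcal{L}\subseteq\mathcal{K}\setminus\mathcal{K}^{\mathfrak{pr}}$ is declared obvious, and for the converse it invokes the proof of Theorem 1 of \citet{MR4178188}, applied to $S=R_{(i,i')}$, to obtain the key inequality $\sum_{j=1}^{p}\zeta_{(i_{j-1},i_j-1)}\geq\sum_{k'\in Succ_{(i,i')}}Vec_{k'}$. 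You instead make explicit the invariant $Vec_k=v_k$, where $v_k$ is the minimal $\zeta$-sum over partitions of $R_k$ into regions of $\mathcal{K}$, via the refinement lemma (any non-trivial partition of $R_k$ refines the $Succ_k$-partition, which exists and has at least two blocks by completeness) and a bottom-up induction; both inclusions then follow from this single characterization. This buys self-containedness — no appeal to the internals of the earlier paper's proof — and it also supplies the justification that the paper's ``obvious'' direction silently uses, namely that $\sum_{k'\in Succ_k}Vec_{k'}$ is actually achieved by a concatenation of partitions witnessing each $v_{k'}$, so that $\zeta_k\geq\sum_{k'\in Succ_k}Vec_{k'}$ really does produce a sequence $(i_0,\dotsc,i_p)$ as in Definition~\ref{def-pruning} (note $\zeta_k\geq\sum_{k'}Vec_{k'}$ alone does not give $\zeta_k\geq\sum_{k'}\zeta_{k'}$, since $Vec_{k'}\leq\zeta_{k'}$). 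Two small points you leave implicit but which are immediate: in the reverse direction one should note that $k\notin\mathcal{K}^{\mathfrak{pr}}$ forces $R_k$ to be a non-atom, hence $Succ_k\neq\varnothing$ under completeness, so the algorithm does reach the pruning test for $k$; and the elements of maximal depth, which the loop never tests, are atoms and are pruned by neither the algorithm nor the definition, so the equality is unaffected.
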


The proof of Proposition~\ref{prp-pruning-correct} is given in
Section~\ref{sec-pruning-proofs-pruning-correct}.

\begin{tcolorbox}[enhanced jigsaw, left=2mm, coltitle=black, toptitle=1mm, colback=white, bottomtitle=1mm, breakable, title=\textcolor{quarto-callout-tip-color}{\faLightbulb}\hspace{0.5em}{Tip \ref*{tip-pruning-complexity}: Tip}, titlerule=0mm, arc=.35mm, opacityback=0, opacitybacktitle=0.6, colbacktitle=quarto-callout-tip-color!10!white, toprule=.15mm, bottomrule=.15mm, rightrule=.15mm, colframe=quarto-callout-tip-color-frame, leftrule=.75mm]

\quartocallouttip{tip-pruning-complexity} 

We saw that Algorithm~\ref{algo-vstar} has \(O(N|S|+|\mathcal{K}|)\)
complexity, the \(N|S|\) term coming from the evaluation of the
\(|S\cap P_i|\) terms, \(1\leq i\leq N\). Here,
\(|S\cap P_i|=|\mathbb{N}_m^*\cap P_i|=|P_i|\) can be accessed in
\(O(1)\), and \(N\leq |\mathcal{K}|\) for a complete family, so
Algorithm~\ref{algo-pruning} simply has \(O(|\mathcal{K}|)\) complexity.

\end{tcolorbox}

\subsection{Fast algorithm to compute a curve of confidence bounds on a
path of selection sets}\label{sec-fast-curve}

Let \((i_1,\dotsc, i_m)\) a permutation of \(\mathbb{N}_m^*\),
eventually random, and, for all \(t\in\mathbb{N}_m^*\), let
\(S_t=\{i_1,\dotsc,i_t\}\) and \(S_0=\varnothing\). For example,
\((i_1,\dotsc, i_m)\) can be the permutation ordering the \(p\)-values
in increasing order and in that case \(S_t\) becomes the set of indices
of the \(t\) smallest \(p\)-values. Assume that we want to compute all
\(V^*_{\mathfrak{R}}(S_t)\) for all \(t\in\{ 0,\dotsc,m\}\), this is
what we call the curve of confidence bounds indexed by
\((i_1,\dotsc, i_m).\) Applying Algorithm~\ref{algo-vstar} to compute
\(V^*_{\mathfrak{R}}(S_t)\) for a given \(t\) has complexity
\(O(|\mathcal{K}|t)\), so using it to sequentially compute the full
curve has complexity
\(O\left(  |\mathcal{K}|\sum_{t=0}^m t\right)=O\left(|\mathcal{K}|m^2\right)\).
In this section, we present a new algorithm that computes the curve with
a \(O\left(|\mathcal{K}|m\right)\) complexity. The algorithm will need
that \(\mathfrak{R}\) is complete, so if that is not the case we first
need to complete \(\mathfrak{R}\) following the Algorithm 2 of
\citet{MR4178188}, which has a \(O(|\mathcal{K}|m)\) complexity. In the
remainder of this section we assume that \(\mathfrak{R}\) is complete.

We first recall and introduce some notation. Recall that \(\phi\) is the
depth function inside of \(\mathfrak{R}\), that
\(\mathfrak P \subseteq \mathcal P(\mathcal{K})\) is the set of subsets
of \(\mathcal{K}\) that realize a partition, recall the important result
stated by Equation \eqref{eq_vstar_Qpartition}, and that
\(\mathcal{K}^h=\{ k\in\mathcal{K}: \phi(k)=h  \}\) for all
\(1\leq h\leq H\) where \(H=\max_{k\in\mathcal{K}}\phi(k)\). For any
\(t\in\mathbb{N}_m^*\) and \(1\leq h\leq H\), we denote by \(k^{(t,h)}\)
the element of \(\mathcal{K}^h\) such that \(i_t\in R_{k^{(t,h)}}\) if
it exists, and we denote by \(h_{\max}(t)\) the highest \(h\) such that
\(k^{(t,h)}\) exists.

\begin{example}[Continuation of Example~\ref{exm-toy-forest} and
Example~\ref{exm-toy-leaves}]\protect\hypertarget{exm-kth}{}\label{exm-kth}

Assume that the reference family of Example~\ref{exm-toy-forest} has
been labeled as in Example~\ref{exm-toy-leaves} and completed. Let
\((i_1,\dotsc, i_{25})\) such that \(i_1=7\), \(i_2=1\) and \(i_3=24\).
Then for \(t=1\), \(k^{(t,1)}=(1,5)\), \(k^{(t,2)}=(2,3)\),
\(k^{(t,3)}=(3,3)\) and \(h_{\max}(t)=H=3\). For \(t=2\),
\(k^{(t,1)}=(1,5)\), \(k^{(t,2)}=(1,1)\), \(k^{(t,3)}\) does not exist
and \(h_{\max}(t)=2\). For \(t=3\), \(k^{(t,1)}=(8,8)\), \(k^{(t,2)}\)
does not exist and \(h_{\max}(t)=1\).

\end{example}

We will now present the new algorithm and the proof that it computes the
curve \((V^*_{\mathfrak{R}}(S_t))_{t\in\mathbb{N}_m}\). We present two
versions of the algorithm (strictly equivalent): one very formal
(Algorithm~\ref{algo-formal-curve}), to introduce additional notation
used in the proof of Theorem~\ref{thm-curve-path}, and, later, a simpler
version that is the one actually implemented
(Algorithm~\ref{algo-curve}). Recall that a detailed illustration of the
steps of the algorithms will be provided in Section~\ref{sec-example}.

In addition to the computation of all \(V^*_{\mathfrak{R}}(S_t)\),
Algorithm~\ref{algo-formal-curve} also computes partitions
\(\mathcal{P}^t\) that realize the minimum in
\eqref{eq_vstar_Qpartition} for \(V^*_{\mathfrak{R}}(S_t)\). The
initialization of \(\mathcal{P}^t\) has to be done carefully, for that
we let \(\mathcal{K}_0^-=\{k\in\mathcal{K}: \zeta_k=0  \}\) and
\begin{equation}
E=\left\{k\in\mathcal{K}_0^-:\forall k'\in \mathcal{K}_0^-, R_k\subseteq R_{k'}\Rightarrow k'=k \right\},
\label{E}
\end{equation} \begin{equation}
F=\left\{(i,i), 1\leq i\leq N:\forall k\in \mathcal{K}_0^-, R_{(i,i)}\not\subseteq R_k  \right\},
\label{F}
\end{equation} and finally \begin{equation}
\mathcal{P}^0=E\cup F.
\label{cP0}
\end{equation} \(E\) is the set of indices of the maximal elements \(k\)
such that \(\zeta_k=0\), \(F\) is the set of indices of all leaves that
are not a subset of a region indexed by \(E\). \(\mathcal{P}^0\) is the
disjoint union of the two.

\floatstyle{ruled}
\restylefloat{algorithm}
\begin{algorithm}[htb!]
\caption{\label{algo-formal-curve}Formal computation of $(V^*_{\mathfrak{R}}(S_t))_{0\leq t\leq m}$ with a complete family}
\begin{algorithmic}[1]
\Procedure{Curve}{$\mathfrak{R}=(R_{k},\zeta_{k})_{k\in\mathcal{K}}$  with $\mathfrak{R}$ complete, path $(S_t)_{1\leq t \leq m}$ with $S_t=\{i_1, \dotsc, i_t\}$}
  \State $\mathcal{P}^0\gets E\cup F$ \Comment{see \eqref{E} and \eqref{F}} 
  \State $\mathcal{K}^-_0\gets\{k\in\mathcal{K} : \zeta_k=0  \}$
  \State $\eta^0_k\gets0$ for all $k\in\mathcal{K}$
  \For{$t=1,\dotsc, m$}
    \If{$i_t\in\bigcup_{\kappa\in\mathcal{K}^-_{t-1}}R_{\kappa}$}
      \State $\mathcal{P}^t \gets \mathcal{P}^{t-1}$
      \State $\mathcal{K}^-_t \gets \mathcal{K}^-_{t-1}$
      \State $\eta^t_k\gets\eta^{t-1}_k$ for all $k\in\mathcal{K}$
    \Else
      \For{$h=1,\dotsc,h_{\max}(t)$}
        \State $\eta^t_{k^{(t,h)}}\gets\eta^{t-1}_{k^{(t,h)}} + 1$
        \If{$\eta^t_{k^{(t,h)}}<\zeta_k$}
          \State Pass
        \Else
          \State $h^f_t \gets h$ \Comment{final depth}
          \State $\mathcal{P}^t \gets\left( \mathcal{P}^{t-1}\setminus \{ k\in \mathcal{P}^{t-1} : R_k\subseteq R_{k^{(t,h^f_t)}} \}\right)\cup \{ k^{(t,h^f_t)} \}$
          \State $\mathcal{K}^-_t \gets \mathcal{K}^-_{t-1} \cup \{k^{(t,h^f_t)}\}$
          \State Break the loop
        \EndIf
      \EndFor
      \If{the loop has been broken}
        \State $\eta^t_k\gets\eta^{t-1}_k$ for all $k\in\mathcal{K}$ not visited during the loop, that is all $k\not\in\{k^{(t,h)}, 1\leq h\leq h^f_t   \}$
      \Else
        \State $\mathcal{P}^t \gets \mathcal{P}^{t-1}$
        \State $\mathcal{K}^-_t \gets \mathcal{K}^-_{t-1}$
        \State $\eta^t_k\gets\eta^{t-1}_k$ for all $k\in\mathcal{K}$ not visited during the loop, that is all $k\not\in\{k^{(t,h)}, 1\leq h\leq h_{\max}(t)   \}$
      \EndIf
    \EndIf
  \EndFor
  \State\Return $\mathcal{P}^t, \eta^t_k$ for all $t=1,\dotsc, m$ and $k\in\mathcal{K}$
\EndProcedure
\end{algorithmic}
\end{algorithm}
\floatstyle{plain}

The core idea of the algorithm is that, as we increase \(t\) and add new
hypotheses in \(S_t\), we inflate a counter \(\eta_k^t\) for each region
\(R_k\), by 1 if \(i_t\in R_k\) (line 12), by 0 if not (lines 23 and
27), but only until the counter reaches \(\zeta_k\) (line 13). After
this point, the hypotheses in \(R_k\) don't contribute to
\(V^*_{\mathfrak{R}}(S_t)\), we keep track of those hypotheses with
\(\mathcal{K}^-_t\) (line 6), so as soon as
\(\eta^t_{k^{(t,h)}}=\zeta_k\) we update \(\mathcal{K}^-_t\) by adding
\(k^{(t,h)}\) (line 18) to it and we update \(\mathcal{P}^t\)
accordingly (line 17).

We will see in the following Theorem~\ref{thm-curve-path} how this
algorithm allows to compute \(V^*_{\mathfrak{R}}(S_t)\). We first need a
final notation. Let \begin{equation*}
\mathcal{K}_t=\{k\in\mathcal{K}: \exists k'\in \mathcal{P}^t : R_{k'}\subseteq R_k   \}.
\end{equation*} The elements of \(\mathcal{K}_t\) index the regions of
the forest that ``are above'\,' the regions of the current
partition-realizing \(\mathcal{P}^t\). In particular, we always have,
for any \(t\in\mathbb{N}_m\), \(\mathcal{K}^1\subseteq\mathcal{K}_t\)
and \(\mathcal{P}^t\subseteq \mathcal{K}_t\). We can also remark that
the sequence \((\mathcal{K}_t)_{0\leq t \leq m}\) is non-increasing for
the inclusion relation, and that \(\mathcal{K}_0=\mathcal{K}\).

\begin{theorem}[Fast curve
computation]\protect\hypertarget{thm-curve-path}{}\label{thm-curve-path}

Assume that \(\mathfrak{R}\) is complete in the sense of
Section~\ref{sec-forest-structure}.

Let any \(t\in\mathbb{N}_m\). Then, \(\mathcal{P}^t\in\mathfrak P\), and
for all \(k\in\mathcal{K}_t\), we have \begin{equation}
V^*_{\mathfrak{R}}(S_t\cap R_k) = \eta_k^t
\label{eq_vstar_inter_Rk_equal_eta}
\end{equation} and \begin{equation}
V^*_{\mathfrak{R}}(S_t\cap R_k) = \sum_{\substack{k'\in \mathcal{P}^t\\ R_{k'}\subseteq R_k}} \zeta_{k'}\wedge|S_t \cap R_{k'}|.
\label{eq_Pt_good_partition}
\end{equation} Furthermore, \begin{equation}
V^*_{\mathfrak{R}}(S_t)  = \sum_{{k\in \mathcal{P}^t}} \zeta_{k}\wedge|S_t \cap R_{k}|= \sum_{k\in\mathcal{K}^1} \eta_k^t.
\label{eq_vstar_equal_sum_eta}
\end{equation}

\end{theorem}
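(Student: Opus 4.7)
I would proceed by induction on $t\in\mathbb{N}_m$, establishing simultaneously at each step the following invariants: (a) $\mathcal{P}^t\in\mathfrak{P}$; (b) $0\le\eta_k^t\le \zeta_k$ for all $k\in\mathcal{K}$, with $k\in\mathcal{K}^-_t$ if and only if $\eta_k^t=\zeta_k$; and (c) for every $k\in\mathcal{K}_t$, both \eqref{eq_vstar_inter_Rk_equal_eta} and \eqref{eq_Pt_good_partition}. A supporting observation preserved throughout is that every non-atom element of $\mathcal{P}^t$ belongs to $\mathcal{K}^-_t$, since such an element can only have been inserted at a previous saturation event (lines~17--18 of Algorithm~\ref{alg-formal-curve}) and $\mathcal{K}^-_t$ only grows with $t$. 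Consequently, for any $k\in\mathcal{K}_t$ and any $i\in R_k$, the unique $k'\in\mathcal{P}^t$ with $i\in R_{k'}$ is either an atom or a saturated ancestor of $i$, and in both cases the forest structure forces $R_{k'}\subseteq R_k$.

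The base case $t=0$ is direct: $S_0=\varnothing$, $\mathcal{P}^0$ is the atom partition, $\mathcal{K}_0=\mathcal{K}$, every $\eta_k^0=0$, and $\Vstar(\varnothing\cap R_k)=0$ for every $k$. For the inductive step I split on whether $i_t\in\bigcup_{k\in\mathcal{K}^-_{t-1}} R_k$. In the \emph{frozen case}, the algorithm leaves every variable unchanged and $\mathcal{K}_t=\mathcal{K}_{t-1}$, so the only substantial claim is $\Vstar(S_t\cap R_k)=\Vstar(S_{t-1}\cap R_k)$ for $k\in\mathcal{K}_t$. This is trivial when $i_t\notin R_k$; otherwise the unique $k_1\in\mathcal{P}^{t-1}$ with $i_t\in R_{k_1}$ necessarily lies in $\mathcal{K}^-_{t-1}$ (if not, the case hypothesis would give a strictly larger saturated $k_0$ containing $i_t$, whose saturation event would have removed $k_1$ from $\mathcal{P}$), so by induction $\zeta_{k_1}=\eta_{k_1}^{t-1}=\Vstar(S_{t-1}\cap R_{k_1})\le|S_{t-1}\cap R_{k_1}|$, which forces $\zeta_{k_1}\wedge|S\cap R_{k_1}|$ to be unchanged when $i_t$ is added. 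The sum in \eqref{eq_Pt_good_partition} is therefore unchanged, and monotonicity of $\Vstar$ under inclusion of selection sets yields the desired equality.

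The \emph{increment case}, where $i_t$ lies in no saturated region, is the main obstacle. The key substep is the strict increase $\Vstar(S_t\cap R_k)=\Vstar(S_{t-1}\cap R_k)+1$ for every visited ancestor $k=\kth{t}{h}$. The upper bound is the $1$-Lipschitz property of $\Vstar$. For the lower bound I fix any $Q\in\mathfrak{P}$ and analyze the unique $k^*\in Q$ with $i_t\in R_{k^*}$: either $\zeta_{k^*}>|S_{t-1}\cap R_k\cap R_{k^*}|$, in which case the $k^*$-contribution to \eqref{eq_vstar_Qpartition} strictly increases when passing from $S_{t-1}$ to $S_t$; or $\zeta_{k^*}\le|S_{t-1}\cap R_k\cap R_{k^*}|$, in which case I refine $Q$ by replacing $k^*$ with the pieces of $\mathcal{P}^{t-1}$ contained in $R_{k^*}$, producing $Q'\in\mathfrak{P}$ whose cost at time $t-1$ is strictly smaller than $Q$'s cost by $\zeta_{k^*}-\Vstar(S_{t-1}\cap R_{k^*})>0$, using the inductive \eqref{eq_Pt_good_partition} at $k^*$ together with the case hypothesis that no ancestor of $i_t$ is saturated, so $\eta_{k^*}^{t-1}<\zeta_{k^*}$; this contradicts the minimality defining $\Vstar(S_{t-1}\cap R_k)$. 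Once \eqref{eq_vstar_inter_Rk_equal_eta} is established at $t$, the update rules for $\mathcal{P}^t$, $\mathcal{K}^-_t$ and $\eta^t$ reduce \eqref{eq_Pt_good_partition} at $t$ to a direct algebraic check: when a new saturated $k^*$ is created, $\zeta_{k^*}=\eta_{k^*}^t\le|S_t\cap R_{k^*}|$, so the new $\zeta_{k^*}\wedge|S_t\cap R_{k^*}|$ term cleanly replaces the previous sum over pieces of $\mathcal{P}^{t-1}$ inside $R_{k^*}$, which by induction equalled $\Vstar(S_{t-1}\cap R_{k^*})=\zeta_{k^*}-1$, plus the single unit contributed by $i_t$.

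Finally, \eqref{eq_vstar_equal_sum_eta} follows by summing over roots. Since $\mathcal{K}^1\subseteq\mathcal{K}_t$ and the roots partition $\mathbb{N}_m^*$ (forest structure plus completeness), summing \eqref{eq_Pt_good_partition} over $k\in\mathcal{K}^1$ yields $\sum_{k\in\mathcal{K}^1}\Vstar(S_t\cap R_k)=\sum_{k'\in\mathcal{P}^t}\zeta_{k'}\wedge|S_t\cap R_{k'}|$, because each $k'\in\mathcal{P}^t$ lies under a unique root. The remaining identity $\Vstar(S_t)=\sum_{k\in\mathcal{K}^1}\Vstar(S_t\cap R_k)$ follows from \eqref{eq_vstar_Qpartition} by decomposing any $Q\in\mathfrak{P}$ into its restrictions $Q\cap\{k':R_{k'}\subseteq R_k\}$ to each root $k$ and minimizing each piece independently, using that no region in $\mathcal{K}$ strictly contains a root.
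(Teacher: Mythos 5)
Your proposal has the same skeleton as the paper's proof: a strong induction on $t$, the same dichotomy between the frozen step ($i_t$ covered by $\bigcup_{k\in\mathcal{K}^-_{t-1}}R_k$) and the increment step, the same reduction of the frozen step to showing that the piece of $\mathcal{P}^{t-1}$ containing $i_t$ is saturated, and the same root decomposition to pass from \eqref{eq_vstar_inter_Rk_equal_eta}--\eqref{eq_Pt_good_partition} to \eqref{eq_vstar_equal_sum_eta}. The genuine difference is the hardest step, the strict increase $\Vstar(S_t\cap R_{\kth{t}{h}})\ge\Vstar(S_{t-1}\cap R_{\kth{t}{h}})+1$ in the increment case: the paper works on the primal side of \eqref{eq_Vstar_au_max}, recursively building nested maximizers $A_{\phi(\bar k)}\subseteq\dotsb\subseteq A_1$ and checking that $A_\ell\cup\{i_t\}$ stays feasible, whereas you work on the dual side of \eqref{eq_vstar_Qpartition}, isolating the unique $k^*\in Q$ containing $i_t$ and, when its term is capped at $\zeta_{k^*}$, refining $k^*$ into the pieces of $\mathcal{P}^{t-1}$ it contains, whose cost is controlled by the inductive \eqref{eq_Pt_good_partition} together with the slack $\zeta_{k^*}-\eta^{t-1}_{k^*}\ge1$. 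This exchange argument is a real alternative to the paper's witness-set construction and is shorter, at the price of using \eqref{eq_Pt_good_partition} (not only \eqref{eq_vstar_inter_Rk_equal_eta}) inside the key estimate.

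Two points need attention. First, the logic of that key step is garbled as written: you fix an arbitrary $Q\in\mathfrak P$ and then announce a ``contradiction with the minimality defining $\Vstar(S_{t-1}\cap R_k)$'', but an arbitrary $Q$ is not a time-$(t-1)$ minimizer, so there is nothing to contradict, and taking $Q$ to be a time-$(t-1)$ minimizer would not bound the minimum of the time-$t$ cost. The repair is direct and uses only quantities you already display: in the capped case the time-$t$ cost of $Q$ equals its time-$(t-1)$ cost, which is at least the cost of $Q'$ plus $\zeta_{k^*}-\eta^{t-1}_{k^*}\ge1$, hence at least $\Vstar(S_{t-1}\cap R_k)+1$ by \eqref{eq_vstar_Qpartition}; together with the non-capped case this bounds every $Q$, hence $\Vstar(S_t\cap R_k)$. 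Second, in the frozen case your justification that $k_1\in\mathcal{K}^-_{t-1}$ presumes the saturated ancestor $k_0$ entered $\mathcal{K}^-$ through a saturation event that rebuilt $\mathcal{P}$; this is not so when $k_0\in\mathcal{K}^-_0$, i.e.\ a non-atomic region with $\zeta_{k_0}=0$, which lies in $\mathcal{K}^-_t$ from the start without ever entering $\mathcal{P}^t$. In that configuration $k_1$ can be unsaturated and $\zeta_{k_1}\le|S_{t-1}\cap R_{k_1}|$ can fail; for instance with $m=2$, atoms $\{1\},\{2\}$ with $\zeta_{(1,1)}=\zeta_{(2,2)}=1$, the region $\{1,2\}$ with $\zeta_{(1,2)}=0$ and $i_1=1$, equation \eqref{eq_Pt_good_partition} itself fails at $k=(1,1)$, $t=1$. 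To be fair, the paper's proof glosses the same configuration at the same spot (its claim that $k'_{\max}\in\mathcal{P}^t$ in the first subcase), so this is an edge case of the statement rather than a defect of your route; but your one-line argument does not cover it, and you should either exclude non-atomic regions with $\zeta_k=0$ or treat them separately.
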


The proof of this Theorem is given in Section~\ref{sec-proof}. The first
equality of Equation \eqref{eq_vstar_equal_sum_eta} states that the
minimum in \eqref{eq_vstar_Qpartition} is indeed realized on the
partition \(\mathcal{P}^t\), and the last equality of the same Equation
is the basis of the following light corollary.

\begin{corollary}[Easy
computation]\protect\hypertarget{cor-easy-impl}{}\label{cor-easy-impl}

Assume that \(\mathfrak{R}\) is complete in the sense of
Section~\ref{sec-forest-structure}.

For \(t\in\{0,\dotsc, m-1 \}\),
\(V^*_{\mathfrak{R}}(S_{t+1})=V^*_{\mathfrak{R}}(S_{t})\) if
\(i_{t+1}\in \bigcup_{k\in\mathcal{K}^-_t}R_k\), and
\(V^*_{\mathfrak{R}}(S_{t+1})=V^*_{\mathfrak{R}}(S_{t}) + 1\) if not.

\end{corollary}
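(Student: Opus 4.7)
The plan is to invoke directly the last equality of Equation \eqref{eq_vstar_equal_sum_eta} from Theorem \ref{thm-curve-path}, namely $\Vstar(S_t)=\sum_{k\in\mathcal{K}^1}\eta^t_k$, and then read off how the right-hand side changes between steps $t$ and $t+1$ from Algorithm \ref{alg-formal-curve}. All the heavy lifting being absorbed into Theorem \ref{thm-curve-path}, the corollary is really just a matter of bookkeeping on how one iteration of the outer loop updates the counters attached to the roots.

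First I would dispose of the case $i_{t+1}\in\bigcup_{k\in\mathcal{K}^-_t}R_k$: the algorithm enters the first branch at lines 7-9 and copies every $\eta^{t+1}_k$ from $\eta^t_k$, so in particular the sum over $\mathcal{K}^1$ is unchanged and $\Vstar(S_{t+1})=\Vstar(S_t)$.

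Next I would treat the complementary case. The key structural claim is that $i_{t+1}$ belongs to exactly one root $\kth{t+1}{1}\in\mathcal{K}^1$: completeness of $\Rfam$ ensures that the unique atom containing $i_{t+1}$ lies in $\mathcal{K}$, and by the forest property this atom is nested in a unique root. Furthermore $\kth{t+1}{1}\notin\mathcal{K}^-_t$, for otherwise $i_{t+1}$ would already lie in $\bigcup_{k\in\mathcal{K}^-_t}R_k$, contradicting the case assumption. Hence the algorithm enters the else branch; its inner loop starts at $h=1$, and its first iteration executes line 12 and raises $\eta^{t+1}_{\kth{t+1}{1}}$ to $\eta^t_{\kth{t+1}{1}}+1$, irrespective of whether the loop subsequently breaks (at some $h^f_{t+1}$) or runs through all depths without breaking. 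For every other root $k\in\mathcal{K}^1\setminus\{\kth{t+1}{1}\}$, the pairwise disjointness of elements of $\mathcal{K}^1$ (a consequence of the forest structure) gives $i_{t+1}\notin R_k$, so $k$ is not visited during the iteration and $\eta^{t+1}_k=\eta^t_k$. Summing over $\mathcal{K}^1$ yields $\Vstar(S_{t+1})=\Vstar(S_t)+1$.

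The main subtlety to be careful about is precisely the observation that line 12 always executes before the test on line 13, so the depth-$1$ counter is incremented whether the loop breaks immediately or not; once this is spelled out, every other ingredient is either a direct reading of the algorithm or a structural fact about the forest (disjointness of distinct roots, and the fact that completeness forces every index of $\Nm^*$ into a unique root). No nontrivial obstacle remains beyond invoking Theorem \ref{thm-curve-path} and being careful with the loop's control flow.
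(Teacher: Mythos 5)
Your proposal is correct and follows essentially the same route as the paper's own proof: both reduce the corollary to the identity $V^*_{\mathfrak{R}}(S_t)=\sum_{k\in\mathcal{K}^1}\eta_k^t$ from Theorem~\ref{thm-curve-path} and then track how one step of Algorithm~\ref{alg-formal-curve} updates the root counters (unchanged in the first case, incremented only at $k^{(t+1,1)}$ in the second). Your extra remarks on existence and uniqueness of the root containing $i_{t+1}$ and on line 12 executing before the break test are just slightly more explicit versions of what the paper leaves implicit.
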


\begin{proof}
From \eqref{eq_vstar_equal_sum_eta},
\(V^*_{\mathfrak{R}}(S_{t+1})=\sum_{k\in\mathcal{K}^1} \eta_k^{t+1}\)
and \(V^*_{\mathfrak{R}}(S_{t})=\sum_{k\in\mathcal{K}^1} \eta_k^{t}\).
If \(i_{t+1}\in \bigcup_{k\in\mathcal{K}^-_t}R_k\),
\(\eta_k^{t+1}=\eta_k^{t}\) for all \(k\in\mathcal{K}^1\). If not,
\(\eta_k^{t+1}=\eta_k^{t}\) for all \(k\in\mathcal{K}^1\),
\(k\neq k^{(t+1,1)}\), whereas for \(k= k^{(t+1,1)}\),
\(\eta_k^{t+1}=\eta_k^{t}+1\).
\end{proof}

We note that, from Theorem~\ref{thm-curve-path} and
Corollary~\ref{cor-easy-impl}, if one is only interested in the
computation of the curve
\(\left(V^*_{\mathfrak{R}}(S_{t})\right)_{1\leq t\leq m}\), tracking
\(\mathcal{P}^t\) is actually useless, what is important is to track and
update \(\mathcal{K}^-_t\) correctly. Hence the simpler, alternative
Algorithm~\ref{algo-curve}. Note that Algorithm~\ref{algo-curve} is less
formal than Algorithm~\ref{algo-formal-curve}: as in
Algorithm~\ref{algo-vstar} and Algorithm~\ref{algo-pruning}, it recycles
notation (mimicking the actual code implementation) so the \(t\)
subscript or superscript is dropped from all \(\mathcal{K}^-_t\) and
\(\eta_k^t\). In Algorithm~\ref{algo-curve}, the notation \(V_t\) is
actually equal to \(V^*_{\mathfrak{R}}(S_{t})\) by
Corollary~\ref{cor-easy-impl}.

\floatstyle{ruled}
\restylefloat{algorithm}
\begin{algorithm}[htb!]
\caption{\label{algo-curve}Practical computation of $(V^*_{\mathfrak{R}}(S_t))_{0\leq t\leq m}$}
\begin{algorithmic}[1]
\Procedure{Curve}{$\mathfrak{R}=(R_{k},\zeta_{k})_{k\in\mathcal{K}}$ with $\mathfrak{R}$ complete, path $(S_t)_{1\leq t \leq m}$ with $S_t=\{i_1, \dotsc, i_t\}$}
  \State $V_0\gets 0$
  \State $\mathcal{K}^-\gets\{k\in\mathcal{K} : \zeta_k=0  \}$
  \State $\eta_k\gets 0$ for all $k\in\mathcal{K}$
  \For{$t=1,\dotsc, m$}
    \If{$i_t\in\bigcup_{\kappa\in\mathcal{K}^-}R_{\kappa}$}
      \State $V_{t}\gets V_{t-1}$
    \Else
      \For{$h=1,\dotsc,h_{\max}(t)$}
        \State find $k^{(t,h)}\in\mathcal{K}^{h}$ such that $i_t\in R_{k^{(t,h)}}$
        \State $\eta_{k^{(t,h)}}\gets\eta_{k^{(t,h)}} + 1$
        \If{$\eta_{k^{(t,h)}}<\zeta_k$}
          \State pass
        \Else
          \State $\mathcal{K}^- \gets \mathcal{K}^-\cup \{ k^{(t,h)} \}$
          \State break the loop
        \EndIf
      \EndFor
     \State $V_{t}\gets V_{t-1} + 1$
    \EndIf
  \EndFor
  \State\Return $(V_t)_{1\leq t \leq m}$
\EndProcedure
\end{algorithmic}
\end{algorithm}
\floatstyle{plain}

Stocking, for each \(i_t\), the indices \(k\) such that \(i_t\in R_k\),
is done by scanning the forest structure so it has complexity in
\(O(|\mathcal{K}|)\). Once this information is available, finding
\(k^{(t,h)}\) and updating \(\mathcal{K}^-\) can be done in \(O(1)\).
Then each step \(t\) of the \texttt{for} loop consists in two successive
scans of the \(k^{(t,h)}\), \(1\leq h\leq H\), the first to check if
\(i_t\in\bigcup_{k\in\mathcal{K}^-}R_k\), and the second to update the
\(\eta_{k^{(t,h)}}\) if \(i_t\not\in\bigcup_{k\in\mathcal{K}^-}R_k\). So
each step has complexity in \(O(H)\) and finally the complexity of
Algorithm~\ref{algo-curve} is in
\(O(Hm+|\mathcal{K}|)\leq O(|\mathcal{K}|m)\), and even only \(O(Hm)\)
after the first call if the necessary information has been stocked.

\subsection{Illustration on a detailed example}\label{sec-example}

In this section, we follow Algorithm~\ref{algo-formal-curve} during its
first steps in a detailed fashion.

We keep the structure of Example~\ref{exm-toy-forest} and
Example~\ref{exm-toy-leaves}. Recall that \(m=25\),
\(P_{1:5}=R_1 = \{1, \dotsc , 20 \}\), \(P_1=R_2  =  \{1, 2  \}\),
\(P_{2:3}=R_3   =   \{3 , \dotsc , 10 \}\),
\(P_{4:5}=R_4  =    \{11, \dotsc , 20 \}\), \(P_2=\{3,4\}\),
\(P_3=R_5 =  \{5, \dotsc , 10 \}\),
\(P_4=R_6   =     \{11, \dotsc , 16 \}\),
\(P_5=R_7  =   \{17, \dotsc ,20  \}\), \(P_{6:7}=R_8=\{21,22\}\),
\(P_6=\{21\}\), \(P_7=R_9 = \{22\}\) and \(P_8=\{23,24,25\}\).

Now assume that we have the following values for the \(\zeta_k\)'s:
\(\zeta_{(1,5)}=5\), \(\zeta_{(1, 1)}=2\), \(\zeta_{(2, 3)}=0\),
\(\zeta_{(3, 3)}=0\), \(\zeta_{(4, 5)}=4\), \(\zeta_{(4, 4)}=2\),
\(\zeta_{(5, 5)}=3\), \(\zeta_{(6, 7)}=2\), \(\zeta_{(7, 7)}=0\).
Because \(P_2\), \(P_6\) and \(P_8\) come from the completion operation
(see Section~\ref{sec-forest-structure}), we also have
\(\zeta_{(2, 2)}=|P_2|=2\), \(\zeta_{(6, 6)}=|P_6|=1\) and
\(\zeta_{(8, 8)}=|P_8|=3\). Theses values are summarized in
Figure~\ref{fig-zetas}.

\begin{figure}

\centering{

\includegraphics{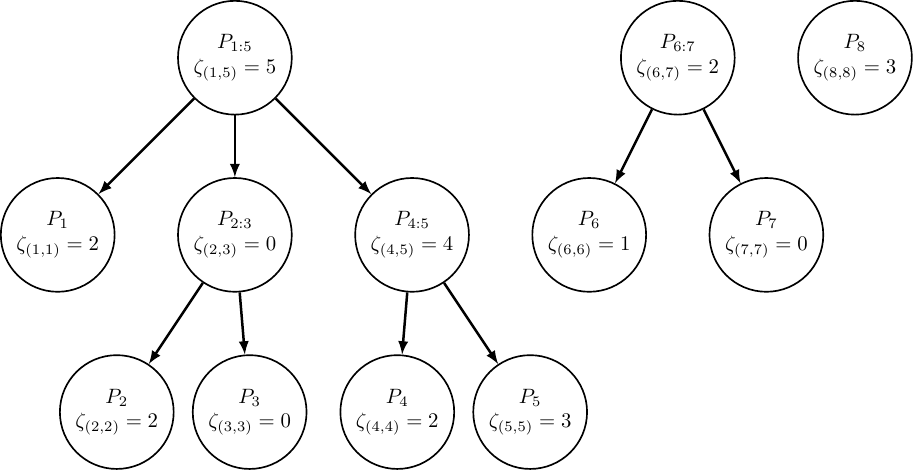}

}

\caption{\label{fig-zetas}The regions of Example~\ref{exm-toy-forest}
with the \(\zeta_k\) values.}

\end{figure}%

We want to compute the curve
\(\left(V^*_{\mathfrak{R}}\left(S_t\right)\right)_{1\leq t\leq 9}\) with
\(S_t=\{i_1,\dotsc, i_t\}\) and \(i_1=11\), \(i_2=17\), \(i_3=12\),
\(i_4=13\), \(i_5=18\), \(i_6=24\), \(i_7=19\), \(i_8=22\) and
\(i_9=5\).

First, we apply Algorithm~\ref{algo-pruning} to the family. This results
in pruning \(P_{6:7}\) (and only this region), because
\(2=\zeta_{(6, 7)}\geq \zeta_{(6, 6)}+\zeta_{(7, 7)}=1+0\). This gives
Figure~\ref{fig-pruned}.

\begin{figure}

\centering{

\includegraphics{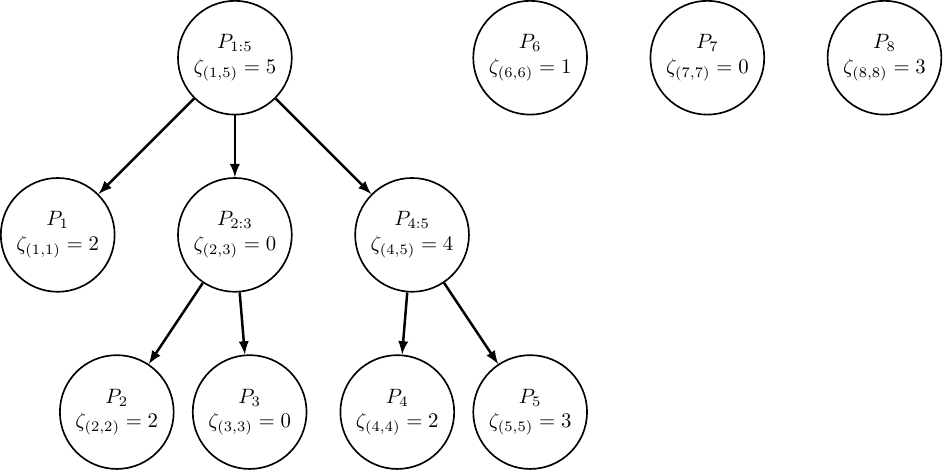}

}

\caption{\label{fig-pruned}The regions of Example~\ref{exm-toy-forest}
after pruning.}

\end{figure}%

Now we initialize Algorithm~\ref{algo-formal-curve}, that is we let
\(t=0\). Because \(\zeta_{(2, 3)}=\zeta_{(3, 3)}=\zeta_{(7,7)}=0\),
\((2,3)\), \((3,3)\) and \((7, 7)\) are added to \(\mathcal{K}^-_t\):
\(\mathcal{K}^-_0=\{(2, 3), (3, 3), (7, 7)\}\). We define
\(\mathcal{P}^0\) according to \eqref{E}, \eqref{F} and \eqref{cP0}.
Here, \(E=\{(2,3), (7,7)\}\) and so
\(F=\{(1,1), (4,4), (5,5), (6,6), (8,8)\}\) and
\(\mathcal{P}^0=E\cup F\). Furthermore, all \(\eta_k^t\) are set to 0.
The initial state of Algorithm~\ref{algo-formal-curve} is shown in
Figure~\ref{fig-t0}, with the elements of \(\mathcal{K}^-_t\) being in
red to show that they will not contribute to the computations, and the
elements of \(\mathcal{P}^t\) as squares.

\begin{figure}

\centering{

\includegraphics{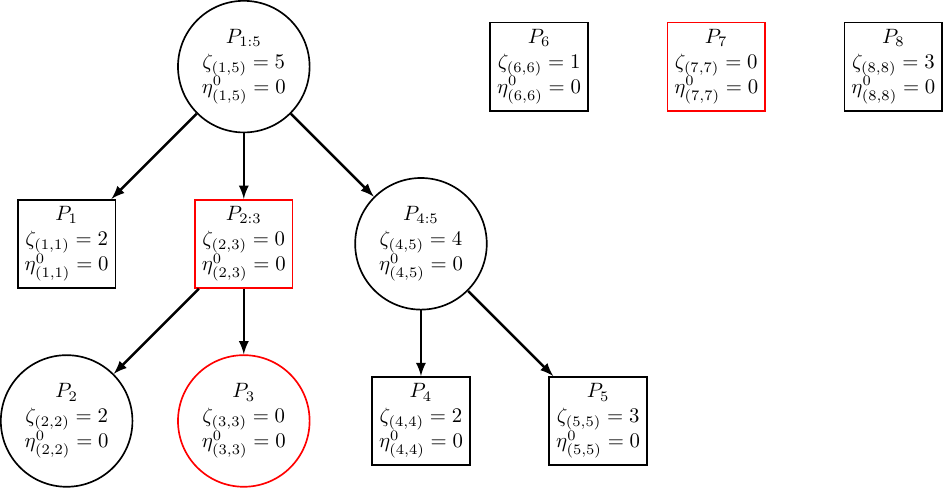}

}

\caption{\label{fig-t0}The regions of Example~\ref{exm-toy-forest} at
\(t=0\) in Algorithm~\ref{algo-formal-curve}.}

\end{figure}%

We move on to \(t=1\), with \(i_1=11\).
\(i_1\in P_4\subseteq P_{4:5}\subseteq P_{1:5}\). The appropriate
\(\eta_k^t\) are increased by one, and by \eqref{eq_vstar_equal_sum_eta}
we have
\(V^*_{\mathfrak{R}}(S_1)=\eta_{(1, 5)}^1+\eta_{(6, 6)}^1+\eta_{(7, 7)}^1+\eta_{(8, 8)}^1=1+0+0+0=1\).
The state of the step is summarized in Figure~\ref{fig-t1}.

\begin{figure}

\centering{

\includegraphics{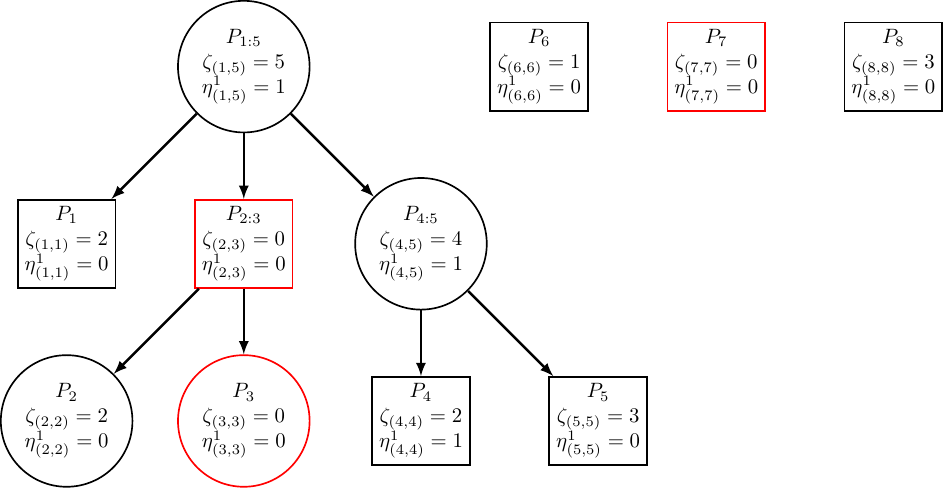}

}

\caption{\label{fig-t1}The regions of Example~\ref{exm-toy-forest} at
\(t=1\) in Algorithm~\ref{algo-formal-curve}.}

\end{figure}%

We move on to \(t=2\), with \(i_2=17\).
\(i_1\in P_5\subseteq P_{4:5}\subseteq P_{1:5}\). The appropriate
\(\eta_k^t\) are increased by one, and by \eqref{eq_vstar_equal_sum_eta}
we have \(V^*_{\mathfrak{R}}(S_2)=2\). The state of the step is
summarized in Figure~\ref{fig-t2}.

\begin{figure}

\centering{

\includegraphics{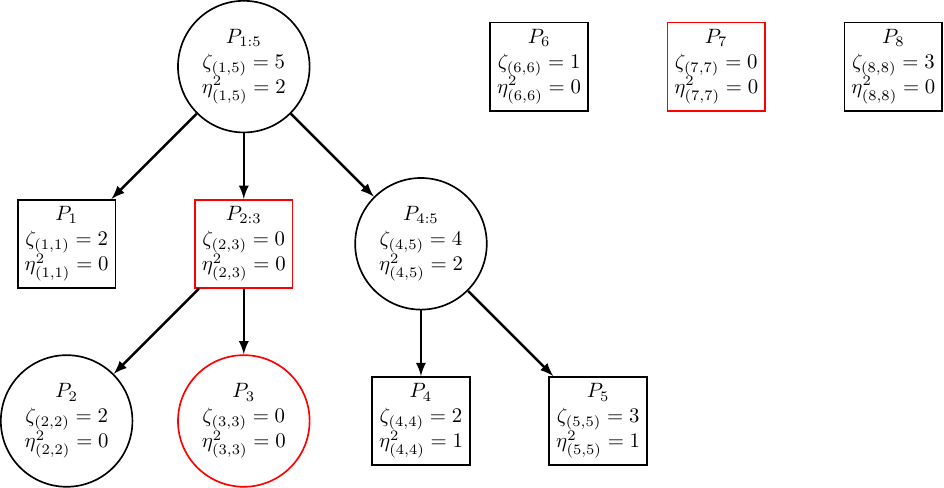}

}

\caption{\label{fig-t2}The regions of Example~\ref{exm-toy-forest} at
\(t=2\) in Algorithm~\ref{algo-formal-curve}.}

\end{figure}%

We move on to \(t=3\), with \(i_3=12\).
\(i_3\in P_4\subseteq P_{4:5}\subseteq P_{1:5}\). The appropriate
\(\eta_k^t\) are increased by one, and we notice that
\(\eta_{(4, 4)}^3=2=\zeta_{(4, 4)}\). So \(P_4\) will stop contributing,
we add it to \(\mathcal{K}^-_t\):
\(\mathcal{K}^-_3=\{(2,3), (3,3), (4, 4), (7, 7)\}\). Following line 17
of Algorithm~\ref{algo-formal-curve}, \(\mathcal{P}^t\) does not change
(we remove then add \((4,4)\) from it) and
\(\mathcal{P}^3=\mathcal{P}^0\). By \eqref{eq_vstar_equal_sum_eta}, we
have \(V^*_{\mathfrak{R}}(S_3)=3\). The state of the step is summarized
in Figure~\ref{fig-t3}, with \(P_4\) now also in red.

\begin{figure}

\centering{

\includegraphics{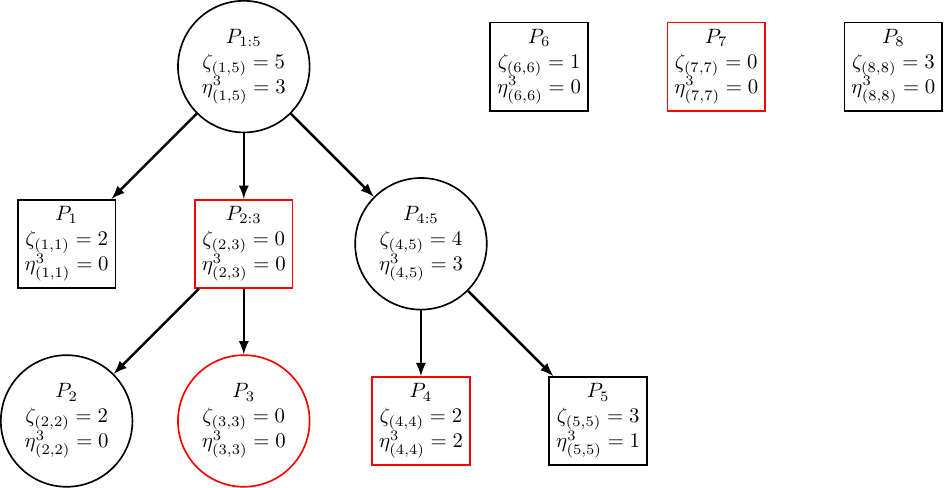}

}

\caption{\label{fig-t3}The regions of Example~\ref{exm-toy-forest} at
\(t=3\) in Algorithm~\ref{algo-formal-curve}.}

\end{figure}%

We move on to \(t=4\), with \(i_4=13\).
\(i_4\in P_4\in \bigcup_{k\in\mathcal{K}^-_3}R_k\). No \(\eta_k^t\) is
increased (see line 9 of Algorithm~\ref{algo-formal-curve}), and by
\eqref{eq_vstar_equal_sum_eta}, we have \(V^*_{\mathfrak{R}}(S_4)=3\).

We move on to \(t=5\), with \(i_5=18\).
\(i_5\in P_5\subseteq P_{4:5}\subseteq P_{1:5}\). We first increase
\(\eta_{(1,5)}^t\): \(\eta_{(1,5)}^5=4<\zeta_{(1,5)}\), then
\(\eta_{(4,5)}^t\): \(\eta_{(4,5)}^5=4\), and we stop there because
\(\eta_{(4,5)}^5=4=\zeta_{(4,5)}\). \(P_{4:5}\) will stop contributing,
we add it to \(\mathcal{K}^-_t\):
\(\mathcal{K}^-_5=\{(2,3), (4,5), (3,3), (4, 4), (7, 7)\}\). We also add
\((4,5)\) and remove \((4,4)\) and \((5,5)\) from \(\mathcal{P}^t\):
\(\mathcal{P}^5=\{(1,1), (2,3), (4,5), (6,6), (7,7), (8,8)\}\). Note
that \(\eta_{(5,5)}^t\) is not updated because we stopped the loop
before, see line 23 of Algorithm~\ref{algo-formal-curve}. By
\eqref{eq_vstar_equal_sum_eta}, we have \(V^*_{\mathfrak{R}}(S_5)=4\).
The state of the step is summarized in Figure~\ref{fig-t5}, with
\(P_{4:5}\) now also in red.

\begin{figure}

\centering{

\includegraphics{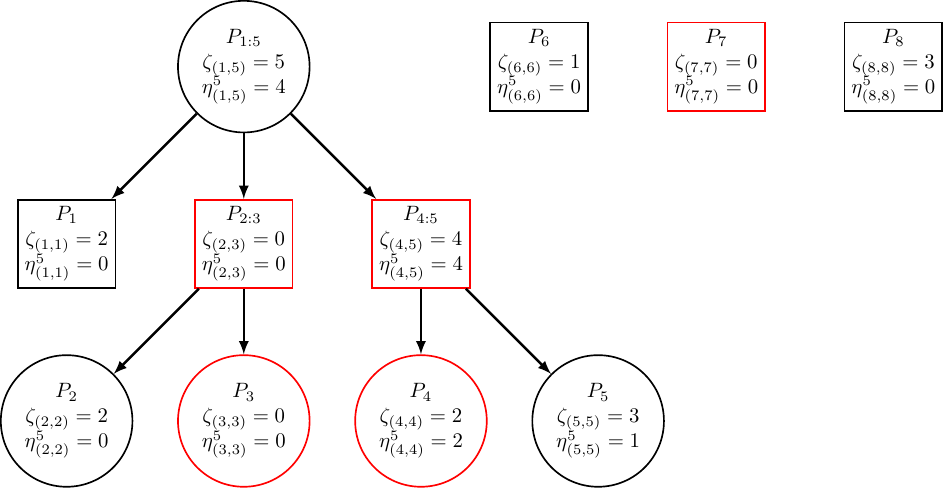}

}

\caption{\label{fig-t5}The regions of Example~\ref{exm-toy-forest} at
\(t=5\) in Algorithm~\ref{algo-formal-curve}.}

\end{figure}%

We move on to \(t=6\), with \(i_6=24\). \(i_6\in P_8\). The appropriate
\(\eta_k^t\) is increased by one: \(\eta_{(8, 8)}^6=1<\zeta_{(8,8)}\),
and by \eqref{eq_vstar_equal_sum_eta} we have
\(V^*_{\mathfrak{R}}(S_6)=\eta_{(1, 5)}^6+\eta_{(6, 6)}^6+\eta_{(7, 7)}^6+\eta_{(8, 8)}^6=4+0+0+1=5\).
The state of the step is summarized in Figure~\ref{fig-t6}.

\begin{figure}

\centering{

\includegraphics{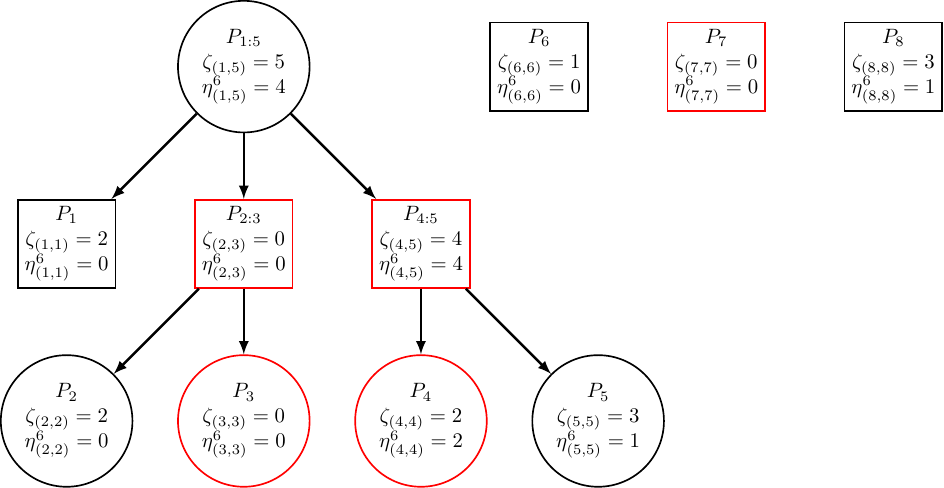}

}

\caption{\label{fig-t6}The regions of Example~\ref{exm-toy-forest} at
\(t=6\) in Algorithm~\ref{algo-formal-curve}.}

\end{figure}%

We move on to the remaining steps. \(i_7=19\in P_{4:5}\),
\(i_8=22\in P_{7}\) and \(i_9=5\in P_{2:3}\) are all in
\(\bigcup_{k\in\mathcal{K}^-_6}R_k\) so no \(\eta_k^t\) is increased at
their step (see line 9 of Algorithm~\ref{algo-formal-curve}), and by
\eqref{eq_vstar_equal_sum_eta}, we have
\(V^*_{\mathfrak{R}}(S_7)=V^*_{\mathfrak{R}}(S_8)=V^*_{\mathfrak{R}}(S_9)=5\).

\section{Implementation}\label{sec-implementation}

All algorithms discussed in this manuscript are already implemented in
the \texttt{R} \citep{R-base} package \texttt{sanssouci}
\citep{sanssouci} which is available on GitHub (see the References for
the link) and is dedicated to the computation of confidence bounds for
the number of false positives. It also hosts the implementation of the
methods described in \citet{MR4124323} and
\citet{10.1093/bioinformatics/btac693}. Algorithm~\ref{algo-vstar} is
implemented as the \texttt{V.star} function,
Algorithm~\ref{algo-pruning} is implemented as the \texttt{pruning}
function, and Algorithm~\ref{algo-curve} is implemented as the
\texttt{curve.V.star.forest.fast} function (whereas the
\texttt{curve.V.star.forest.naive} function just repeatedly calls
\texttt{V.star}). Note that the \texttt{pruning} function has a
\texttt{delete.gaps} option that speeds up the computation even more by
removing unnecessary gaps introduced in the data structure by the
pruning operation, those gaps being due to the specific structure that
is used to store the information of \(\mathcal{K}\).

Speaking of the data structure, we briefly describe it, with an example.
We represent \((R_k)_{k\in\mathcal{K}}\) by two lists, \texttt{C} and
\texttt{leaf\_list}. \texttt{leaf\_list} is a list of vectors, where
\texttt{leaf\_list{[}{[}i{]}{]}} is the vector listing the hypotheses in
the atom \(P_i\). \texttt{C} is a list of lists. For \(1\leq h\leq H\),
\texttt{C{[}{[}h{]}{]}} lists the regions at depth \(h\), using the
index bounds of the atoms they are composed of. That is, the elements of
the list \texttt{C{[}{[}h{]}{]}} are vectors of size two, and if there
is \texttt{k}, \texttt{i} and \texttt{j} such that
\texttt{C{[}{[}h{]}{]}{[}{[}k{]}{]}\ =\ c(i,\ j)}, it means that
\((i, j)\in\mathcal{K}\), or in other words that \(R_{(i,j)}=P_{i:j}\)
is one of the regions, and that \(\phi((i, j))=h\).

\begin{example}[Implementation of
Example~\ref{exm-toy-leaves}]\protect\hypertarget{exm-implementation}{}\label{exm-implementation}

For the reference family given in Example~\ref{exm-toy-forest} and
completed in Example~\ref{exm-toy-leaves}, \(H=3\). For \(h=1\), we have
\texttt{C{[}{[}1{]}{]}{[}{[}1{]}{]}\ =\ c(1,\ 5)},
\texttt{C{[}{[}1{]}{]}{[}{[}2{]}{]}\ =\ c(6,\ 7)},
\texttt{C{[}{[}1{]}{]}{[}{[}3{]}{]}\ =\ c(8,\ 8)}. For \(h=2\), we have
\texttt{C{[}{[}2{]}{]}{[}{[}1{]}{]}\ =\ c(1,\ 1)},
\texttt{C{[}{[}2{]}{]}{[}{[}2{]}{]}\ =\ c(2,\ 3)},
\texttt{C{[}{[}2{]}{]}{[}{[}3{]}{]}\ =\ c(4,\ 5)},
\texttt{C{[}{[}2{]}{]}{[}{[}4{]}{]}\ =\ c(6,\ 6)},
\texttt{C{[}{[}2{]}{]}{[}{[}5{]}{]}\ =\ c(7,\ 7)}. For \(h=3\), we have
\texttt{C{[}{[}3{]}{]}{[}{[}1{]}{]}\ =\ c(2,\ 2)},
\texttt{C{[}{[}3{]}{]}{[}{[}2{]}{]}\ =\ c(3,\ 3)},
\texttt{C{[}{[}3{]}{]}{[}{[}3{]}{]}\ =\ c(4,\ 4)},
\texttt{C{[}{[}3{]}{]}{[}{[}4{]}{]}\ =\ c(5,\ 5)}.

And then for the atoms, we have
\texttt{leaf\_list{[}{[}1{]}{]}\ =\ c(1,\ 2)},
\texttt{leaf\_list{[}{[}2{]}{]}\ =\ c(3,\ 4)},
\texttt{leaf\_list{[}{[}3{]}{]}\ =\ c(5,\ 6,\ 7,\ 8,\ 9,\ 10)},
\texttt{leaf\_list{[}{[}4{]}{]}\ =\ c(11,\ 12,\ 13,\ 14,\ 15,\ 16)},
\texttt{leaf\_list{[}{[}5{]}{]}\ =\ c(17,\ 18,\ 19,\ 20)},
\texttt{leaf\_list{[}{[}6{]}{]}\ =\ 21},
\texttt{leaf\_list{[}{[}7{]}{]}\ =\ 22} and finally
\texttt{leaf\_list{[}{[}8{]}{]}\ =\ c(23,\ 24,\ 25)}.

\end{example}

\begin{tcolorbox}[enhanced jigsaw, left=2mm, coltitle=black, toptitle=1mm, colback=white, bottomtitle=1mm, breakable, title=\textcolor{quarto-callout-caution-color}{\faFire}\hspace{0.5em}{Caution \ref*{cau-impl}: Caution}, titlerule=0mm, arc=.35mm, opacityback=0, opacitybacktitle=0.6, colbacktitle=quarto-callout-caution-color!10!white, toprule=.15mm, bottomrule=.15mm, rightrule=.15mm, colframe=quarto-callout-caution-color-frame, leftrule=.75mm]

\quartocalloutcau{cau-impl} 

We emphasize that the 1D structure of the hypotheses has to be respected
by the user as the current implementation implicitly uses it: that is,
\(P_1\) has to contain the hypotheses labeled \(1, 2, \dotsc, p\),
\(P_2\) has to contain the hypotheses labeled \(p+1, \dotsc\), and so
on. Also, the hypotheses have to be in increasing order:
\texttt{leaf\_list{[}{[}1{]}{]}} has to be equal to
\texttt{c(1,\ 2,\ 3,\ ...,\ p)} and not, say,
\texttt{c(2,\ 1,\ 3,\ ...,\ p)}.

\end{tcolorbox}

\begin{tcolorbox}[enhanced jigsaw, left=2mm, coltitle=black, toptitle=1mm, colback=white, bottomtitle=1mm, breakable, title=\textcolor{quarto-callout-tip-color}{\faLightbulb}\hspace{0.5em}{Tip \ref*{tip-leaves}: Tip}, titlerule=0mm, arc=.35mm, opacityback=0, opacitybacktitle=0.6, colbacktitle=quarto-callout-tip-color!10!white, toprule=.15mm, bottomrule=.15mm, rightrule=.15mm, colframe=quarto-callout-tip-color-frame, leftrule=.75mm]

\quartocallouttip{tip-leaves} 

We see that the current implementation requires to provide a partition
\((P_n)_{1\leq n \leq N}\) of leaves compatible with the reference
family. So, in a way, we always provide a complete family. However, not
all \texttt{c(i,i)} have to be in \texttt{C} for the various algorithms
implemented to function properly. It is not needed for the
implementations of Algorithm~\ref{algo-vstar} and
Algorithm~\ref{algo-pruning} given that, as stated in
Tip~\ref{tip-algo1}, they start by computing \(|S\cap P_n|\) for each
leaf \(P_n\) anyway. Having a complete \texttt{C} is not needed either
for Algorithm~\ref{algo-curve}. Indeed, if \(R_k\) is a region such that
\(\zeta_k=|R_k|\), the condition of line 12 of
Algorithm~\ref{algo-curve} is always true, except for the last element
of \(R_k\) that is added to \(S_t\). At that point, because the elements
of \(R_k\) have been exhausted, \(R_k\) won't ever be visited again, so
adding it to \(\mathcal{K}^-\) is irrelevant. In the end, tracking
\(R_k\) and \(\eta_k\) for such \(k\) was not necessary in the first
place. Furthermore, if the implementation does not find \(k^{(t,h)}\)
(see line 10), it simply pass to the next iteration of the \texttt{for}
loop.

\end{tcolorbox}

The functions \texttt{dyadic.from.leaf\_list},
\texttt{dyadic.from.window.size}, and \texttt{dyadic.from.height} return
the appropriate data structure to represent a \(\mathcal{K}\) that can
be described as a dyadic tree, based on some entry parameters that can
be inferred from the names of the functions. As said in
Tip~\ref{tip-leaves}, the completion of \texttt{C} given
\texttt{leaf\_list} is not necessary, but can be done by the
\texttt{forest.completion} function. Finally, the \(\zeta_k\)'s are
computed as in \citet{MR4178188} by the \texttt{zetas.tree} function
with \texttt{method=zeta.DKWM}. Using \texttt{method=zeta.trivial} just
yields \(\zeta_k=|R_k|\).

The following \texttt{R} snippet constructs the family of
Example~\ref{exm-toy-forest}, draws uniform \(p\)-values
\(p_1,\dotsc,p_{25}\), computes the \(\zeta_k\)'s with
\texttt{method=zeta.trivial}, completes the family with
\texttt{forest.completion} to get the family of
Example~\ref{exm-toy-leaves}, allowing to verify the claim of
Example~\ref{exm-implementation}, prunes it with \texttt{pruning} (which
prunes everything except the leaves because of \texttt{zeta.trivial}),
and finally computes the curve of confidence bounds on the path
\(S_t=\{\sigma(1),\dots,\sigma(t)\}\), using both the pruned and
non-pruned complete family, where \(\sigma\) is a permutation ordering
the \(p\)-values, using the fast implementation
\texttt{curve.V.star.forest.fast}.

\begin{Shaded}
\begin{Highlighting}[]
\FunctionTok{library}\NormalTok{(sanssouci)}

\NormalTok{leaf\_list }\OtherTok{\textless{}{-}} \FunctionTok{list}\NormalTok{(}\FunctionTok{c}\NormalTok{(}\DecValTok{1}\NormalTok{, }\DecValTok{2}\NormalTok{), }
                  \FunctionTok{c}\NormalTok{(}\DecValTok{3}\NormalTok{, }\DecValTok{4}\NormalTok{), }
                  \FunctionTok{c}\NormalTok{(}\DecValTok{5}\NormalTok{, }\DecValTok{6}\NormalTok{, }\DecValTok{7}\NormalTok{, }\DecValTok{8}\NormalTok{, }\DecValTok{9}\NormalTok{, }\DecValTok{10}\NormalTok{), }
                  \FunctionTok{c}\NormalTok{(}\DecValTok{11}\NormalTok{, }\DecValTok{12}\NormalTok{, }\DecValTok{13}\NormalTok{, }\DecValTok{14}\NormalTok{, }\DecValTok{15}\NormalTok{, }\DecValTok{16}\NormalTok{), }
                  \FunctionTok{c}\NormalTok{(}\DecValTok{17}\NormalTok{, }\DecValTok{18}\NormalTok{, }\DecValTok{19}\NormalTok{, }\DecValTok{20}\NormalTok{), }
                  \DecValTok{21}\NormalTok{, }
                  \DecValTok{22}\NormalTok{, }
                  \FunctionTok{c}\NormalTok{(}\DecValTok{23}\NormalTok{, }\DecValTok{24}\NormalTok{, }\DecValTok{25}\NormalTok{))}
\NormalTok{C }\OtherTok{\textless{}{-}} \FunctionTok{list}\NormalTok{(}\FunctionTok{list}\NormalTok{(}\FunctionTok{c}\NormalTok{(}\DecValTok{1}\NormalTok{, }\DecValTok{5}\NormalTok{), }\FunctionTok{c}\NormalTok{(}\DecValTok{6}\NormalTok{,}\DecValTok{7}\NormalTok{)),}
          \FunctionTok{list}\NormalTok{(}\FunctionTok{c}\NormalTok{(}\DecValTok{1}\NormalTok{, }\DecValTok{1}\NormalTok{), }\FunctionTok{c}\NormalTok{(}\DecValTok{2}\NormalTok{, }\DecValTok{3}\NormalTok{), }\FunctionTok{c}\NormalTok{(}\DecValTok{4}\NormalTok{, }\DecValTok{5}\NormalTok{), }\FunctionTok{c}\NormalTok{(}\DecValTok{7}\NormalTok{, }\DecValTok{7}\NormalTok{)),}
          \FunctionTok{list}\NormalTok{(}\FunctionTok{c}\NormalTok{(}\DecValTok{3}\NormalTok{, }\DecValTok{3}\NormalTok{), }\FunctionTok{c}\NormalTok{(}\DecValTok{4}\NormalTok{, }\DecValTok{4}\NormalTok{), }\FunctionTok{c}\NormalTok{(}\DecValTok{5}\NormalTok{, }\DecValTok{5}\NormalTok{)))}
\NormalTok{pvalues }\OtherTok{\textless{}{-}} \FunctionTok{runif}\NormalTok{(}\DecValTok{25}\NormalTok{)}
\NormalTok{o }\OtherTok{\textless{}{-}} \FunctionTok{order}\NormalTok{(pvalues)}
\NormalTok{ZL }\OtherTok{\textless{}{-}} \FunctionTok{zetas.tree}\NormalTok{(C, leaf\_list, zeta.trivial, pvalues, }\AttributeTok{alpha =} \FloatTok{0.05}\NormalTok{)}
\NormalTok{complete.res }\OtherTok{\textless{}{-}} \FunctionTok{forest.completion}\NormalTok{(C, ZL, leaf\_list)}
\FunctionTok{curve.V.star.forest.fast}\NormalTok{(o, complete.res}\SpecialCharTok{$}\NormalTok{C, complete.res}\SpecialCharTok{$}\NormalTok{ZL, leaf\_list)}
\end{Highlighting}
\end{Shaded}

\begin{verbatim}
 [1]  1  2  3  4  5  6  7  8  9 10 11 12 13 14 15 16 17 18 19 20 21 22 23 24 25
\end{verbatim}

\begin{Shaded}
\begin{Highlighting}[]
\NormalTok{pruning.res }\OtherTok{\textless{}{-}} \FunctionTok{pruning}\NormalTok{(complete.res}\SpecialCharTok{$}\NormalTok{C, complete.res}\SpecialCharTok{$}\NormalTok{ZL, leaf\_list, }\AttributeTok{delete.gaps =} \ConstantTok{TRUE}\NormalTok{)}
\FunctionTok{curve.V.star.forest.fast}\NormalTok{(o, pruning.res}\SpecialCharTok{$}\NormalTok{C, pruning.res}\SpecialCharTok{$}\NormalTok{ZL, leaf\_list)}
\end{Highlighting}
\end{Shaded}

\begin{verbatim}
 [1]  1  2  3  4  5  6  7  8  9 10 11 12 13 14 15 16 17 18 19 20 21 22 23 24 25
\end{verbatim}

\section{Numerical experiments}\label{sec-numeric}

In this Section, we present some numerical experiments aiming to
demonstrate the impact of the pruning of Algorithm~\ref{algo-pruning}
(using the \texttt{delete.gaps} option mentioned in
Section~\ref{sec-implementation}) and of the fast
Algorithm~\ref{algo-curve}, in terms of computation time, compared to
the only previously available method to compute a curve of confidence
bounds. As mentioned in Section~\ref{sec-forest-structure} and
Section~\ref{sec-implementation}, this naive method simply consisted in
a \texttt{for} loop repeatedly applying Algorithm~\ref{algo-vstar}.

To compare the computation time, we use the \texttt{R} package
\texttt{microbenchmark} version 1.5.0 \citep{microbenchmark} with
\texttt{R} version 4.4.0 (2024-04-24) and \texttt{sanssouci} version
0.14.1, on a MacBook Air M1 (2020) running macOS 15.1.1. The package
\texttt{microbenchmark} allows to run code snippets a given number
\texttt{n\_repl} of times, and to compute summary statistics on the
computation time. The script executing the computation can be found in
the same repository as this manuscript.

Four scenarios are studied, all based on a common setting which we first
describe. A number \(m\) of hypotheses is tested. We use a reference
family \((R_k,\zeta_k)\) such that the \(R_k\)'s have a forest structure
of maximal depth \(H=10\). The graph of the inclusion relations between
the \(R_k\)'s is a binary tree, hence there are \(2^H-1=1023\) \(R_k\)'s
and in particular \(2^{H-1}=512\) atoms. The \(p\)-values are generated
in a gaussian one-sided fashion (see Example~\ref{exm-gauss}) where
\(H_{0,i}= \{ \mathcal N(\boldsymbol{\mu}, \mathrm{Id})  :\mu_i=0 \}\),
\(H_{1,i}=\{ \mathcal N(\boldsymbol{\mu}, \mathrm{Id}) :\mu_i=4 \}\).
\(\mathcal{H}_1\) is comprised of the leafs 1, 5, 9 and 10, that is
\(\mathcal{H}_1=P_1\cup P_5\cup P_9\cup P_{10}\). For each scenario, the
curve
\(\left(V^*_{\mathfrak{R}}(\{1,\dotsc,t \})\right)_{t\in\mathbb{N}_m^*}\)
is computed. For the experiments including pruning, the pruning is done
once before the \texttt{n\_repl} replications, to mimick the practice
where pruning only needs to be done once and for all, while the user may
be interested in computing multiple bounds and curves after that.

In scenarios 1 and 2, \(m=1024\) (so the atoms are of size 2), in
scenarios 3 and 4, \(m=10240\) (so the atoms are of size 10). In
scenarios 1 and 3, the \(\zeta_k\)'s are estimated trivially by
\(\zeta_k=|R_k|\), and in scenarios 2 and 4, they are computed as in
\citet{MR4178188} with the DKWM inequality \citep{MR0083864, MR1062069}.
Because of the size of \(m\) and the poor performances of the naive
approach, we set \texttt{n\_repl=100} in scenarios 1 and 2 and
\texttt{n\_repl=10} only in scenarios 3 and 4. The differences between
the scenarios are summarized in Table~\ref{tbl-scenarios}.

\begin{longtable}[]{@{}lllll@{}}
\caption{Differences between the
scenarios}\label{tbl-scenarios}\tabularnewline
\toprule\noalign{}
parameter & Scenario 1 & Scenario 2 & Scenario 3 & Scenario 4 \\
\midrule\noalign{}
\endfirsthead
\toprule\noalign{}
parameter & Scenario 1 & Scenario 2 & Scenario 3 & Scenario 4 \\
\midrule\noalign{}
\endhead
\bottomrule\noalign{}
\endlastfoot
\(m\) & 1024 & 1024 & 10240 & 10240 \\
zeta computation & trivial & DKWM & trivial & DKWM \\
\texttt{n\_repl} & 100 & 100 & 10 & 10 \\
\end{longtable}

For the trivial \(\zeta_k\) computation of scenarios 1 and 3, the
pruning obviously deletes all non-atom regions so
\(|\mathcal{K}^{\mathfrak{pr}}|=512\). Whereas, for the particular
instance \(\omega\in\Omega\) in the experiments,
\(|\mathcal{K}^{\mathfrak{pr}}|=541\) for scenario 2, and
\(|\mathcal{K}^{\mathfrak{pr}}|=573\) for scenario 4. Those results
alone illustrate the benefits of pruning with respect to the reduction
of the cardinality of the reference family: the regions above atoms with
no signal (or no detectable signal in the trivial scenarios) are pruned.
The fact that the regions above atoms with detectable signal are not
pruned means that they are relevant for the confidences bounds (which
had already been demonstrated in the simulation study of
\citet{MR4178188}).

The summary statistics of the computation time in each scenario are
presented in Table~\ref{tbl-benchmark01}, Table~\ref{tbl-benchmark02},
Table~\ref{tbl-benchmark03}, and Table~\ref{tbl-benchmark04}, and they
are also presented as boxplots in Figure~\ref{fig-benchmark01}. The time
unit is the second.

\begin{longtable}[]{@{}
  >{\raggedright\arraybackslash}p{(\columnwidth - 14\tabcolsep) * \real{0.2048}}
  >{\raggedleft\arraybackslash}p{(\columnwidth - 14\tabcolsep) * \real{0.1205}}
  >{\raggedleft\arraybackslash}p{(\columnwidth - 14\tabcolsep) * \real{0.1205}}
  >{\raggedleft\arraybackslash}p{(\columnwidth - 14\tabcolsep) * \real{0.1205}}
  >{\raggedleft\arraybackslash}p{(\columnwidth - 14\tabcolsep) * \real{0.1205}}
  >{\raggedleft\arraybackslash}p{(\columnwidth - 14\tabcolsep) * \real{0.1205}}
  >{\raggedleft\arraybackslash}p{(\columnwidth - 14\tabcolsep) * \real{0.1205}}
  >{\raggedleft\arraybackslash}p{(\columnwidth - 14\tabcolsep) * \real{0.0723}}@{}}

\caption{\label{tbl-benchmark01}Scenario 1}

\tabularnewline

\toprule\noalign{}
\begin{minipage}[b]{\linewidth}\raggedright
expr
\end{minipage} & \begin{minipage}[b]{\linewidth}\raggedleft
min
\end{minipage} & \begin{minipage}[b]{\linewidth}\raggedleft
lq
\end{minipage} & \begin{minipage}[b]{\linewidth}\raggedleft
mean
\end{minipage} & \begin{minipage}[b]{\linewidth}\raggedleft
median
\end{minipage} & \begin{minipage}[b]{\linewidth}\raggedleft
uq
\end{minipage} & \begin{minipage}[b]{\linewidth}\raggedleft
max
\end{minipage} & \begin{minipage}[b]{\linewidth}\raggedleft
neval
\end{minipage} \\
\midrule\noalign{}
\endhead
\bottomrule\noalign{}
\endlastfoot
naive.not.pruned & 3.6708287 & 3.8028650 & 3.8149199 & 3.8221756 &
3.8362092 & 3.9022797 & 100 \\
naive.pruned & 3.3147519 & 3.4198975 & 3.4353463 & 3.4470054 & 3.4657886
& 3.5459636 & 100 \\
fast.not.pruned & 0.0035286 & 0.0035779 & 0.0046194 & 0.0036011 &
0.0036321 & 0.1014023 & 100 \\
fast.pruned & 0.0011960 & 0.0012314 & 0.0012535 & 0.0012430 & 0.0012703
& 0.0013603 & 100 \\

\end{longtable}

\begin{longtable}[]{@{}
  >{\raggedright\arraybackslash}p{(\columnwidth - 14\tabcolsep) * \real{0.2024}}
  >{\raggedleft\arraybackslash}p{(\columnwidth - 14\tabcolsep) * \real{0.1190}}
  >{\raggedleft\arraybackslash}p{(\columnwidth - 14\tabcolsep) * \real{0.1190}}
  >{\raggedleft\arraybackslash}p{(\columnwidth - 14\tabcolsep) * \real{0.1190}}
  >{\raggedleft\arraybackslash}p{(\columnwidth - 14\tabcolsep) * \real{0.1190}}
  >{\raggedleft\arraybackslash}p{(\columnwidth - 14\tabcolsep) * \real{0.1190}}
  >{\raggedleft\arraybackslash}p{(\columnwidth - 14\tabcolsep) * \real{0.1310}}
  >{\raggedleft\arraybackslash}p{(\columnwidth - 14\tabcolsep) * \real{0.0714}}@{}}

\caption{\label{tbl-benchmark02}Scenario 2}

\tabularnewline

\toprule\noalign{}
\begin{minipage}[b]{\linewidth}\raggedright
expr
\end{minipage} & \begin{minipage}[b]{\linewidth}\raggedleft
min
\end{minipage} & \begin{minipage}[b]{\linewidth}\raggedleft
lq
\end{minipage} & \begin{minipage}[b]{\linewidth}\raggedleft
mean
\end{minipage} & \begin{minipage}[b]{\linewidth}\raggedleft
median
\end{minipage} & \begin{minipage}[b]{\linewidth}\raggedleft
uq
\end{minipage} & \begin{minipage}[b]{\linewidth}\raggedleft
max
\end{minipage} & \begin{minipage}[b]{\linewidth}\raggedleft
neval
\end{minipage} \\
\midrule\noalign{}
\endhead
\bottomrule\noalign{}
\endlastfoot
naive.not.pruned & 3.7152477 & 3.8110591 & 3.9803535 & 3.8483790 &
3.9549886 & 10.1338336 & 100 \\
naive.pruned & 3.3277028 & 3.4592016 & 3.5465768 & 3.5060270 & 3.6059210
& 5.4159371 & 100 \\
fast.not.pruned & 0.0032789 & 0.0033216 & 0.0067553 & 0.0033482 &
0.0033857 & 0.1978229 & 100 \\
fast.pruned & 0.0013597 & 0.0013884 & 0.0014134 & 0.0014056 & 0.0014298
& 0.0017731 & 100 \\

\end{longtable}

\begin{longtable}[]{@{}
  >{\raggedright\arraybackslash}p{(\columnwidth - 14\tabcolsep) * \real{0.1789}}
  >{\raggedleft\arraybackslash}p{(\columnwidth - 14\tabcolsep) * \real{0.1263}}
  >{\raggedleft\arraybackslash}p{(\columnwidth - 14\tabcolsep) * \real{0.1263}}
  >{\raggedleft\arraybackslash}p{(\columnwidth - 14\tabcolsep) * \real{0.1263}}
  >{\raggedleft\arraybackslash}p{(\columnwidth - 14\tabcolsep) * \real{0.1263}}
  >{\raggedleft\arraybackslash}p{(\columnwidth - 14\tabcolsep) * \real{0.1263}}
  >{\raggedleft\arraybackslash}p{(\columnwidth - 14\tabcolsep) * \real{0.1263}}
  >{\raggedleft\arraybackslash}p{(\columnwidth - 14\tabcolsep) * \real{0.0632}}@{}}

\caption{\label{tbl-benchmark03}Scenario 3}

\tabularnewline

\toprule\noalign{}
\begin{minipage}[b]{\linewidth}\raggedright
expr
\end{minipage} & \begin{minipage}[b]{\linewidth}\raggedleft
min
\end{minipage} & \begin{minipage}[b]{\linewidth}\raggedleft
lq
\end{minipage} & \begin{minipage}[b]{\linewidth}\raggedleft
mean
\end{minipage} & \begin{minipage}[b]{\linewidth}\raggedleft
median
\end{minipage} & \begin{minipage}[b]{\linewidth}\raggedleft
uq
\end{minipage} & \begin{minipage}[b]{\linewidth}\raggedleft
max
\end{minipage} & \begin{minipage}[b]{\linewidth}\raggedleft
neval
\end{minipage} \\
\midrule\noalign{}
\endhead
\bottomrule\noalign{}
\endlastfoot
naive.not.pruned & 336.0473732 & 336.7254511 & 338.6804399 & 337.0286221
& 340.9009506 & 344.4716282 & 10 \\
naive.pruned & 332.4762463 & 332.8188433 & 334.4660587 & 334.1282526 &
335.5376761 & 337.6580202 & 10 \\
fast.not.pruned & 0.0323725 & 0.0324755 & 0.0328789 & 0.0325803 &
0.0328097 & 0.0354455 & 10 \\
fast.pruned & 0.0099485 & 0.0100272 & 0.0101948 & 0.0101886 & 0.0102164
& 0.0107677 & 10 \\

\end{longtable}

\begin{longtable}[]{@{}
  >{\raggedright\arraybackslash}p{(\columnwidth - 14\tabcolsep) * \real{0.1789}}
  >{\raggedleft\arraybackslash}p{(\columnwidth - 14\tabcolsep) * \real{0.1263}}
  >{\raggedleft\arraybackslash}p{(\columnwidth - 14\tabcolsep) * \real{0.1263}}
  >{\raggedleft\arraybackslash}p{(\columnwidth - 14\tabcolsep) * \real{0.1263}}
  >{\raggedleft\arraybackslash}p{(\columnwidth - 14\tabcolsep) * \real{0.1263}}
  >{\raggedleft\arraybackslash}p{(\columnwidth - 14\tabcolsep) * \real{0.1263}}
  >{\raggedleft\arraybackslash}p{(\columnwidth - 14\tabcolsep) * \real{0.1263}}
  >{\raggedleft\arraybackslash}p{(\columnwidth - 14\tabcolsep) * \real{0.0632}}@{}}

\caption{\label{tbl-benchmark04}Scenario 4}

\tabularnewline

\toprule\noalign{}
\begin{minipage}[b]{\linewidth}\raggedright
expr
\end{minipage} & \begin{minipage}[b]{\linewidth}\raggedleft
min
\end{minipage} & \begin{minipage}[b]{\linewidth}\raggedleft
lq
\end{minipage} & \begin{minipage}[b]{\linewidth}\raggedleft
mean
\end{minipage} & \begin{minipage}[b]{\linewidth}\raggedleft
median
\end{minipage} & \begin{minipage}[b]{\linewidth}\raggedleft
uq
\end{minipage} & \begin{minipage}[b]{\linewidth}\raggedleft
max
\end{minipage} & \begin{minipage}[b]{\linewidth}\raggedleft
neval
\end{minipage} \\
\midrule\noalign{}
\endhead
\bottomrule\noalign{}
\endlastfoot
naive.not.pruned & 340.4702704 & 341.4280632 & 344.8181652 & 344.3519587
& 348.6074564 & 350.4574742 & 10 \\
naive.pruned & 337.2238865 & 338.1905987 & 340.4203488 & 340.4743933 &
342.7488030 & 344.1039957 & 10 \\
fast.not.pruned & 0.0294732 & 0.0296390 & 0.0299436 & 0.0298885 &
0.0300172 & 0.0307673 & 10 \\
fast.pruned & 0.0124157 & 0.0126186 & 0.0137188 & 0.0127803 & 0.0130546
& 0.0194847 & 10 \\

\end{longtable}

\begin{figure}

\centering{

\includegraphics{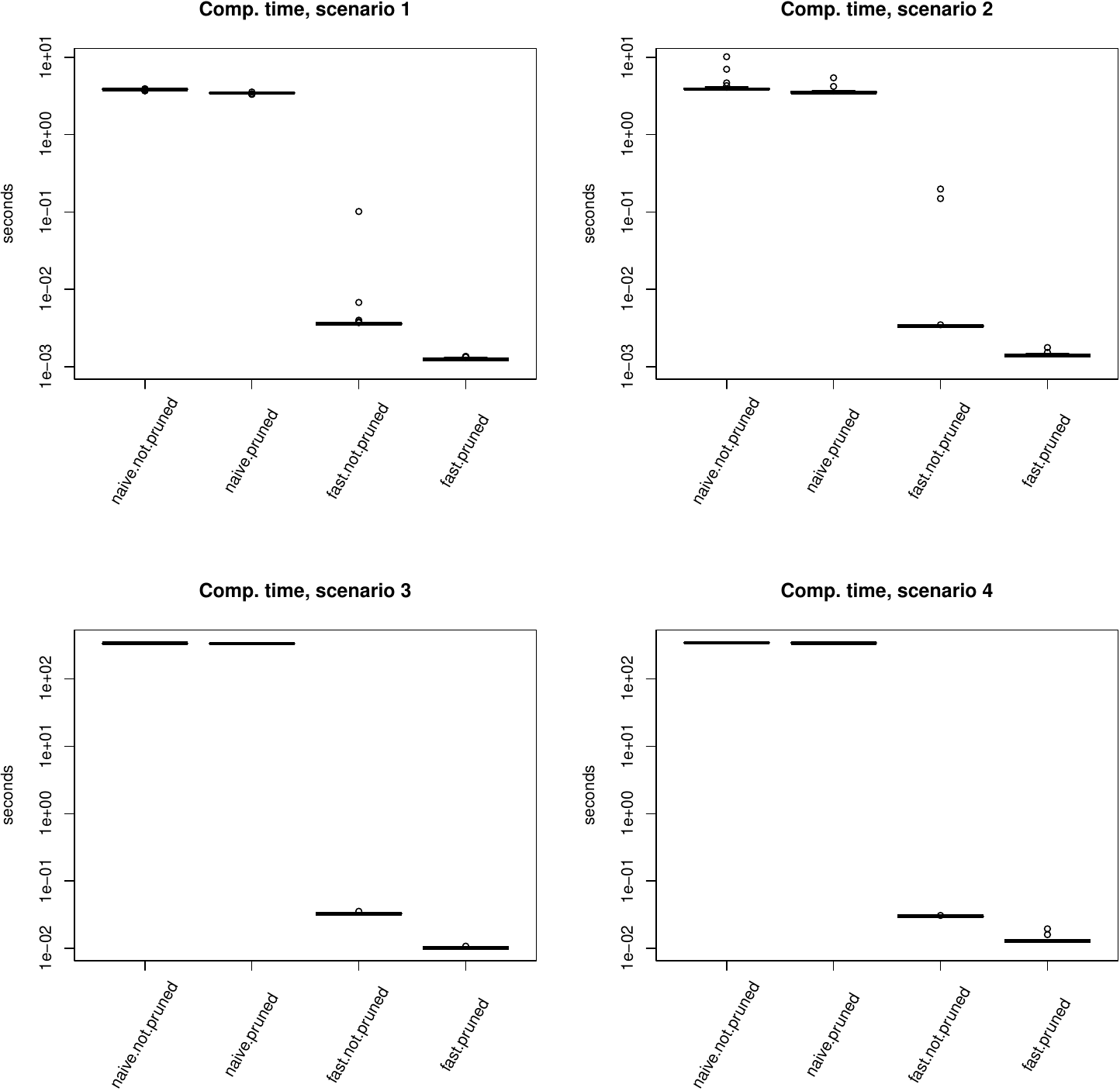}

}

\caption{\label{fig-benchmark01}Computation times in each scenario, in
seconds (using a logarithmic scale)}

\end{figure}%

In each scenario, using the fast algorithm is much faster than the naive
approach, with a speed factor of at least 1000. Using the naive
approach, pruning always gives a slight improvement over not pruning.
Using the fast algorithm, the benefits of pruning are significant, with
a speed factor of at least 2, and sometimes 3.

Comparing scenarios 1 and 2 first, we see that, as expected, there is no
significant change in computation time for \texttt{naive.not.pruned}.
Methods \texttt{naive.pruned} and \texttt{fast.pruned} are faster in
scenario 1, given that we prune more. But, on the other hand,
\texttt{fast.not.pruned} is slightly faster in scenario 2. This is
because, for the regions with signal, said signal is detected and so
those regions are quickly saturated, in the sense that we quickly have
\(\eta_k^t=\zeta_k\) and \(k\) added to \(\mathcal{K}^-_k\), which saves
a lot of time.

The comparison between scenarios 3 and 4 is similar. Although, with only
\texttt{n\_repl=10}, the statistics seem less accurate, this can be
confirmed with additional experiments (\texttt{n\_repl} can also be set
to 100 without problem if we don't include \texttt{naive} methods).

Finally, comparing scenarios 3 \& 4 with scenarios 1 \& 2, we see that
multiplying the number of hypotheses by 10 effectively multiplies the
computation time by \(\sim10\) when using Algorithm~\ref{algo-curve} and
by \(\sim100\) when using Algorithm~\ref{algo-vstar} naively, which
illustrates the stated complexities of \(O(|\mathcal{K}|m)\) and
\(O(|\mathcal{K}|m^2)\), respectively.

\section{Conclusion}\label{sec-conclusion}

In conclusion, we effectively introduced a new algorithm to compute a
curve of confidence upper bounds for the false discoveries, or,
equivalently, for the FDP, that is much faster than the previous
alternative, with one power of \(m\) less in the complexity. This
algorithm can be applied as soon as the confidence upper bound is built
according to the JER framework, when the reference family exhibit a
forest structure.

To develop new confidence upper bound methods and test them on
simulations, it was previously not conceivable to replicate experiments
a sufficient number of times while computing whole curves. For instance,
in the simulation study of \citet{MR4178188}, the number of replications
chosen was 10 and the whole curve was not computed, only ten values
along the curve were computed, for an \texttt{m} set to 12800, that is
0.078\% of the curve had been computed. Now, simulation studies with an
adequate number of replications and 100\% of the curve become feasible.

A lot of work remains to be done on the \texttt{sanssouci} package. For
example, to make the data format of a forest structure
\((R_k)_{k\in\mathcal{K}}\) less convoluted and more user-friendly is an
interesting project. Another one would be to implement inside the
package the methods of the paper \citet{blain22notip}, which are
currently only available in the Python language \citep{10.5555/1593511},
and the methods of the paper \citet{JMLR:v25:23-1025}.

Other current works include the development of new reference families
with theoretically proven JER control that could better account for
realistic models, such as models with dependence between the
\(p\)-values, see for example \citet{perrot2023selective}, or models
with discreteness.

\section{Proofs}\label{sec-proofs}

\subsection{\texorpdfstring{Proofs of
Section~\ref{sec-pruning}}{Proofs of Section~}}\label{sec-pruning-proofs}

\subsubsection{\texorpdfstring{Proof of
Proposition~\ref{prp-pruning}}{Proof of Proposition~}}\label{sec-pruning-proofs-pruning}

Recall Equation \eqref{eq_vstar_Q} and, because
\(\mathfrak{R}^{\mathfrak{pr}}\) also has a forest structure,
\begin{equation}
V^*_{\mathfrak{R}^{\mathfrak{pr}}}(S)=\min_{Q\subseteq\mathcal{K}^{\mathfrak{pr}}}\left(\sum_{k'\in Q}\zeta_{k'}\wedge|S\cap R_{k'}|+\left| S\setminus\bigcup_{k'\in Q} R_{k'}   \right|\right),
\label{eq_vstarpruned_Q}
\end{equation} so we immediately get that
\(V^*_{\mathfrak{R}}(S)\leq V^*_{\mathfrak{R}^{\mathfrak{pr}}}(S)\).

Let any \(Q\subseteq \mathcal{K}\). We split \(Q\) in \(A\) elements of
\(\mathcal{K}\setminus\mathcal{K}^{\mathfrak{pr}}\), denoted
\((i_{0,a}, i_{p_a,a}-1)\), \(1\leq a\leq A\) for some \(p_a\geq2\), and
\(B\) elements of \(\mathcal{K}^{\mathfrak{pr}}\), simply denoted
\(k_b\), \(1\leq b\leq B\). By the definition of
\(\mathcal{K}^{\mathfrak{pr}}\) and the previous remarks, for any
\(1\leq a \leq A\), there exist integers \(i_{1,a},\dotsc,i_{p_a-1,a}\)
such that \(i_{0,a}<i_{1,a}<\dotsb<i_{p_a-1,a} < i_{p_a,a}\),
\((i_{j-1,a},i_{j,a}-1)\in\mathcal{K}^{\mathfrak{pr}}\) for all
\(1\leq j\leq p_a\), and
\(\zeta_{(i_{0,a}, i_{p_a,a}-1)}\geq \sum_{j=1}^{p_a}\zeta_{(i_{j-1,a},i_{j,a}-1)}\).
Now let \begin{equation}
Q^{\mathfrak{pr}}=\{k_b : 1\leq b\leq B \} \cup \{ (i_{j-1,a},i_{j,a}-1) :  1\leq a\leq A, 1\leq j\leq p_a  \}.
\label{eq_Qpr}
\end{equation} We have that
\(Q^{\mathfrak{pr}}\subseteq \mathcal{K}^{\mathfrak{pr}}\) and
\(\bigcup_{k\in Q}R_k=\bigcup_{k\in Q^{\mathfrak{pr}}}R_k\). Then,
\begin{align*}
\sum_{k\in Q}\zeta_k\wedge|S\cap R_k|+\left| S\setminus\bigcup_{k\in Q} R_k   \right|&=\sum_{b=1}^B\zeta_{k_b}\wedge|S\cap R_{k_b}| \\
&\qquad+\sum_{a=1}^A\zeta_{(i_{0,a}, i_{p_a,a}-1)}\wedge |S\cap R_{(i_{0,a}, i_{p_a,a}-1)}| \\
&\qquad+ \left| S\setminus\bigcup_{k\in Q} R_k   \right|    , 
\end{align*} but for all \(1\leq a\leq A\), \begin{align*}
\zeta_{(i_{0,a}, i_{p_a,a}-1)}&\geq  \sum_{j=1}^{p_a}\zeta_{(i_{j-1,a},i_{j,a}-1)} \\
&\geq   \sum_{j=1}^{p_a}\zeta_{(i_{j-1,a},i_{j,a}-1)} \wedge |S\cap R_{(i_{j-1,a}, i_{j,a}-1)}| ,
\end{align*} so the term
\(\sum_{a=1}^A\zeta_{(i_{0,a}, i_{p_a,a}-1)}\wedge |S\cap R_{(i_{0,a}, i_{p_a,a}-1)}|\)
is greater than or equal to \begin{equation*}
\sum_{a=1}^A\left(  \sum_{j=1}^{p_a}\zeta_{(i_{j-1,a},i_{j,a}-1)} \wedge |S\cap R_{(i_{j-1,a}, i_{j,a}-1)}| \right)\wedge |S\cap R_{(i_{0,a}, i_{p_a,a}-1)}| ,
\end{equation*} which is simply equal to \begin{equation*}
 \sum_{a=1}^A  \sum_{j=1}^{p_a}\zeta_{(i_{j-1,a},i_{j,a}-1)} \wedge |S\cap R_{(i_{j-1,a}, i_{j,a}-1)}|.
\end{equation*} Furthermore
\(\left|S\setminus\bigcup_{k\in Q} R_k\right|= \left|S\setminus\bigcup_{k\in Q^{\mathfrak{pr}}} R_k\right|\)
so finally: \begin{align}
\sum_{k\in Q}\zeta_k\wedge|S\cap R_k|+\left| S\setminus\bigcup_{k\in Q} R_k   \right| &\geq \sum_{k\in Q^{\mathfrak{pr}}}\zeta_k\wedge|S\cap R_k|+\left| S\setminus\bigcup_{k\in Q^{\mathfrak{pr}}} R_k   \right|   \label{pruning_ineq}\\
&\geq V^*_{\mathfrak{R}^{\mathfrak{pr}}}(S). \notag
\end{align} Note that Equation \eqref{pruning_ineq} is true even if
there are some
\(b\in\{ 1,\dotsc,B\}, a\in\{ 1,\dotsc,A\}, j\in\{ 1,\dotsc,p_a\}\) such
that \(k_b=(i_{j-1,a}, i_{j,a}-1)\). We minimize over all \(Q\) to get
that
\(V^*_{\mathfrak{R}}(S)\geq V^*_{\mathfrak{R}^{\mathfrak{pr}}}(S)\).

\subsubsection{\texorpdfstring{Proof of
Proposition~\ref{prp-pruning-correct}}{Proof of Proposition~}}\label{sec-pruning-proofs-pruning-correct}

First,
\(\mathcal{K}\setminus\mathcal{L}\subseteq\mathcal{K}\setminus\mathcal{K}^{\mathfrak{pr}}\)
is trivial: a \(k\) such that
\(\zeta_{k} \geq  \sum_{k'\in Succ_k} Vec_{k'}\) obviously satisfies the
condition of Definition~\ref{def-pruning} to be pruned.

Now let \((i,i')\in \mathcal{K}\setminus\mathcal{K}^{\mathfrak{pr}}\) an
element that is pruned by Definition~\ref{def-pruning}, so there exists
\(p\geq2\) and integers \(i_1,\dotsc,i_{p-1}\) such that, when setting
\(i_0=i\) and \(i_{p}=i'+1\), the sequence \((i_0,\dotsc,i_{p})\) is
strictly increasing, \((i_{j-1},i_{j}-1)\in\mathcal{K}\) for all
\(1\leq j\leq p\) and finally
\(\zeta_{(i,i')}=\zeta_{(i_0,i_{p}-1)}\geq \sum_{j=1}^{p} \zeta_{(i_{j-1}, i_{j}-1)}\).
Then by the proof of Theorem 1 of \citet{MR4178188} but applied to
\(S=R_{(i,i')}\) we have that
\(\sum_{j=1}^{p} \zeta_{(i_{j-1}, i_{j}-1)}\geq  \sum_{k'\in Succ_{(i,i')}} Vec_{k'}\)
(see the unnumbered line just above Equation (A4) in that paper) and so
\(\zeta_{(i,i')}\geq \sum_{k'\in Succ_{(i,i')}} Vec_{k'}\) hence
\((i,i')\) is pruned by Algorithm~\ref{algo-pruning} and
\(\mathcal{K}\setminus\mathcal{K}^{\mathfrak{pr}}\subseteq\mathcal{K}\setminus\mathcal{L}\).

In the end,
\(\mathcal{K}\setminus\mathcal{K}^{\mathfrak{pr}}=\mathcal{K}\setminus\mathcal{L}\)
so \(\mathcal{K}^{\mathfrak{pr}}=\mathcal{L}\).

\subsection{\texorpdfstring{Proof of
Theorem~\ref{thm-curve-path}}{Proof of Theorem~}}\label{sec-proof}

In this section, every reference to a line is a reference to a line of
Algorithm~\ref{algo-formal-curve}.

\subsubsection{\texorpdfstring{Derivation of
\eqref{eq_vstar_equal_sum_eta}}{Derivation of }}\label{derivation-of}

We first derive \eqref{eq_vstar_equal_sum_eta} from
\eqref{eq_vstar_inter_Rk_equal_eta} and \eqref{eq_Pt_good_partition}.
First note that for all \(Q\in\mathfrak P\), \begin{equation}
Q=\bigcup_{k\in\mathcal{K}^1}\{ k' \in Q : R_{k'}\subseteq R_k\}
\label{eq_part_K1}
\end{equation} and the union is disjoint. From
\eqref{eq_vstar_Qpartition}, let \(Q^*\in\mathfrak P\) such that
\(V^*_{\mathfrak{R}}(S_t) = \sum_{k'\in Q^*} \zeta_{k'}\wedge |S_t\cap R_{k'}|\).
Then by \eqref{eq_part_K1}, \begin{align}
V^*_{\mathfrak{R}}(S_t) &= \sum_{k'\in Q^*} \zeta_{k'}\wedge |S_t\cap R_{k'}|\notag\\
&=\sum_{k\in\mathcal{K}^1}  \sum_{\substack{k'\in Q^*\\ R_{k'}\subseteq R_k}} \zeta_{k'}\wedge |S_t\cap R_{k'}|\notag \\
&=\sum_{k\in\mathcal{K}^1}  \sum_{\substack{k'\in Q^*\\ R_{k'}\subseteq R_k}} \zeta_{k'}\wedge |S_t\cap (R_{k}\cap R_{k'})| \notag\\
&=\sum_{k\in\mathcal{K}^1}  \sum_{k'\in Q^*} \zeta_{k'}\wedge |(S_t\cap R_{k})\cap R_{k'}| \label{eq_delicate} \\
&\geq \sum_{k\in\mathcal{K}^1} V^*_{\mathfrak{R}}(S_t\cap R_k),\label{eq_delicate_vstar}
\end{align} where the equality in \eqref{eq_delicate} comes from the
fact that if \(R_{k'}\not\subseteq R_k\), then
\(R_{k'}\cap R_k=\varnothing\), that is, \(R_{k}\subsetneq R_{k'}\) is
impossible because \(k\in\mathcal K^1\). Furthermore,
\eqref{eq_delicate_vstar} holds again by \eqref{eq_vstar_Qpartition}.

Because \(\mathcal{K}^1\subseteq\mathcal{K}_t\), by
\eqref{eq_Pt_good_partition},
\(V^*_{\mathfrak{R}}(S_t\cap R_k) = \sum_{\substack{k'\in \mathcal{P}^t\\ R_{k'}\subseteq R_k}} \zeta_{k'}\wedge|S_t \cap R_{k'}|\)
for all \(k\in\mathcal{K}^1\). Then, \begin{align*}
 \sum_{k\in\mathcal{K}^1} V^*_{\mathfrak{R}}(S_t\cap R_k)&=  \sum_{k\in\mathcal{K}^1}\sum_{\substack{k'\in \mathcal{P}^t\\ R_{k'}\subseteq R_k}} \zeta_{k'}\wedge|S_t \cap R_{k'}|\\
 &=\sum_{k\in\mathcal{P}^t} \zeta_{k}\wedge |S_t\cap R_{k}| \text{ by \eqref{eq_part_K1}}\\
 &\geq V^*_{\mathfrak{R}}(S_t) \text{ by \eqref{eq_vstar_Qpartition}}.\\
\end{align*} So we proved that
\(V^*_{\mathfrak{R}}(S_t)= \sum_{k\in\mathcal{P}^t} \zeta_{k}\wedge |S_t\cap R_{k}|= \sum_{k\in\mathcal{K}^1} V^*_{\mathfrak{R}}(S_t\cap R_k)\)
and finally
\(V^*_{\mathfrak{R}}(S_t)=\sum_{k\in\mathcal{K}^1} V^*_{\mathfrak{R}}(S_t\cap R_k)= \sum_{k\in\mathcal{K}^1}  \eta_k^t\)
by \eqref{eq_vstar_inter_Rk_equal_eta}, again because
\(\mathcal{K}^1\subseteq\mathcal{K}_t\). Every equality in
\eqref{eq_vstar_equal_sum_eta} is proven.

\subsubsection{\texorpdfstring{Proof that
\(\mathcal{P}^t\in\mathfrak P\)}{Proof that \textbackslash mathcal\{P\}\^{}t\textbackslash in\textbackslash mathfrak P}}\label{proof-that-mathcalptinmathfrak-p}

By completeness, \(F\subseteq \mathcal{K}\) and so
\(\mathcal{P}^0\subseteq \mathcal{K}\). First we show that
\(\mathbb{N}_m^*=\bigcup_{k\in\mathcal{P}^0}R_k\). Let
\(j\in\mathbb{N}_m^*\) and \(i\in \mathbb{N}_N^*\) such that
\(j\in R_{(i,i)}\). Let
\(G=\left\{k\in\mathcal{K}_0^-: j\in R_k\right\}\). If
\(G=\varnothing\), for any \(k\in\mathcal{K}_0^-\),
\(R_{(i,i)}\subseteq R_k\) would imply that \(j\in R_k\) and \(k\in G\),
hence a contradiction, and so \((i,i)\in F\) and
\(j\in \bigcup_{k\in\mathcal{P}^0}R_k\). If \(G\neq\varnothing\), for
any \(k,k'\in G\), \(j\in R_{k}\cap R_{k'}\) so, by forest structure,
\(R_{k}\subseteq R_{k'}\) or \(R_{k'}\subseteq R_{k}\), hence
\(\subseteq\) is a total order on the finite, non-empty set
\(\{R_k : k\in G\}\), so the latter has a maximum and there exists a
unique \(k^*\in G\) such that \(R_{k^*}=\max_{k\in G}R_k\). Let us show
that, as an element of \(\mathcal{K}_0^-\), \(k^*\) is maximal, which
will imply that \(k^*\in E\) and that
\(j\in \bigcup_{k\in\mathcal{P}^0}R_k\). Let any
\(k'\in \mathcal{K}_0^-\) such that \(R_{k^*}\subseteq R_{k'}\), then
\(j\in R_{k'}\), so \(k'\in G\) and so
\(R_{k'}\subseteq \max_{k\in G}R_k=R_{k^*}\). Hence \(R_{k^*}= R_{k'}\)
and \(k^*=k'\) (see Remark~\ref{rem-distinct}), \(k^*\) is indeed
maximal, \(k^*\in E\) and \(j\in \bigcup_{k\in\mathcal{P}^0}R_k\). This
proves that \(\mathbb{N}_m^*=\bigcup_{k\in\mathcal{P}^0}R_k\).

Now let us prove that the elements \(\mathcal{P}^0\) index disjoints
sets. Let \(k, k'\in \mathcal{P}^0\) such that there exists
\(j\in R_{k}\in R_{k'}\). By forest structure, \(R_{k}\subseteq R_{k'}\)
or \(R_{k'}\subseteq R_{k}\). Now we separate four cases. The case
\(k\in F, k'\in E\) is impossible, because \(k\) would be the index of
an atom, so we would imperatively have \(R_{k}\subseteq R_{k'}\), which
would contradict the definition of \(F\). Similarly, \(k\in F', k\in E\)
is impossible. If \(k, k'\in E\), then \(k=k'\) by the very definition
of \(E\). Finally, if \(k\in F, k'\in F\), \(R_k\) and \(R_{k'}\) are
both atoms, and because the atoms realize a partition of
\(\mathbb{N}_m^*\), then \(k=k'\). In all cases, \(k=k'\) which
concludes.

The \(R_k, k\in \mathcal{P}^0\), are disjoint, non-empty (see
Remark~\ref{rem-distinct}), and cover \(\mathbb{N}_m^*\), so they form a
partition of \(\mathbb{N}_m^*\), in other words,
\(\mathcal{P}^0\in\mathfrak P\).

We then show that \(\mathcal{P}^t\in\mathfrak P\) by recursion. We just
showed that \(\mathcal{P}^0\in\mathfrak P\). Let
\(t\in\{ 0,\dotsc,m-1\}\) and assume that
\(\mathcal{P}^t\in\mathfrak P\). In many cases,
\(\mathcal{P}^{t+1}=\mathcal{P}^t\) and so
\(\mathcal{P}^{t+1}\in\mathfrak P\) by the recursion hypothesis.
Otherwise, \(\mathcal{P}^{t+1}\) is given by the adjustment in line 17,
in which case we have \begin{equation}\label{eq_rel_rec_p}
\mathcal{P}^{t+1}=\left(\mathcal{P}^t \setminus\{k\in\mathcal{P}^t, R_k\subseteq R_{  k^{(t+1,h^f_{t+1})}   }   \}  \right)\cup \{ k^{(t+1,h^f_{t+1})} \}.
\end{equation} Note that this imply that
\(i_{t+1}\not\in\bigcup_{\kappa \in \mathcal{K}^-_t} R_{\kappa}\) (see
lines 6 and 10).

Let \(j\in\mathbb{N}_m^*\) and \(k\in\mathcal{P}^t\) such that
\(j\in R_k\). If \(R_k\subseteq R_{  k^{(t+1,h^f_{t+1})}   }\), then
\(j\in R_{  k^{(t+1,h^f_{t+1})}   }\subseteq \bigcup_{\kappa\in \mathcal{P}^{t+1}} R_{\kappa}\),
and if not, \(k\in\mathcal{P}^{t+1}\) and again
\(j\in \bigcup_{\kappa\in \mathcal{P}^{t+1}} R_{\kappa}\).

Now let \(k,k'\in\mathcal{P}^{t+1}\), \(k\neq k'\). If both are
different from \(k^{(t+1,h^f_{t+1})}\), then they are both in
\(\mathcal{P}^t\in\mathfrak P\) so \(R_k\cap R_{k'}=\varnothing\).
Assume that \(k'=k^{(t+1,h^f_{t+1})}\). By the forest structure,
\(R_k\cap R_{k'}=\varnothing\), or \(R_{k^{(t+1,h^f_{t+1})}}\), or
\(R_k\). By \eqref{eq_rel_rec_p}, the latter is impossible. It remains
to show that \(R_k\cap R_{k'}=R_{  k^{(t+1,h^f_{t+1})}}\) is also not
possible, in other words that we can't have
\(R_{  k^{(t+1,h^f_{t+1})}} \subsetneq  R_{k}\). If that was the case,
because of the strict inclusion, we would have \(k\not\in F\) because
\(R_{k}\) could not be an atom, so we would have either
\(k\in E\subseteq \mathcal{K}^-_0 \subseteq \mathcal{K}^-_t\) or
\(k\in\mathcal{P}^t\setminus\mathcal{P}^0\). In the second case, \(k\)
would have been added to \(\mathcal{P}^{t'}\) at a previous step
\(t'\leq t\) of the algorithm, but in that case it would also have been
added to \(\mathcal{K}_{t'}^-\subseteq \mathcal{K}_{t}^-\) (see lines 17
and 18). So in the end, in both cases, we would have \begin{equation*}
i_{t+1}\in R_{  k^{(t+1,h^f_{t+1})}}  \subsetneq  R_{k} \subseteq \bigcup_{\kappa\in\mathcal{K}^-_{t}}R_{\kappa}
\end{equation*} which is a contradiction with the fact that
\(i_{t+1}\not\in\bigcup_{\kappa \in \mathcal{K}^-_t} R_{\kappa}\), and
so \(R_k\cap R_{k'}=\varnothing\), and finally
\(\mathcal{P}^{t+1}\in\mathfrak P\).

\subsubsection{\texorpdfstring{Proof of
\eqref{eq_vstar_inter_Rk_equal_eta} and
\eqref{eq_Pt_good_partition}}{Proof of  and }}\label{proof-of-and}

We show the remainder of the statements with a strong recursion over
\(t\). We have \(\mathcal{P}^0\in\mathfrak P\) by previous section, and
given that \(S_0=\varnothing\) and \(\eta^0_k=0\) for all
\(k\in\mathcal{K}\) (recall that \(\mathcal{K}_0=\mathcal{K}\)),
everything is equal to 0 in \eqref{eq_vstar_inter_Rk_equal_eta} and
\eqref{eq_Pt_good_partition}.

So we let \(t\in\{ 0,\dotsc,m-1\}\), and assume that
\(\mathcal{P}^{t'}\in\mathfrak P\) and that
\eqref{eq_vstar_inter_Rk_equal_eta} and \eqref{eq_Pt_good_partition}
hold for all \(t'\leq t\). In all the following, \(\bar k\) is the
element of \(\mathcal{P}^t\) such that \(i_{t+1}\in R_{\bar k}\). We
will distinguish two cases: if
\(i_{t+1}\in\bigcup_{\kappa\in\mathcal{K}^-_{t}}R_{\kappa}\) or not.
First we show an inequality that will be used in both cases. We have,
for all \(k\in\mathcal{K}_t\), \begin{align}
V^*_{\mathfrak{R}}(S_{t+1}\cap R_k)&\leq \sum_{\substack{k'\in\mathcal{P}^{t}\\R_{k'}\subseteq R_k}} \zeta_{k'}\wedge |S_{t+1}\cap R_{k'}|.\label{eq_ineq_both_cases}
\end{align} Indeed, by \eqref{eq_vstar_Qpartition}, \begin{align*}
V^*_{\mathfrak{R}}(S_{t+1}\cap R_k)&\leq  \sum_{k'\in\mathcal{P}^{t}} \zeta_{k'}\wedge |S_{t+1}\cap R_k \cap R_{k'}|.
\end{align*} For any \(k'\in\mathcal{P}^{t}\), we have either
\(R_{k'}\cap R_k=\varnothing\), in which case
\(|S_{t+1}\cap R_k \cap R_{k'}|=0\), either \(R_{k'}\subseteq R_k\), in
which case \(|S_{t+1}\cap R_k \cap R_{k'}|=|S_{t+1} \cap R_{k'}|\), but
\(R_{k}\subsetneq R_{k'}\) is impossible. Indeed, by definition of
\(\mathcal{K}_t\), there exists \(\tilde k\in \mathcal{P}^t\) such that
\(R_{\tilde k}\subseteq R_k\), so \(R_{k}\subsetneq R_{k'}\) would
entail \(R_{\tilde k}\subsetneq R_{k'}\) which is impossible since
\(k', \tilde k\in \mathcal{P}^t\in\mathfrak P\) and so \(R_{\tilde k}\)
and \(R_{k'}\) are part of a partition of \(\mathbb{N}_m^*\). This gives
\eqref{eq_ineq_both_cases}.

\paragraph{\texorpdfstring{First case:
\(i_{t+1}\in\bigcup_{\kappa\in\mathcal{K}^-_{t}}R_{\kappa}\)}{First case: i\_\{t+1\}\textbackslash in\textbackslash bigcup\_\{\textbackslash kappa\textbackslash in\textbackslash mathcal\{K\}\^{}-\_\{t\}\}R\_\{\textbackslash kappa\}}}\label{first-case-i_t1inbigcup_kappainmathcalk-_tr_kappa}

In this case, \(\mathcal{P}^{t+1}=\mathcal{P}^t\) and
\(\mathcal{K}_{t+1}=\mathcal{K}_t\). For any \(k\in\mathcal{K}_{t+1}\)
such that \(i_{t+1}\not\in R_k\) (or, equivalently, such that
\(S_{t+1}\cap R_k=S_t\cap R_k\)), \begin{align*}
\sum_{\substack{k'\in\mathcal{P}^{t+1} \\ R_{k'}\subseteq R_k}} \zeta_{k'}\wedge |S_{t+1}\cap R_{k'}|&=\sum_{\substack{k'\in\mathcal{P}^{t} \\ R_{k'}\subseteq R_k}} \zeta_{k'}\wedge |S_{t}\cap R_{k'}| \\
&=V^*_{\mathfrak{R}}(S_t\cap R_k) \text{ by~\eqref{eq_Pt_good_partition}}\\
&=\eta_k^t \text{ by~\eqref{eq_vstar_inter_Rk_equal_eta}}\\
&=\eta_k^{t+1}
\end{align*} because \(\eta_k^t=\eta_k^{t+1}\) for all
\(k\in\mathcal{K}\). Furthermore \(S_{t+1}\cap R_k=S_t\cap R_k\) so
\(V^*_{\mathfrak{R}}(S_{t+1}\cap R_k)=V^*_{\mathfrak{R}}(S_t\cap R_k)\).
So everything is proved for such a \(k\).

Now we let \(k\in\mathcal{K}_{t+1}\) such that \(i_{t+1}\in R_k\) or,
equivalently, such that \(R_{\bar k}\subseteq R_k\). We first need to
show that \(\zeta_{\bar k}\leq |S_t\cap R_{\bar k}|\), and for that we
need to distinguish two subcases: if \(\bar k\) has been added to
\(\mathcal{P}^t\) during a previous step of the algorithm (see line 17),
or if not. Note that \(\bar k\) being added to \(\mathcal{P}^t\) during
a previous step means that there exists \(t'\), \(1\leq t'\leq t\), such
that \(h^f_{t'}\) is defined and \(\bar k= k^{(t',h^f_{t'})}\). The
contrary means that for all \(t'\), \(1\leq t'\leq t\) such that
\(h^f_{t'}\) is defined, \(\bar k\neq k^{(t',h^f_{t'})}\), and also that
\(\bar k\in\mathcal{P}^{t'}\) for all \(t'\in\{0,\dotsc,t\}\).

\subparagraph{\texorpdfstring{First subcase: \(\bar k\) has never been
added during the process of line
17}{First subcase: \textbackslash bar k has never been added during the process of line 17}}\label{first-subcase-bar-k-has-never-been-added-during-the-process-of-line-17}

As remarked just above, \(\bar k\in\mathcal{P}^{0}=E\cup F\). Our goal
is to show that \(\bar k\not\in F\). This will imply that
\(\bar k\in E\subseteq \mathcal{K}_0^-\) and so that
\(\zeta_{\bar k}=0\leq| S_{t}\cap R_{\bar k} |\) as desired.

Let us assume that \(\bar k\in F\) and find a contradiction. \(\bar k\)
is then the index of an atom that contains \(i_{t+1}\), and
\(i_{t+1}\in\bigcup_{\kappa\in\mathcal{K}^-_{t}}R_{\kappa}\), so there
exists \(k'\in\mathcal{K}^-_{t}\) such that \(i_{t+1}\in R_{k'}\) and
then, by atomicity, \(R_{\bar k}\subseteq R_{k'}\). By definition of
\(F\), \(k'\) cannot be in \(\mathcal{K}_0^-\), so there exists
\(t'\in\{1,\dotsc,t\}\) such that \(k'\) has been added to
\(\mathcal{K}^-_{t'}\) by the process of line 18, and so, by lines 16
and 17, \(k'=k^{(t',h^f_{t'})}\) and \(k'\in\mathcal{P}^{t'}\). But we
also have \(\bar k\in\mathcal{P}^{t'}\), and so by \(\mathcal{P}^{t'}\)
realizing a partition, the inclusion \(R_{\bar k}\subseteq R_{k'}\) is
actually an egality, so \(\bar k = k' = k^{(t',h^f_{t'})}\), which is
not possible by assumption of this first subcase.

\subparagraph{\texorpdfstring{Second subcase: \(\bar k\) has been added
to \(\mathcal{P}^t\) at a previous
step}{Second subcase: \textbackslash bar k has been added to \textbackslash mathcal\{P\}\^{}t at a previous step}}\label{second-subcase-bar-k-has-been-added-to-mathcalpt-at-a-previous-step}

Let \(t'\leq t\) be this step. This means that
\(\bar k = k^{(t',h^f_{t'})}\) and that at that step
\(\eta^{t'}_{\bar k}\geq \zeta_{\bar k}\), because the if condition in
line 13 failed. Also \(\bar k\in \mathcal{P}^{t'}\) so
\(\bar k\in \mathcal{K}_{t'}\) so we can write \begin{align*}
\zeta_{\bar k}&\leq\eta^{t'}_{\bar k}\\
&\quad=V^*_{\mathfrak{R}}(S_{t'}\cap R_{\bar k})  \text{ by~\eqref{eq_vstar_inter_Rk_equal_eta}} \\
&\quad\leq | S_{t'}\cap R_{\bar k} |\\
&\quad\leq  | S_{t}\cap R_{\bar k} |.\\
\end{align*}

This concludes the two subcases dichotomy:
\(\zeta_{\bar k}\leq |S_t\cap R_{\bar k}|\) and we can go back to our
\(k\in\mathcal{K}_{t+1}\) such that \(i_{t+1}\in R_k\) and
\(R_{\bar k}\subseteq R_k\).

\subparagraph{End of the first case}\label{end-of-the-first-case}

We write the following chain of claims: \begin{align*}
V^*_{\mathfrak{R}}(S_{t+1}\cap R_k)&\leq \sum_{\substack{k'\in\mathcal{P}^{t}\\R_{k'}\subseteq R_k}} \zeta_{k'}\wedge |S_{t+1}\cap R_{k'}|\text{ by~\eqref{eq_ineq_both_cases} and }\mathcal{K}_{t+1}\subseteq\mathcal{K}_t\\
&=\sum_{\substack{k'\in\mathcal{P}^{t}\\R_{k'}\subseteq R_k\\ k'\neq\bar k}} \zeta_{k'}\wedge |S_{t+1}\cap R_{k'}| + \zeta_{\bar k}\wedge |S_{t+1}\cap R_{\bar k}| \\
&=\sum_{\substack{k'\in\mathcal{P}^{t}\\R_{k'}\subseteq R_k\\ k'\neq\bar k}} \zeta_{k'}\wedge |S_{t}\cap R_{k'}| + \zeta_{\bar k}\wedge( |S_{t}\cap R_{\bar k}| +1)\\
&=\sum_{\substack{k'\in\mathcal{P}^{t}\\R_{k'}\subseteq R_k\\ k'\neq\bar k}} \zeta_{k'}\wedge |S_{t}\cap R_{k'}| + \zeta_{\bar k}\wedge|S_{t}\cap R_{\bar k}|\text{ because $\zeta_{\bar k}\leq |S_t\cap R_{\bar k}|$}\\
&=\sum_{\substack{k'\in\mathcal{P}^{t}\\R_{k'}\subseteq R_k}} \zeta_{k'}\wedge |S_{t}\cap R_{k'}|=\sum_{\substack{k'\in\mathcal{P}^{t+1}\\R_{k'}\subseteq R_k}} \zeta_{k'}\wedge |S_{t}\cap R_{k'}| \\
&=V^*_{\mathfrak{R}}(S_t\cap R_k)  \text{ by~\eqref{eq_Pt_good_partition}} \\
&= \eta_k^t  \text{ by~\eqref{eq_vstar_inter_Rk_equal_eta}} \\
&= \eta_k^{t+1}.\\
\end{align*} But on the other hand, \(S_t\subseteq S_{t+1}\) and so
\eqref{eq_vstar_Qpartition} also gives
\(V^*_{\mathfrak{R}}(S_t\cap R_k) \leq V^*_{\mathfrak{R}}(S_{t+1}\cap R_k)\)
and so in the end we have the desired outcome: \begin{equation*}
V^*_{\mathfrak{R}}(S_{t+1}\cap R_k) =  \eta_k^{t+1} =  \sum_{\substack{k'\in\mathcal{P}^{t+1}\\R_{k'}\subseteq R_k}} \zeta_{k'}\wedge |S_{t+1}\cap R_{k'}| ,
\end{equation*} which concludes this first case.

\paragraph{\texorpdfstring{Second case:
\(i_{t+1}\not\in\bigcup_{\kappa\in\mathcal{K}^-_{t}}R_{\kappa}\)}{Second case: i\_\{t+1\}\textbackslash not\textbackslash in\textbackslash bigcup\_\{\textbackslash kappa\textbackslash in\textbackslash mathcal\{K\}\^{}-\_\{t\}\}R\_\{\textbackslash kappa\}}}\label{second-case-i_t1notinbigcup_kappainmathcalk-_tr_kappa}

Like in the first case, considering a
\(k\in\mathcal{K}_{t+1}\subseteq \mathcal{K}_t\) such that
\(i_{t+1}\not\in R_k\) is not problematic, because in that case \(k\) is
not visited at all by the algorithm at step \(t+1\) :
\(\eta^{t+1}_k=\eta^{t}_k\),
\(\{k'\in\mathcal{P}^{t+1}\,:\,R_{k'}\subseteq R_k\}=\{k'\in\mathcal{P}^{t}\,:\,R_{k'}\subseteq R_k\}\),
and for all \(k'\in \mathcal{K}\) such that \(R_{k'}\subseteq R_k\),
\(S_{t+1}\cap R_{k'}=S_{t}\cap R_{k'}\). Hence, from \begin{equation*}
V^*_{\mathfrak{R}}(S_{t}\cap R_k) =  \eta_k^{t} =  \sum_{\substack{k'\in\mathcal{P}^{t}\\R_{k'}\subseteq R_k}} \zeta_{k'}\wedge |S_{t+1}\cap R_{k'}| ,
\end{equation*} we directly have \begin{equation*}
V^*_{\mathfrak{R}}(S_{t+1}\cap R_k) =  \eta_k^{t+1} =  \sum_{\substack{k'\in\mathcal{P}^{t+1}\\R_{k'}\subseteq R_k}} \zeta_{k'}\wedge |S_{t+1}\cap R_{k'}| .
\end{equation*}

So we now focus on the \(k\in\mathcal{K}_{t+1}\) such that
\(i_{t+1}\in R_k\). Note that for such \(k\), \begin{equation*}
\eta^{t+1}_k=\eta^t_k+1=V^*_{\mathfrak{R}}(S_t\cap R_k)+1=\sum_{\substack{k'\in \mathcal{P}^t\\ R_{k'}\subseteq R_k}}\zeta_{k'}\wedge|S_t\cap R_{k'}|+1
\end{equation*} by construction, by \eqref{eq_vstar_inter_Rk_equal_eta}
and by \eqref{eq_Pt_good_partition}. Indeed, such a \(k\) is equal to a
\(k^{(t+1,h)}\) with \(h\leq h_{max}(t+1)\), and even
\(h\leq h^f_{t+1}\) if the latter exists.

Also, similarly to the first case, for all \(k\in\mathcal{K}_{t+1}\)
such that \(i_{t+1}\in R_k\) (recall that this is equivalent to
\(R_{\bar k}\subseteq R_k\)), we can write: \begin{align}
V^*_{\mathfrak{R}}(S_{t+1}\cap R_k)&\leq \sum_{\substack{k'\in\mathcal{P}^{t}\\R_{k'}\subseteq R_k}} \zeta_{k'}\wedge |S_{t+1}\cap R_{k'}| \text{ by \eqref{eq_ineq_both_cases} and }\mathcal{K}_{t+1}\subseteq\mathcal{K}_t\notag\\
&=\sum_{\substack{k'\in\mathcal{P}^{t}\\R_{k'}\subseteq R_k\\ k'\neq\bar k}} \zeta_{k'}\wedge |S_{t+1}\cap R_{k'}| + \zeta_{\bar k}\wedge |S_{t+1}\cap R_{\bar k}| \notag\\
&=\sum_{\substack{k'\in\mathcal{P}^{t}\\R_{k'}\subseteq R_k\\ k'\neq\bar k}} \zeta_{k'}\wedge |S_{t}\cap R_{k'}| + \zeta_{\bar k}\wedge( |S_{t}\cap R_{\bar k}| +1)\notag\\
&\leq \sum_{\substack{k'\in\mathcal{P}^{t}\\R_{k'}\subseteq R_k\\ k'\neq\bar k}} \zeta_{k'}\wedge |S_{t}\cap R_{k'}| + \zeta_{\bar k}\wedge|S_{t}\cap R_{\bar k}| +1\notag\\
&=\sum_{\substack{k'\in\mathcal{P}^{t}\\R_{k'}\subseteq R_k}} \zeta_{k'}\wedge |S_{t}\cap R_{k'}|  + 1\notag\\
&=V^*_{\mathfrak{R}}(S_t\cap R_k) +1  \text{ by \eqref{eq_Pt_good_partition}}.\label{eq_ineq_}
\end{align}

Note that by the joint construction of \(\mathcal{K}^-_t\) and
\(\mathcal{P}^t\) on lines 17 and 18, the fact that
\(i_{t+1}\not\in\bigcup_{\kappa\in\mathcal{K}^-_{t}}R_{\kappa}\) implies
that \(\bar k\in F\), so \(\bar k\) is the index of an atom, so actually
\(h_{\max}(t+1)=\phi(\bar k)\), \(\bar k = k^{(t+1,\phi(\bar k))}\) and
the \(R_k\), \(k\in\mathcal{K}_{t}\), such that
\(R_{\bar k}\subseteq R_k\) are nested and are exactly indexed by the
\(k^{(t+1,h)}\), \(1\leq h\leq \phi(\bar k)\). We now prove that for all
of them,
\(V^*_{\mathfrak{R}}(S_{t+1}\cap R_k)\geq V^*_{\mathfrak{R}}(S_{t}\cap R_k)+1\),
which will be true in particular for the ones that are in
\(\mathcal{K}_{t+1}\), given that
\(\mathcal{K}_{t+1}\subseteq \mathcal{K}_t\). We do that by constructing
some sets \(A_h\) with good properties with a descending recursion on
\(h\), starting from \(\phi(\bar k)\). We only give the first two steps
of the construction, because every other step is exactly the same as the
second one, which contains the recursive arguments. We go back to the
real definition of \(V^*_{\mathfrak{R}}\) to do so, for any
\(S\subseteq \mathbb{N}_m\): \begin{equation}
\label{eq_Vstar_au_max} V^*_{\mathfrak{R}}(S)=\max_{\substack{A\subseteq \mathbb{N}_m\\\forall k'\in\mathcal{K}, |A\cap R_{k'}|\leq \zeta_{k'}}} |A\cap S| =\max_{\substack{A\subseteq S\\\forall k'\in\mathcal{K}, |A\cap R_{k'}|\leq \zeta_{k'}}} |A|  . 
\end{equation}

By \eqref{eq_Vstar_au_max}, we have that
\(V^*_{\mathfrak{R}}(S_t \cap R_{k^{(t+1,\phi(\bar k))}})=|A_{\phi(\bar k)}|\)
for a given
\(A_{\phi(\bar k)}\subseteq S_t \cap R_{k^{(t+1,\phi(\bar k))}}\) and
such that \(|A_{\phi(\bar k)}\cap R_{k'}|\leq \zeta_{k'}\) for all
\(k'\in\mathcal{K}\). Now for the second set, we construct
\(A_{\phi(\bar k)-1}\). Note that
\(V^*_{\mathfrak{R}}(S_t \cap R_{k^{(t+1,\phi(\bar k)-1)}})=|B|\) for
some \(B\subseteq S_t \cap R_{k^{(t+1,\phi(\bar k)-1)}}\) and such that
\(|B\cap R_{k'}|\leq \zeta_{k'}\) for all \(k'\in\mathcal{K}\). By
reductio ad absurdum, if there are strictly less than
\(V^*_{\mathfrak{R}}(S_t \cap R_{k^{(t+1,\phi(\bar k)-1)}}) - V^*_{\mathfrak{R}}(S_t \cap R_{k^{(t+1,\phi(\bar k))}})=|B|-|A_{\phi(\bar k)}|\)
elements in
\(S_t\cap R_{k^{(t+1,\phi(\bar k)-1)}} \setminus S_t\cap R_{k^{(t+1,\phi(\bar k))}}\),
then
\(|B|+|S_t \cap R_{k^{(t+1,\phi(\bar k))}}|-|S_t \cap R_{k^{(t+1,\phi(\bar k)-1)}}|>|A_{\phi(\bar k)}|=V^*_{\mathfrak{R}}(S_t \cap R_{k^{(t+1,\phi(\bar k))}})\).
Given that
\(B\cup (S_t\cap R_{k^{(t+1,\phi(\bar k))}})\subseteq S_t \cap R_{k^{(t+1,\phi(\bar k)-1)}}\),
this entails
\(|B\cap S_t\cap R_{k^{(t+1,\phi(\bar k))}}| =|B|+|S_t \cap R_{k^{(t+1,\phi(\bar k))}}| -| B\cup (S_t\cap R_{k^{(t+1,\phi(\bar k))}})| > V^*_{\mathfrak{R}}(S_t \cap R_{k^{(t+1,\phi(\bar k))}})\)
which contradicts the maximality of \(A_{\phi(\bar k)}\) in
\eqref{eq_Vstar_au_max}.

So we construct \(A_{\phi(\bar k)-1}\) by taking the disjoint union of
\(A_{\phi(\bar k)}\) and
\(V^*_{\mathfrak{R}}(S_t \cap R_{k^{(t+1,\phi(\bar k)-1)}}) - V^*_{\mathfrak{R}}(S_t \cap R_{k^{(t+1,\phi(\bar k))}})\)
elements of
\(S_t\cap R_{k^{(t+1,\phi(\bar k)-1)}} \setminus S_t\cap R_{k^{(t+1,\phi(\bar k))}}\).
We now establish the properties of \(A_{\phi(\bar k)-1}\). First,
\(A_{\phi(\bar k)-1}\subseteq S_t \cap R_{k^{(t+1,\phi(\bar k)-1)}}\),
and
\(|A_{\phi(\bar k)-1}|=V^*_{\mathfrak{R}}(S_t \cap R_{k^{(t+1,\phi(\bar k)-1)}})\).
For all \(k'\in\mathcal{K}\) such that
\(R_{k^{(t+1,\phi(\bar k)-1)}} \cap R_{k'}=\varnothing\), we have
\(|A_{\phi(\bar k)-1}\cap R_{k'}|=0\leq \zeta_k'\). Furthermore,
\begin{align*}
|A_{\phi(\bar k)-1}\cap R_{   k^{(t+1,\phi(\bar k))}   }|&=|A_{\phi(\bar k)}\cap R_{   k^{(t+1,\phi(\bar k))}   }|\\
&\leq \zeta_{ k^{(t+1,\phi(\bar k))} }
\end{align*} by construction of \(A_{\phi(\bar k)}\). Finally, for all
\(k'\) such that \(R_{k^{(t+1,\phi(\bar k)-1)}}\subseteq R_{k'}\),
\(|A_{\phi(\bar k)-1}\cap R_{k'}|=|A_{\phi(\bar k)-1}|=V^*_{\mathfrak{R}}(S_t \cap R_{k^{(t+1,\phi(\bar k)-1)}})=|B|\)
with the previously defined \(B\), in particular
\(|B\cap R_{k'}|\leq \zeta_{k'}\), but given that
\(B\subseteq S_t \cap R_{k^{(t+1,\phi(\bar k)-1)}}\),
\(|B\cap R_{k'}|=|B|\). Wrapping all those equalities, it comes that
\(|A_{\phi(\bar k)-1}\cap R_{k'}|\leq \zeta_{k'}\). In the end,
\(|A_{\phi(\bar k)-1}\cap R_{k'}|\leq \zeta_{k'}\) for all
\(k'\in\mathcal{K}\), so \(A_{\phi(\bar k)-1}\) realizes the maximum in
\eqref{eq_Vstar_au_max} for \(S_t \cap R_{k^{(t+1,\phi(\bar k)-1)}}\).

By applying exactly the same method, we recursively construct a
non-increasing sequence \(A_{\phi(\bar k)}\subseteq\dotsb\subseteq A_1\)
such that for all \(\ell\in\{1,\dotsc, \phi(\bar k)\}\) and
\(k'\in\mathcal{K}\), \(A_\ell\subseteq S_t\cap R_{k^{(t+1,\ell)}}\),
\(V^*_{\mathfrak{R}}(S_t\cap R_{k^{(t+1,\ell)}})=|A_\ell|\), and
\(|A_\ell\cap R_{k'}|\leq \zeta_{k'}\). Furthermore for \(\ell'>\ell\),
\(A_{\ell}\setminus A_{\ell'}\subseteq S_t\cap R_{k^{(t+1,\ell)}}\setminus S_t\cap R_{k^{(t+1,\ell')}}\).
Also note that the fact that
\(i_{t+1}\not\in\bigcup_{\kappa\in\mathcal{K}^-_{t}}R_{\kappa}\) implies
that \(\eta^t_{k^{(t+1,\ell)}}<\zeta_{k^{(t+1,\ell)}}\) for all
\(\ell\in\{1,\dotsc, \phi(\bar k)\}\). So by
\eqref{eq_vstar_inter_Rk_equal_eta},
\(|A_\ell|<\zeta_{k^{(t+1,\ell)}}\).

Let, for any \(\ell\in\{1,\dotsc, \phi(\bar k)\}\),
\(\widetilde A_\ell=A_\ell \cup \{ i_{t+1}\}\). Given that
\(A_\ell\subseteq S_t\cap R_{k^{(t+1,\ell)}}\) and that
\(i_{t+1}\in S_{t+1}\setminus S_t\),
\(\widetilde A_\ell\subseteq S_{t+1}\cap R_{k^{(t+1,\ell)}}\),
\(|\widetilde A_\ell|=| A_\ell|+1\), and for all
\(\ell'\in\{1,\dotsc, \phi(\bar k)\}\),
\(|\widetilde A_\ell \cap R_{k^{(t+1,\ell')}} |=| A_\ell\cap R_{k^{(t+1,\ell')}}|+1\).
Note that if, furthermore, \(\ell\geq \ell'\), then
\(A_\ell\subseteq A_{\ell'}\), so \begin{align*}
|\widetilde A_\ell \cap R_{k^{(t+1,\ell')}} |&=| A_\ell\cap R_{k^{(t+1,\ell')}}|+1\\
&\leq | A_{\ell'}\cap R_{k^{(t+1,\ell')}}|+1\\
&= | A_{\ell'}|+1\\
&<\zeta_{k^{(t+1,\ell')}}+1.\\
\end{align*} On the contrary, if \(\ell< \ell'\), we write that
\begin{align*}
|\widetilde A_\ell \cap R_{k^{(t+1,\ell')}} |&=| A_\ell\cap R_{k^{(t+1,\ell')}}|+1\\
&= | (A_{\ell}\setminus A_{\ell'}) \cap R_{k^{(t+1,\ell')}}   | + | A_{\ell'}\cap R_{k^{(t+1,\ell')}}|+1\\
&<  0 +  \zeta_{k^{(t+1,\ell')}} +1,
\end{align*} because
\(A_{\ell}\setminus A_{\ell'} \subseteq   R_{k^{(t+1,\ell)}}\setminus  R_{k^{(t+1,\ell')}}\)
hence
\((A_{\ell}\setminus A_{\ell'}) \cap R_{k^{(t+1,\ell')}}  =\varnothing\).
In both cases,
\(|\widetilde A_\ell \cap R_{k^{(t+1,\ell')}} |< \zeta_{k^{(t+1,\ell')}} +1\)
so
\(|\widetilde A_\ell \cap R_{k^{(t+1,\ell')}} |\leq \zeta_{k^{(t+1,\ell')}}\).
Additionally, for all \(k'\in\mathcal{K}\) such that
\(i_{t+1}\not\in R_{k'}\),
\(|\widetilde A_\ell \cap R_{k'} |=| A_\ell \cap R_{k'} |\leq \zeta_{k'}\).

In the end, \(|\widetilde A_\ell \cap R_{k'} |\leq \zeta_{k'}\) for all
\(k'\in\mathcal{K}\), so \begin{align*}
V^*_{\mathfrak{R}}( S_{t+1}\cap R_{k^{(t+1,\ell)}})&\geq | \widetilde A_\ell | \text{ by \eqref{eq_Vstar_au_max}}\\
&=  |  A_\ell | +1\\
&= V^*_{\mathfrak{R}}( S_{t}\cap R_{k^{(t+1,\ell)}}) +1.
\end{align*} So, as we wanted,
\(V^*_{\mathfrak{R}}(S_{t+1}\cap R_k)\geq V^*_{\mathfrak{R}}(S_{t}\cap R_k)+1\)
for all \(k\in\mathcal{K}_{t}\) such that \(i_{t+1}\in R_k\) and so for
all such \(k\) that are in \(\mathcal{K}_{t+1}\). So every inequality in
\eqref{eq_ineq_} becomes an equality and we have proven that
\begin{equation*}
V^*_{\mathfrak{R}}(S_{t+1}\cap R_k) = V^*_{\mathfrak{R}}(S_{t}\cap R_k)+1 =\eta^t_k+1= \eta^{t+1}_k,
\end{equation*} that is, \eqref{eq_vstar_inter_Rk_equal_eta} is true at
\(t+1\). Looking at the first line of \eqref{eq_ineq_} , we also proved
that \begin{equation}
V^*_{\mathfrak{R}}(S_{t+1}\cap R_k) =  \sum_{\substack{k'\in\mathcal{P}^{t}\\R_{k'}\subseteq R_k}} \zeta_{k'}\wedge |S_{t+1}\cap R_{k'}| . \label{eq_Pt_instead}
\end{equation} The only thing left to prove is that
\eqref{eq_Pt_instead} is also true with \(\mathcal{P}^{t+1}\) instead of
\(\mathcal{P}^{t}\), that is that \eqref{eq_Pt_good_partition} also
holds at \(t+1\), or, put differently, that
\begin{equation}\label{eq_last_goal}
\sum_{\substack{k'\in\mathcal{P}^{t}\\R_{k'}\subseteq R_k}} \zeta_{k'}\wedge |S_{t+1}\cap R_{k'}|=\sum_{\substack{k'\in\mathcal{P}^{t+1}\\R_{k'}\subseteq R_k}} \zeta_{k'}\wedge |S_{t+1}\cap R_{k'}|.
\end{equation} If \(h^f_{t+1}\) does not exist, meaning that we didn't
break the loop, \(\mathcal{P}^{t+1}=\mathcal{P}^t\) so there is nothing
to prove.

Now assume that \(h^f_{t+1}\) exists. So \eqref{eq_rel_rec_p} holds. We
will split each term in \eqref{eq_last_goal} in a sum of two terms.
First, note that by \eqref{eq_rel_rec_p}, for any \(k'\in\mathcal{K}\)
such that \(R_{k'}\cap  R_{  k^{(t+1,h^f_{t+1})} } = \varnothing\), we
have that \(k'\in\mathcal{P}^{t+1}\) if and only if
\(k'\in \mathcal{P}^{t}\). And so, \begin{align*}
 \sum_{\substack{k'\in\mathcal{P}^{t+1}\\R_{k'}\subseteq R_k}} \zeta_{k'}\wedge |S_{t+1}\cap R_{k'}| &=  \sum_{\substack{k'\in\mathcal{P}^{t+1}\\R_{k'}\cap  R_{  k^{(t+1,h^f_{t+1})} } = \varnothing \\R_{k'}\subseteq R_k }} \zeta_{k'}\wedge |S_{t+1}\cap R_{k'}| +  \zeta_{ k^{(t+1,h^f_{t+1})}}\wedge |S_{t+1}\cap R_{ k^{(t+1,h^f_{t+1})}}|\\ 
 &=  \sum_{\substack{k'\in\mathcal{P}^{t}\\R_{k'}\cap  R_{  k^{(t+1,h^f_{t+1})} } = \varnothing \\R_{k'}\subseteq R_k }} \zeta_{k'}\wedge |S_{t+1}\cap R_{k'}| +  \zeta_{ k^{(t+1,h^f_{t+1})}}\wedge |S_{t+1}\cap R_{ k^{(t+1,h^f_{t+1})}}|.
\end{align*}

Recall that we already proved that there is no \(k'\in \mathcal{P}^t\)
such that \(R_{  k^{(t+1,h^f_{t+1})} }\subsetneq R_{k'}\), so for any
\(k'\in \mathcal{P}^t\), either
\(R_{k'}\cap  R_{  k^{(t+1,h^f_{t+1})} } = \varnothing\) or
\(R_{k'}\subseteq  R_{  k^{(t+1,h^f_{t+1})}}\). Hence the split
\begin{align*}
 \sum_{\substack{k'\in\mathcal{P}^{t}\\R_{k'}\subseteq R_k}} \zeta_{k'}\wedge |S_{t+1}\cap R_{k'}| &=  \sum_{\substack{k'\in\mathcal{P}^{t}\\R_{k'}\cap  R_{  k^{(t+1,h^f_{t+1})} } = \varnothing \\R_{k'}\subseteq R_k }} \zeta_{k'}\wedge |S_{t+1}\cap R_{k'}|  \;   + \sum_{\substack{k'\in\mathcal{P}^{t}\\R_{k'}\subseteq  R_{  k^{(t+1,h^f_{t+1})}}\\ R_{k'}\subseteq R_k}} \zeta_{k'}\wedge |S_{t+1}\cap R_{k'}|\\
 &=\sum_{\substack{k'\in\mathcal{P}^{t}\\R_{k'}\cap  R_{  k^{(t+1,h^f_{t+1})} } = \varnothing \\R_{k'}\subseteq R_k }} \zeta_{k'}\wedge |S_{t+1}\cap R_{k'}|  \;   + \sum_{\substack{k'\in\mathcal{P}^{t}\\R_{k'}\subseteq  R_{  k^{(t+1,h^f_{t+1})}}}} \zeta_{k'}\wedge |S_{t+1}\cap R_{k'}|,
\end{align*} where the last equality comes from the fact that
\(R_{  k^{(t+1,h^f_{t+1})} }\subseteq R_k\), because
\(k\in\mathcal{K}_{t+1}\), \(i_{t+1}\in R_k\), and
\(k^{(t+1,h^f_{t+1})}\in \mathcal{P}^{t+1}\).

Given the two previously made splits, it remains to prove that
\begin{equation*}
\sum_{\substack{k'\in\mathcal{P}^{t}\\R_{k'}\subseteq  R_{  k^{(t+1,h^f_{t+1})}} }} \zeta_{k'}\wedge |S_{t+1}\cap R_{k'}| =  \zeta_{ k^{(t+1,h^f_{t+1})}}\wedge |S_{t+1}\cap R_{ k^{(t+1,h^f_{t+1})}}|.
\end{equation*} Interestingly, this does not depend on \(k\) anymore.

Let us show that \begin{equation}
\eta^{t+1}_{k^{(t+1,h^f_{t+1})} }= \zeta_{ k^{(t+1,h^f_{t+1})}}.
\label{zeta_equal_eta}
\end{equation} Due to \(h^f_{t+1}\) existing, we broke the loop in line
19, so the condition in line 13 was false, so
\(\eta^{t+1}_{k^{(t+1,h^f_{t+1})} }\geq \zeta_{ k^{(t+1,h^f_{t+1})}}\).
We show by recursion over \(t'\in\{0,\dotsc,t\}\) that
\(\eta^{t'}_{k^{(t+1,h^f_{t+1})} }< \zeta_{ k^{(t+1,h^f_{t+1})}}\).
Given that
\(i_{t+1}\not\in\bigcup_{\kappa\in\mathcal{K}^-_{t}}R_{\kappa}\) and
\(i_{t+1}\in R_{k^{(t+1,h^f_{t+1})}}\),
\(k^{(t+1,h^f_{t+1})} \not\in \mathcal{K}^-_{t}\) and in particular
\(k^{(t+1,h^f_{t+1})} \not\in \mathcal{K}^-_{0}\) so
\(\eta^{0}_{k^{(t+1,h^f_{t+1})} }=0< \zeta_{ k^{(t+1,h^f_{t+1})}}\). Now
let \(t'<t\) and assume that
\(\eta^{t'}_{k^{(t+1,h^f_{t+1})} }< \zeta_{ k^{(t+1,h^f_{t+1})}}\). We
distinguish two cases. If \(i_{t'+1}\not\in R_{k^{(t+1,h^f_{t+1})}}\),
\(k^{(t+1,h^f_{t+1})}\) is not visited at step \(t'+1\) of the
algorithm, so
\(\eta^{t'+1}_{k^{(t+1,h^f_{t+1})} }=\eta^{t'}_{k^{(t+1,h^f_{t+1})} }< \zeta_{ k^{(t+1,h^f_{t+1})}}\)
by the recursion hypothesis. If \(i_{t'+1}\in R_{k^{(t+1,h^f_{t+1})}}\),
all \(k^{(t'+1,h)}\), \(h\leq h^f_{t+1}\), are visited, and for all of
them the condition in line 13 is true, otherwise we would have, for some
\(h\leq h^f_{t+1}\),
\(k^{(t'+1,h)}\in\mathcal{K}^-_{t'+1}\subseteq\mathcal{K}^-_{t}\).
Noting that, necessarily, \(k^{(t'+1,h)}=k^{(t+1,h)}\), we would finally
have
\(i_{t+1}\in R_{k^{(t+1,h^f_{t+1})}}\subseteq R_{k^{(t+1,h)}}\subseteq \bigcup_{\kappa\in\mathcal{K}^-_{t}}R_{\kappa}\)
which is a contradiction. This completes the recursion. Specifically for
\(t'=t\),
\(\eta^{t}_{k^{(t+1,h^f_{t+1})} }\leq \zeta_{ k^{(t+1,h^f_{t+1})}}-1\)
and so
\(\eta^{t+1}_{k^{(t+1,h^f_{t+1})} }\leq\eta^{t}_{k^{(t+1,h^f_{t+1})} }+1\leq \zeta_{ k^{(t+1,h^f_{t+1})}}\)
and so \eqref{zeta_equal_eta} holds.

Also note that
\(k^{(t+1,h^f_{t+1})}\in\mathcal{P}^{t+1}\subseteq\mathcal{K}_{t+1}\),
which implies two things. Firstly, because
\eqref{eq_vstar_inter_Rk_equal_eta} holds at \(t+1\),
\(\eta^{t+1}_{k^{(t+1,h^f_{t+1})} }=V^*_{\mathfrak{R}}( S_{t+1}\cap  R_{  k^{(t+1,h^f_{t+1})}  } )\).
Secondly, by applying \eqref{eq_Pt_instead}, we get that
\(\sum_{\substack{k'\in\mathcal{P}^{t}\\R_{k'}\subseteq  R_{  k^{(t+1,h^f_{t+1})}} }} \zeta_{k'}\wedge |S_{t+1}\cap R_{k'}| = V^*_{\mathfrak{R}}( S_{t+1}\cap  R_{  k^{(t+1,h^f_{t+1})}  } )\).
Wrapping all these assertions: \begin{align*}
\sum_{\substack{k'\in\mathcal{P}^{t}\\R_{k'}\subseteq  R_{  k^{(t+1,h^f_{t+1})}} }} \zeta_{k'}\wedge |S_{t+1}\cap R_{k'}| &= V^*_{\mathfrak{R}}( S_{t+1}\cap  R_{  k^{(t+1,h^f_{t+1})}  } )\\
&=V^*_{\mathfrak{R}}( S_{t+1}\cap  R_{  k^{(t+1,h^f_{t+1})}  } )\wedge  |S_{t+1}\cap R_{ k^{(t+1,h^f_{t+1})}}|\\
&=\eta^{t+1}_{k^{(t+1,h^f_{t+1})} }\wedge  |S_{t+1}\cap R_{ k^{(t+1,h^f_{t+1})}}|\\
&=  \zeta_{ k^{(t+1,h^f_{t+1})}}\wedge |S_{t+1}\cap R_{ k^{(t+1,h^f_{t+1})}}| \text{ by \eqref{zeta_equal_eta}},
\end{align*} which achieves the second case and so the proof of
Theorem~\ref{thm-curve-path}.

\subsection{\texorpdfstring{Proof of
Proposition~\ref{prp-cardinals}}{Proof of Proposition~}}\label{sec-cardinals}

Recall that the \(R_k\) are assumed to be all non-empty and distinct by
Remark~\ref{rem-distinct}.

The \(P_n\), \(n\in \mathbb N_N^*\), form a partition of
\(\mathbb{N}_m^*\), so \(N\leq m\).

There is a sequence \(R^{(H)}\subsetneq\dotsb\subsetneq R^{(1)}\). There
exists \(n_H\in\mathbb N_N^*\) such that \(P_{n_H}\subseteq R^{(H)}\)
and for \(1\leq h \leq H-1\), there exists \(n_h\in\mathbb N_N^*\) such
that \(P_{n_h}\subseteq R^{(h)}\setminus R^{(h+1)}\). For \(h_1, h_2\)
such that \(1\leq h_1<h_2 \leq H\), we have
\(P_{n_{h_2}}\subseteq R^{(h_2)}\) and
\(P_{n_{h_1}}\subseteq R^{(h_1)}\setminus R^{(h_1+1)}\subseteq R^{(h_1)}\setminus R^{(h_2)}\)
so \(P_{n_{h_1}}\cap P_{n_{h_2}}=\varnothing\) and in particular
\(n_{h_1}\neq n_{h_2}\), so \(h\mapsto n_h\) is an injection and
\(H\leq N\).

To show that the three bounds can be achieved simultaneously, we let
\(P_i=\{i\}\) for all \(i\in\mathbb{N}_m^*\),
\(\mathcal{K}=\{(i,i), i\in\mathbb{N}_m^*\}\cup\{(1,i), i\in\mathbb{N}_m^*\}\),
and, as usual, for \((i,j)\in\mathcal{K}\),
\(R_{(i,j)}=P_{i:j}=\bigcup_{i\leq n\leq j}P_n\).

Finally we show by induction over \(N\geq1\) that for any family
\((R_k)_{k\in\mathcal{K}}\) with a forest structure and \(N\) leaves,
\(|\mathcal{K}|\leq 2N-1\). For \(N=1\) it is trivial, necessarily
\(P_1=\mathbb{N}_m^*\) and then the only possible set \(R_k\) is also
\(\mathbb{N}_m^*\) so \(|\mathcal{K}|=1=2N-1\). Let \(N\geq 1\). Assume
that for any family \((R_k)_{k\in\mathcal{K}}\) with a forest structure
and \(N\) compatible leaves, \(|\mathcal{K}|\leq 2N-1\). Let
\((R_k)_{k\in\mathcal{K}}\) a forest structure with \(N+1\) leaves, let
\(H\) its maximum depth, let \(P_1, \dotsc P_{N+1}\) the leaves. If
\(H=1\), all the regions are two-by-two disjoint so there is an
injection from the regions to the leaves and so
\(|\mathcal{K}|\leq N+1\leq 2(N+1)-1\). From now on we assume \(H\geq2\)
and we distinguish two cases.

In the first case, assume that there exists \(\hat k\) of depth \(H\),
that is \(\phi(\hat k)=H\), such that \(R_{\hat k}\) is comprised of at
least two leaves: there exist \(\tilde\imath\) and \(\tilde\jmath\) with
\(\tilde\jmath\geq\tilde\imath+1\) such that
\(R_{\hat k}=\bigcup_{\tilde\imath\leq n\leq \tilde\jmath}P_n\). Let
\(\mathcal{K}^-=\mathcal{K}\setminus\{(\tilde\imath, \tilde\imath), (\tilde\imath+1, \tilde\imath+1)\}\),
\((R_k)_{k\in\mathcal{K}^-}\) has also a forest structure, and we show
that
\(P_1,\dotsc,P_{\tilde\imath-1}, P_{\tilde\imath}\cup P_{\tilde\imath+1},P_{\tilde\imath+2},\dotsc,P_{N+1}\)
is a sequence of \(N\) leaves that are compatible with this family.
First note that they well define a partition of \(\mathbb{N}_m^*\). Let
\(k\in\mathcal{K}^-\), we just have to prove that if
\(P_{\tilde\imath}\subseteq R_k\) or
\(P_{\tilde\imath+1}\subseteq R_k\), then
\(P_{\tilde\imath}\cup P_{\tilde\imath+1}\subseteq R_k\). If that's the
case, then \(R_k\cap R_{\hat k}\neq\varnothing\), and by the forest
structure property of \(\mathcal{K}\), \(R_k\subsetneq R_{\hat k}\) or
\(R_{\hat k}\subseteq R_k\), but actually \(R_k\subsetneq R_{\hat k}\)
is impossible because \(\phi(\hat k)=H\) which is the maximal depth. So
\(R_{\hat k}\subseteq R_k\), noticing that
\(P_{\tilde\imath}\cup P_{\tilde\imath+1}\subseteq R_{\hat k}\), we get
the desired result.

On the contrary, in the second case, assume that for all
\(k\in\mathcal{K}\) of height \(H\), \(R_k\) is a leaf. Let
\(\hat k\in\mathcal{K}\) of depth \(H\), that is \(\phi(\hat k)=H\), and
let \(\tilde k\in\mathcal{K}\) the element of depth \(H-1\) such that
\(R_{\hat k}\subsetneq R_{\tilde k}\). \(\tilde k\) exists because
\(H\geq2\). Identify \(\tilde k\) to
\((\tilde\imath, \tilde\jmath)\in \left(\mathbb N_N^* \right)^2\) such
that \(R_{\tilde k}=\bigcup_{\tilde\imath\leq n\leq \tilde\jmath}P_n\).
If \(\tilde\jmath=\tilde\imath\), then
\(R_{\tilde k}=P_{\tilde\imath}\), then we have also
\(R_{\hat k}=P_{\tilde\imath}\), and there is a contradiction with the
fact that \(R_{\hat k}\subsetneq R_{\tilde k}\). So
\(\tilde\jmath\geq\tilde\imath+1\). Let again
\(\mathcal{K}^-=\mathcal{K}\setminus\{(\tilde\imath, \tilde\imath), (\tilde\imath+1, \tilde\imath+1)\}\),
and let us show again that
\(P_1,\dotsc,P_{\tilde\imath-1}, P_{\tilde\imath}\cup P_{\tilde\imath+1},P_{\tilde\imath+2},\dotsc,P_{N+1}\)
is a sequence of \(N\) leaves that are compatible with
\((R_k)_{k\in\mathcal{K}^-}\). The reasoning is the same as in the first
case, but working with \(\tilde k\) instead of \(\hat k\). Let
\(k\in\mathcal{K}^-\), such that \(P_{\tilde\imath}\subseteq R_k\) or
\(P_{\tilde\imath+1}\subseteq R_k\). Then
\(R_k\cap R_{\tilde k}\neq\varnothing\), and by the forest structure
property of \(\mathcal{K}\), \(R_k\subsetneq R_{\tilde k}\) or
\(R_{\tilde k}\subseteq R_k\). But actually
\(R_k\subsetneq R_{\tilde k}\) is impossible, because this implies that
\(\phi(k)=H\), so \(R_k\) is a leaf, so necessarily
\(R_k=P_{\tilde\imath}\) or \(R_k=P_{\tilde\imath+1}\), but
\(k\in\mathcal{K}^-\) so \(k\neq(\tilde\imath, \tilde\imath)\) and
\(k\neq(\tilde\imath+1, \tilde\imath+1)\), hence a contradiction. So
\(R_{\tilde k}\subseteq R_k\), noticing that
\(P_{\tilde\imath}\cup P_{\tilde\imath+1}\subseteq R_{\tilde k}\), we
get the desired result.

In both cases, we have constructed a forest structure
\((R_k)_{k\in\mathcal{K}^-}\) with \(N\) compatible leaves. By the
induction hypothesis, \(|\mathcal{K}^-|\leq 2N-1\) and so
\(|\mathcal{K}|\leq|\mathcal{K}^-|+2\leq 2(N+1)-1\) which concludes.

A direct, alternative proof that \(|\mathcal{K}|\leq 2m-1\) is given in
next section.

\subsubsection{\texorpdfstring{Direct proof that
\(|\mathcal{K}|\leq 2m-1\)}{Direct proof that \textbar\textbackslash mathcal\{K\}\textbar\textbackslash leq 2m-1}}\label{direct-proof-that-mathcalkleq-2m-1}

We show by induction on \(m\geq1\) that, for a family of subsets
\((R_k)_{k\in\mathcal{K}}\) with a forest structure such that the
\(R_k\) are all non-empty and distinct, \(|\mathcal{K}|\leq 2m-1\). For
\(m=1\) it is trivial, the only subset possible is \(\{1\}\). Now let
\(m\geq 1\) and assume that the result is true for \(m\). Let
\((R_k)_{k\in\mathcal{K}}\) a family of non-empty and distinct subsets
of \(\mathbb N_{m+1}^*\) with a forest structure.

Let \(k_1, \dotsc, k_D\), \(D\leq H\), the indices of the regions
including \(m+1\) (possibly non-existent, in which case \(D=0\)),
ordered such that \(R_{k_1}\subsetneq\dotsb\subsetneq R_{k_D}\). Let
\(\widetilde{\mathcal{K}}=\mathcal{K}\setminus\{k_1, \dotsc, k_D\}\),
and let
\(\mathcal{K}'=\mathcal{K}\setminus\{k_1, k_2\}=\widetilde{\mathcal{K}}\cup\{k_3, \dotsc, k_D\}\).
For \(k\in \widetilde{\mathcal{K}}\), we let \(R'_k=R_k\), and for
\(k\in\{k_3, \dotsc, k_D\}\), we let \(R'_k=R_k\setminus\{m+1\}\). The
rest of the proof consists in proving that \((R'_k)_{k\in\mathcal{K}'}\)
is a family of non-empty and distinct subsets of \(\mathbb{N}_m^*\) with
a forest structure. Once this is proven, by induction hypothesis we will
have \(|\mathcal{K}'|\leq 2m-1\), and finally
\(|\mathcal{K}|\leq |\mathcal{K}'|+2\leq 2m-1+2=2(m+1)-1\).

First, any \(R'_k\), \(k\in\mathcal{K}'\), is non-empty, because if
\(k\in\widetilde{\mathcal{K}}\), \(R_k'=R_k\neq\varnothing\), and if
\(k=k_d\) with \(d\geq3\), \(R_{k_1}\subsetneq R_{k_2}\subsetneq R_k\)
so \(|R_k|\geq 3\) and then \(|R'_k|=|R_k\setminus\{m+1\}|\geq2\).

To prove that \((R'_k)_{k\in\mathcal{K}'}\) is a family of distinct
subsets of \(\mathbb{N}_m^*\) with a forest structure, we need to take
\(k, k'\in\mathcal{K}'\), \(k\neq k'\), and show that
\(R'_k\neq R'_{k'}\) and
\(R'_k\cap R'_{k'}\in\{\varnothing, R'_k, R'_{k'}\}\).

If \(|\widetilde{\mathcal{K}}|\geq2\), let
\(k, k'\in\widetilde{\mathcal{K}}\), \(k\neq k'\). We have \(R'_k=R_k\)
and \(R'_{k'}=R_{k'}\), so \(R'_k\neq R'_{k'}\) and
\(R'_k\cap R'_{k'}\in\{\varnothing, R_k, R_{k'}\}=\{\varnothing, R'_k, R'_{k'}\}\).

If \(D\geq 4\), let \(i, j\in\{3,\dotsc, D\}\), \(i<j\). We have
\(R'_{k_i}=R_{k_i}\setminus\{m+1\}\) and
\(R'_{k_j}=R_{k_j}\setminus\{m+1\}\) with \(R_{k_i}\subsetneq R_{k_j}\),
so \(R'_{k_i}\neq R'_{k_j}\) and
\(R'_{k_i}\cap R'_{k_j}=R_{k_i}\setminus\{m+1\}=R'_{k_i}\).

If \(D\geq 3\) and \(|\widetilde{\mathcal{K}}|\geq1\), let
\(i\in\{3,\dotsc, D\}\) and \(k\in\widetilde{\mathcal{K}}\). We have
\(R'_{k_i}=R_{k_i}\setminus\{m+1\}\) and \(R'_k=R_k\). \begin{align*}
R'_{k_i}\cap R'_k&=(R_{k_i}\setminus\{m+1\})\cap R_k \\
&=R_{k_i}\cap R_k \text{ because } m+1\not\in R_k\\
&\in\{\varnothing, R_{k_i}, R_k\} \text{ by the property of forest structure}
\end{align*} Given that \(R'_{k_i}\subsetneq R_{k_i}\),
\(R'_{k_i}\cap R'_k=R_{k_i}\) is impossible, so
\(R'_{k_i}\cap R'_k\in\{\varnothing, R_k\}=\{\varnothing, R'_k\}\) so
the only thing remaining to prove is that \(R'_{k_i}\) and \(R'_k\) are
distinct. We prove that by showing that if \(R'_k = R'_{k_i}\), there is
a contradiction. Indeed, then \(R_k = R_{k_i}\setminus\{m+1\}\), and we
can study \(R_{k_2}\cap R_k\). On the one hand,
\(R_{k_2}\cap R_k\in\{\varnothing, R_{k_2}, R_k\}=\{\varnothing, R_{k_2}, R_{k_i}\setminus\{m+1\}\}\)
by forest structure. On the other hand, \begin{align*}
R_{k_2}\cap R_k&=(R_{k_2}\setminus\{m+1\})\cap R_k \text{ because }m+1\not\in R_k \\
&=(R_{k_2}\setminus\{m+1\})\cap (R_{k_i}\setminus\{m+1\})\\
&=R_{k_2}\setminus\{m+1\}.
\end{align*} So \(R_{k_2}\cap R_k=R_{k_2}\) is impossible. Furthermore,
\(R_{k_2}\cap R_k=R_{k}\) is also impossible because
\(R_{k_2}\subsetneq R_{k_i}\) and \(m+1\in R_{k_2}\) hence
\(R_{k_2}\setminus\{m+1\}\subsetneq R_{k_i}\setminus\{m+1\}\). So
\(R_{k_2}\cap R_k=\varnothing\), that is
\(R_{k_2}\setminus\{m+1\}=\varnothing\), so \(R_{k_2}=\{m+1\}\) and the
contradiction is the following:
\(\{m+1\}\subseteq R_{k_1}\subsetneq R_{k_2}=\{m+1\}\).

\section*{Acknowledgements}\label{acknowledgements}
\addcontentsline{toc}{section}{Acknowledgements}

This work has been supported by the research grants ANR-20-IDEES-0002
(PIA), ANR-19-CHIA-0021 (BISCOTTE), ANR-23-CE40-0018 (BACKUP) and
ANR-21-CE23-0035 (ASCAI). Thanks to Romain Périer for being the first to
extensively use the new implemented algorithms. Thanks to Pierre Neuvial
for his valuable feedback. Thanks to the three anonymous reviewers whose
suggestions and comments greatly improved this manuscript.

\section*{References}\label{references}
\addcontentsline{toc}{section}{References}

\renewcommand{\bibsection}{}
\bibliography{algo-curve.bib}

\section*{Session information}\label{session-information}
\addcontentsline{toc}{section}{Session information}

\begin{verbatim}
R version 4.5.0 (2025-04-11)
Platform: aarch64-apple-darwin24.4.0
Running under: macOS Sequoia 15.7.4

Matrix products: default
BLAS:   /opt/homebrew/Cellar/openblas/0.3.29/lib/libopenblasp-r0.3.29.dylib 
LAPACK: /opt/homebrew/Cellar/r/4.5.0_1/lib/R/lib/libRlapack.dylib;  LAPACK version 3.12.1

locale:
[1] en_US.UTF-8/en_US.UTF-8/en_US.UTF-8/C/en_US.UTF-8/en_US.UTF-8

time zone: Europe/Paris
tzcode source: internal

attached base packages:
[1] stats     graphics  grDevices utils     datasets  methods   base     

other attached packages:
[1] sanssouci_0.14.2     microbenchmark_1.5.0

loaded via a namespace (and not attached):
 [1] digest_0.6.37     fastmap_1.2.0     xfun_0.52         Matrix_1.7-3     
 [5] lattice_0.22-6    matrixStats_1.5.0 knitr_1.50        htmltools_0.5.8.1
 [9] generics_0.1.4    rmarkdown_2.29    tinytex_0.57      cli_3.6.5        
[13] grid_4.5.0        matrixTests_0.2.3 compiler_4.5.0    rstudioapi_0.17.1
[17] tools_4.5.0       evaluate_1.0.3    Rcpp_1.0.14       yaml_2.3.10      
[21] rlang_1.1.6       jsonlite_2.0.0   
\end{verbatim}

\end{document}